\documentclass[11pt,reqno]{amsart}
\usepackage[T1]{fontenc}
\usepackage[utf8]{inputenc}
\usepackage{lmodern}
\usepackage{amsmath,amssymb,amsthm,mathtools}
\usepackage[a4paper,margin=1.12in]{geometry}
\usepackage{comment}

\usepackage{tikz}
\usepackage[dvipsnames]{xcolor}
\newlength{\PlotWidth}
\usetikzlibrary{arrows.meta}
\usepackage{microtype}
\usepackage{aliascnt}
\usepackage[hidelinks]{hyperref}
\usepackage[nameinlink,noabbrev]{cleveref}
\hypersetup{
  pdftitle={Temporal central limit theorem for mixed-radix odometer discrepancy},
  pdfauthor={Maria Avdeeva and Francesco Cellarosi}
}

\theoremstyle{plain}
\newtheorem{theorem}{Theorem}[section]
\newaliascnt{proposition}{theorem}
\newtheorem{proposition}[proposition]{Proposition}
\aliascntresetthe{proposition}
\newaliascnt{lemma}{theorem}
\newtheorem{lemma}[lemma]{Lemma}
\aliascntresetthe{lemma}
\newaliascnt{corollary}{theorem}
\newtheorem{corollary}[corollary]{Corollary}
\aliascntresetthe{corollary}
\theoremstyle{definition}
\newaliascnt{definition}{theorem}

\aliascntresetthe{definition}
\newaliascnt{example}{theorem}
\newtheorem{example}[example]{Example}
\aliascntresetthe{example}
\theoremstyle{remark}
\newaliascnt{remark}{theorem}
\newtheorem{remark}[remark]{Remark}
\aliascntresetthe{remark}

\crefname{theorem}{theorem}{theorems}
\Crefname{theorem}{Theorem}{Theorems}
\crefname{proposition}{proposition}{propositions}
\Crefname{proposition}{Proposition}{Propositions}
\crefname{lemma}{lemma}{lemmas}
\Crefname{lemma}{Lemma}{Lemmas}
\crefname{corollary}{corollary}{corollaries}
\Crefname{corollary}{Corollary}{Corollaries}
\crefname{remark}{remark}{remarks}
\Crefname{remark}{Remark}{Remarks}
\crefname{example}{example}{examples}
\Crefname{example}{Example}{Examples}

\newcommand{\R}{\mathbb R}
\newcommand{\E}{\mathbb E}
\newcommand{\Pp}{\mathbb P}
\newcommand{\Var}{\operatorname{Var}}
\newcommand{\Cov}{\operatorname{Cov}}
\newcommand{\1}{\mathbf 1}
\newcommand{\cD}{\mathcal D}
\newcommand{\cF}{\mathcal F}
\newcommand{\cG}{\mathcal G}
\newcommand{\Normal}{\mathcal N}
\newcommand{\dK}{d_{\mathrm{KS}}}
\newcommand{\TV}{d_{\mathrm{TV}}}

\newcommand{\eps}{\varepsilon}

\title{Temporal central limit theorem for mixed-radix odometer discrepancy}
\author{Maria Avdeeva}
\address[M. Avdeeva]{Center for Computational Biology, Flatiron Institute, Simons Foundation,
New York, NY 10010, USA.}
\email{mavdeeva@flatironinstitute.org}
\author{Francesco Cellarosi}
\address[F. Cellarosi]{Department of Mathematics and Statistics, Queen's University. 48 University Ave.
Jeffery Hall
Kingston, ON K7L 3N6, Canada.}
\email{francesco.cellarosi@queensu.ca}
\date{}

\keywords{Temporal central limit theorem, mixed-radix odometers,
local discrepancy, van der Corput sequences, quantitative normal
approximation, reverse martingales}

\subjclass[2020]{11K38, 60F05, 37A50, 37A44.}

\begin{document}
\begin{abstract}
    We consider  
    the local discrepancy for the number of visits of the orbit of $0$ under a  mixed-radix odometer to intervals near the origin. Both the length of the orbit and the endpoint of the visited interval are chosen independently and uniformly at random. 
    Under a Feller-type condition 
    and an explicit condition ensuring that the finite random-time clock can be approximated by a Haar clock on the fluctuation scale, we obtain a temporal central limit theorem for the discrepancy, with an explicit Kolmogorov–Smirnov rate of convergence. This extends and strengthens the fixed-base temporal central limit theorem of Levin and Merzbach to the mixed-radix setting.
\end{abstract}

\maketitle

\section{Introduction and main results}\label{sec:intro}
Fix an integer sequence $\mathbf{b}=(b_1, b_2, b_3,\ldots)$ with $b_j\geq2$.
Consider the compact abelian group 
$G= \prod_{j\geq1}\mathbb Z/b_j\mathbb Z$, whose elements are of the form $g=(g_1,g_2,g_3,\ldots)$ with $g_j\in\{0,1,\ldots,b_j-1\}$.
 Let $T$ be the \emph{generalized von Neumann - Kakutani adding machine} (or simply \emph{odometer}) given by the addition by $(1,0,0,\ldots)$ with carry-over on $G$, which preserves the Haar measure.
  
 The map $T$ can be realized up to measurable isomorphism as a cutting-and-stacking transformation on the interval $[0,1)$, preserving the Lebesgue measure, as follows. The interval $[0,1)$ is cut into $b_1$ equal intervals $\left[\frac{g_1}{b_1},\frac{g_1+1}{b_1}\right)$, where $0\leq g_1\leq b_1-1$, which are labelled by $g_1$.  The $g_1$-th interval is then stacked on top of the $(g_1-1)$-st interval, forming a tower consisting of $b_1$ floors. The tower is then cut into $b_2$ equal pieces which are labelled by $g_2\in\{0,\ldots,b_2-1\}$. The new labels are appended to the previous labels. Each piece of the tower is then stacked onto the one on its left, thus forming a new tower of $b_1b_2$ floors. The procedure of cutting, labelling, and stacking continues indefinitely, by cutting the tower built at the $j$-th step (and consisting of $b_1b_2\cdots b_{j}$ floors) into $b_{j+1}$ pieces, and so on. At any finite stage of the cutting-and-stacking construction, the map $T$ is defined on the tower except at the last floor: one  iteration of the map $T$ consists of moving upward by one floor. The map $T$ can be written as a piecewise isometry of the interval, with countably many branches, where the jumps occur at the points $1-\frac{1}{b_1\cdots b_j}$, $j\geq1$. The action of $T$ on the labels of each sub-interval is precisely the addition of $(1,0,0,\ldots)$ with carry-over  on $G$. See Figure \ref{fig:cutting-stacking} for an example with $\mathbf{b}=(2,3,4,\ldots)$. Every real number $x\in[0,1)$ corresponds to a unique infinite sequence of labels $(g_1,g_2,g_3,\ldots)$, coming from the sequence of nested intervals from the above construction, shrinking down toward $\{x\}$. If we define \begin{align}
B_0=1,\hspace{.3cm} B_j:=\prod_{\ell=1}^j b_\ell\quad(j\geq1),\label{eq:B-def}
\end{align}
then we write $x\in[0,1)$ as
\begin{align}
x=\sum_{j=1}^\infty \frac{g_j}{B_j},\hspace{.3cm}g_j\in\{0,1,\ldots, b_j-1\}.\label{B-expansion-real-x}
\end{align}

At points having two such expansions, we use the one selected by the left-closed, right-open intervals in the cutting-and-stacking construction. Namely, 
we exclude expansions whose digits are eventually maximal (i.e. for which $g_j=b_j-1$ for every sufficiently large $j$). We  refer to \eqref{B-expansion-real-x} as the \emph{mixed-radix} (or \emph{Cantor}) \emph{expansion of $x\in[0,1)$}. Thus, the interval map $T$ acts on points \eqref{B-expansion-real-x} by addition with carry-over of $(1,0,0,\ldots)$ to their mixed-radix digits. The the odometer is minimal and uniquely ergodic, with Haar measure as its unique invariant probability measure; see \cite[Sec.~1]{Downarowicz}. 
 The case $\mathbf{b}=(q, q, q, \ldots)$ corresponds to the usual base-$q$ expansion of $x$ (with the same convention of excluding expansions that are eventually equal to $q-1$) and the transformation $T$ is the classical von Neumann--Kakutani $q$-adic adding machine.

\begin{figure}[htbp]
  \centering
 \begin{tikzpicture}[
    >={Stealth[length=1.7mm,width=1.5mm]},
    font=\tiny,
    cut/.style={dash pattern=on 1.8pt off 1.8pt, line width=0.35pt},
    map/.style={->, dash pattern=on .8pt off 0.8pt, line width=0.3pt}
  ]
 
  \pgfmathsetmacro{\W}{14}       
  \pgfmathsetmacro{\h}{0.20}    
  \pgfmathsetmacro{\laby}{-0.15}
  \pgfmathsetmacro{\pB}{0.65}   
  \pgfmathsetmacro{\pC}{0.65}   
 
 
  \begin{scope}
    \shade[left color=black, right color=white] (0,0) rectangle (\W,\h);
    \draw[cut] (0.5*\W, \h+0.38) -- (0.5*\W, \laby-0.16);
    \node at (0.25*\W, \laby) {0};
    \node at (0.75*\W, \laby) {1};
    \draw[map] (0.75*\W, \h)
      .. controls +(125:1.7) and +(75:1.2) .. (0.25*\W, \h);
  \end{scope}
 
  \begin{scope}[shift={(\W/4,-2.5)}]
    \pgfmathsetmacro{\w}{\W/2}   
    \pgfmathsetmacro{\c}{\w/3}   
    \foreach \lvl/\gl/\gr in {0/100/50, 1/50/0} {
      \shade[left color=black!\gl, right color=black!\gr]
        (0, \lvl*\pB) rectangle (\w, \lvl*\pB+\h);
      \foreach \j in {0,1,2}
        \node at ({(\j+0.5)*\c}, {\lvl*\pB+\laby}) {\lvl\j};
    }
    \foreach \j in {1,2}
      \draw[cut] (\j*\c, \pB+\h+0.15) -- (\j*\c, \laby-0.16);
    \foreach \j in {1,2}
      \draw[map] ({(\j+0.5)*\c}, \pB+\h)
        .. controls +(85:1.0) and +(95:1.0) .. ({(\j-0.5)*\c+.1}, \pB+\h);
  \end{scope}
 
  \begin{scope}[shift={(\W/6+\W/4,-7.75)}]
    \pgfmathsetmacro{\w}{\W/6}   
    \pgfmathsetmacro{\c}{\w/4}   
    \foreach \lvl/\pref/\s in {0/00/0, 1/10/3, 2/01/1, 3/11/4, 4/02/2, 5/12/5} {
      \pgfmathsetmacro{\gl}{100*(1-\s/6)}
      \pgfmathsetmacro{\gr}{100*(1-(\s+1)/6)}
      \shade[left color=black!\gl, right color=black!\gr]
        (0, \lvl*\pC) rectangle (\w, \lvl*\pC+\h);
      \foreach \j in {0,...,3}
        \node at ({(\j+0.5)*\c}, {\lvl*\pC+\laby}) {\pref\j};
    }
    \foreach \j in {1,2,3}
      \draw[cut] (\j*\c, 5*\pC+\h+0.05) -- (\j*\c, \laby-0.16);
    \foreach \j in {1,2,3}
      \draw[map] ({(\j+0.3)*\c}, 5*\pC+\h)
        .. controls +(90:1.05) and +(90:1.05) .. ({(\j-0.5)*\c+.05}, 5*\pC+\h);
    \draw[|->, line width=0.5pt] (\w+0.2, 0.65) -- ++(0,0.65)
      node[right, pos=0.5, font=\scriptsize] {$T$};
  \end{scope}
\end{tikzpicture}
\par\vspace{.3cm}
\begin{tikzpicture}[
  x=\PlotWidth,
  y=\PlotWidth,
  font=\sffamily,
  guide/.style={
    thin,
    dash pattern=on 1pt off 4pt
  },
  branch/.style={
    black,
    line width=1.8pt,
    line cap=butt
  }
]

  \draw[guide] (0,{1/2}) -- ({5/6},{1/2});
  \draw[guide] (0,{1/6}) -- ({23/24},{1/6});
  \draw[guide] (0,{1/24}) -- ({23/24},{1/24});

  \draw[guide] ({1/2},0) -- ({1/2},1);
  \draw[guide] ({5/6},0) -- ({5/6},{1/2});
  \draw[guide] ({23/24},0) -- ({23/24},{1/6});

  \draw[branch] (0,{1/2}) -- ({1/2},1);
  \draw[branch] ({1/2},{1/6}) -- ({5/6},{1/2});
  \draw[branch] ({5/6},{1/24}) -- ({23/24},{1/6});

  \draw[thin] (0,0) rectangle (1,1);

  \node[below=2pt] at (0,0) {$0$};
  \node[below=2pt] at ({1/2},0) {$\tfrac{1}{2}$};
  \node[below=2pt] at ({5/6},0) {$\tfrac{5}{6}$};
  \node[below=2pt] at ({23/24},0) {$\tfrac{23}{24}$};
  \node[below=2pt,xshift=4pt] at (1,0) {$1$};

  \node[left=2pt] at (0,0) {$0$};
  \node[left=2pt] at (0,{1/24}) {{\footnotesize $1/24$}};
  \node[left=2pt] at (0,{1/6}) {{\footnotesize$1/6$}};
  \node[left=2pt] at (0,{1/2}) {{\footnotesize$1/2$}};
  \node[left=2pt] at (0,1) {$1$};

  \node[below=16pt] at ({1/2},-.02) {$x$};
  \node[rotate=90,above=19pt] at (0,{1/2}) {$T(x)$};
\end{tikzpicture}
 \caption{Top: Illustration of the first steps of the cutting-and-stacking
  procedure for $b_1=2$, $b_2=3$, and $b_3=4$. These first three steps
  define the map $T$ everywhere except on the interval
  $[\tfrac{23}{24},1)$ (which becomes the top floor when the four
sub-towers are stacked), where its action is determined by the
  subsequent cutting and stacking corresponding to $b_4,b_5,\ldots$.
  The label of each subinterval gives the initial digits of the
  mixed-radix expansion \eqref{B-expansion-real-x} shared by all points in that subinterval.
  Bottom: the (partial) graph of $T$ corresponding to the above cutting and stacking. }
  \label{fig:cutting-stacking}
\end{figure}

Analogously to \eqref{B-expansion-real-x}, every nonnegative integer $n$ has a unique mixed-radix expansion
\begin{equation}\label{digits_of_integers}
 n=\sum_{i\geq0}c_i(n)B_i,
 \qquad  c_i(n)\in\{0,1,\ldots, b_{i+1}-1\}.
\end{equation}
The orbit of the origin in the interval model of $T$ is then given explicitly in terms of \eqref{digits_of_integers} by
\begin{equation}\label{eq:vdc}
 x_n:=T^n0=\sum_{i\geq0}\frac{c_i(n)}{B_{i+1}}.
\end{equation}
The rational sequence \eqref{eq:vdc} generalizes the classical van der
Corput sequence to mixed-radix expansions. For instance, when $\mathbf{b}=(2,3,4,5,\ldots)$, we have $(x_n)_{n\ge0}=(0,\frac{1}{2},\frac{1}{6},\frac{2}{3},\frac{1}{3},\frac{5}{6},\frac{1}{24},\frac{13}{24},\frac{5}{24},$ $\frac{17}{24},\frac{3}{8},\frac{7}{8},\frac{1}{12},\frac{7}{12},\frac{1}{4},\frac{3}{4},\frac{5}{12},\frac{11}{12},\frac{1}{8},\frac{5}{8},\frac{7}{24},\ldots)$.
Our goal is to study the statistics of visits of the sequence $(x_n)_{n\geq0}$ to an interval of the form $[0,y)$. To this end, for $N\geq0$ and $y\in[0,1)$, define the local discrepancy
\begin{equation}\label{eq:D-N}
 D_N(y):=\sum_{n=0}^{N-1}\bigl(\1_{[0,y)}(x_n)-y\bigr),
 \qquad D_0(y)=0.
\end{equation}
Let $Y,Z$ be independent uniform random variables on $[0,1)$. For an integer $H\geq1$, called the \emph{(deterministic) time horizon}, define the random variables
\begin{equation}\label{eq:random-statistic}
 N_H:=\lfloor HZ\rfloor,\qquad \cD_H:=D_{N_H}(Y).
\end{equation}
Thus $N_H$ is a uniformly sampled time in $\{0,\ldots,H-1\}$ and is independent of $Y$.  We subsequently refer to $N_H$ simply as the \emph{random time}.  We call $c(N_H):=(c_i(N_H))_{i\geq0}$ its \emph{clock string}.
Our main results concern the distribution of $\cD_H$ as $H\to\infty$.
The mixed-radix expansions \eqref{B-expansion-real-x} and \eqref{digits_of_integers} of $Y$ and $N_H$, respectively, will be a crucial tool in our analysis. 
We stress that in the local discrepancy $\cD_H$ the random time $N_H$ and the endpoint $Y$ of the interval $[0,Y)$ are jointly randomised.
For a real random variable $X$, we write its Kolmogorov-Smirnov distance from a standard normal $\Normal(0,1)$ as
\begin{equation}\label{eq:v71-auto-0001}
 \dK(X,\Normal(0,1))
 :=\sup_{t\in\R}|\Pp(X\leq t)-\Phi(t)|
\end{equation}
where $\Phi(t)=\frac{1}{\sqrt{2\pi}}\int_{-\infty}^t e^{-u^2/2}\,\mathrm{d} u$. Since the law of $\Normal(0,1)$ has no atoms, convergence to $\Normal(0,1)$ in the metric $\dK$ is equivalent to convergence in law.

In order to state our main theorem, we need to introduce several deterministic sequences. For $m\geq1$ we set
\begin{equation}\label{eq:SQDelta}
 S_m:=\sum_{j=1}^m b_j^2,
 \qquad Q_m:=\sum_{j=1}^m b_j^4,
 \qquad \Delta_m:=\frac{Q_m}{S_m^2}.
\end{equation}
Moreover, let
\begin{align}
\mu(b):=\frac{b^2-1}{12b},\qquad \tau^2(b):=\frac{3b^4+10b^2-13}{720b^2},\label{eq:mu_and_tau}
\end{align}
and, for $m\geq1$,
\begin{align}
 M_m:=\sum_{j=1}^m\mu(b_j),\qquad T_m:=\sum_{j=1}^m\tau^2(b_j),\qquad V_m:=T_m+\frac{1-B_m^{-1}}{12}.\label{eq:M_and_T_and_V}
\end{align}

Also, for an integer $r\geq1$ we consider the remainder $s$ of the division of $H$ by $B_r$, i.e. we  write $H= k B_r+s$ with $0\leq s<B_r$, and use it to define\footnote{The definition \eqref{eq:epsilon-time} is motivated probabilistically via couplings in \Cref{rem:maximal-coupling}.}
\begin{equation}\label{eq:epsilon-time}
 \eps_{H,r}:=\frac{s(B_r-s)}{H B_r}.
\end{equation} Then, for $p\geq1$ and integers $1\le r\le m$, we use \eqref{eq:epsilon-time} to set
\begin{equation}\label{eq:rho-p}
 \rho_{H,r,m}^{(p)}:=
 \frac{1+\sum_{j=r}^m b_j+
       \bigl(\sum_{j=1}^m b_j\bigr)\eps_{H,r}^{1/p}}
      {\sqrt{S_m}}.
\end{equation}

For $H\geq2$, let $m(H):=\min\{r\geq1:H\leq B_r\}$
so that 
\begin{equation}\label{H-sandwiched}
B_{m(H)-1}<H\leq B_{m(H)}.
\end{equation}
Note that $H\to\infty$ if and only if $m(H)\to\infty$. 
Our main result is the following temporal central limit theorem with explicit rates.
\begin{theorem}[Quantitative temporal CLT]\label{thm:main}
There is an absolute constant $C$ such that, for every $H\geq2$ and every
$1\leq r\leq m:=m(H)$, we have
\begin{align}
 |\E[\cD_H]-M_m|
 &\leq C\sqrt{S_m}\,\rho_{H,r,m}^{(2)},\label{eq:mean-bound}\\
 \bigl|\sqrt{\Var(\cD_H)}-\sqrt{V_m}\bigr|
 &\leq C\sqrt{S_m}\,\rho_{H,r,m}^{(2)},\label{eq:sd-bound}\\
 \dK\left(\frac{\cD_H-M_m}{\sqrt{V_m}}\:,\:\Normal(0,1)\right)
 &\leq C\left(\Delta_m^{1/5}
       +\bigl(\rho_{H,r,m}^{(2)}\bigr)^{2/3}\right).
 \label{eq:main-rate}
\end{align}
Consequently, along any sequence $H\to\infty$ for which
\begin{equation}\label{eq:main-conditions}
 \Delta_{m(H)}\to0
 \quad\text{and}\quad
 \inf_{1\leq r\leq m(H)}\rho_{H,r,m(H)}^{(2)}\to0,
\end{equation}
we have, with $m=m(H)$,
\begin{align}
 \E[\cD_H]&=M_m+o(\sqrt{S_m}),\label{eq:main-mean}\\
 \Var(\cD_H)&=V_m+o(S_m),\label{eq:main-var}\\
 \frac{\cD_H-M_m}{\sqrt{V_m}}
 &\Longrightarrow\Normal(0,1).\label{eq:main-explicit}
\end{align}
\end{theorem}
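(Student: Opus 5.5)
The plan is to reduce $\cD_H$ to a sum of digit-level contributions, prove the CLT for an idealised ``Haar clock'', and then transfer the result to the actual random time $N_H$ by a coupling.

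\textbf{Step 1: digit-wise decomposition.} Write both $Y$ and $N$ in their mixed-radix expansions \eqref{B-expansion-real-x} and \eqref{digits_of_integers}. I would derive an exact identity
\[
D_N(y)=\sum_{j\geq1}\psi_j\bigl(c_{j-1}(N),\,\lfloor N/B_j\rfloor,\,y\bigr),
\]
together with an explicit boundary term. The structure behind it is that among $\{x_n\}_{n<N}$ the points whose first $j$ digits are fixed form translates of a coarser van der Corput set. Counting visits to $[0,y)$ level by level, each level $j$ contributes a sawtooth-type function of $c_{j-1}(N)$ and of the digit $g_j(y)$ and the fractional tail of $y$. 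Each such term is bounded by $O(b_j)$, and only the levels $j\leq m$ are relevant. A direct computation should show that, when $c_{j-1}(N)$ and $g_j(Y)$ are independent and uniform and the tail of $Y$ is uniform, the level-$j$ term has mean $\mu(b_j)$ and variance $\tau^2(b_j)$. The residual term, coming from the top level $B_m$, should contribute the variance $(1-B_m^{-1})/12$. These computations are what identify $M_m$ and $V_m$ in \eqref{eq:M_and_T_and_V}.

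\textbf{Step 2: the Haar clock.} Replace $N_H$ by $N^*$, uniform on $\{0,\dots,B_m-1\}$. Under this law the digits $c_0(N^*),\dots,c_{m-1}(N^*)$ are i.i.d.\ uniform and independent of the digits of $Y$, so $\cD^*:=D_{N^*}(Y)$ is a sum of $m$ terms. The terms are not fully independent, since the fractional tail of $Y$ is shared across levels. I would order the levels from the top down and centre each term conditionally on the coarser levels, which makes the sum a reverse martingale with increments $\xi_j$ satisfying $|\xi_j|\lesssim b_j$. I would then apply a Berry--Esseen bound for martingales (Haeusler's version, or a Stein/Bolthausen-type bound). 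Its two error terms are
\[
\frac{\sum_j\E\xi_j^4}{V_m^2}\lesssim\Delta_m
\quad\text{and}\quad
\frac{\E\bigl|\sum_j\E[\xi_j^2\mid\cdot\,]-V_m\bigr|^2}{V_m^2}\lesssim\Delta_m ,
\]
using $V_m\asymp S_m$. Converting these moment bounds into a Kolmogorov--Smirnov bound costs a power, which gives the rate $\Delta_m^{1/5}$.

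\textbf{Step 3: coupling $N_H$ with the Haar clock.} Write $H=kB_r+s$. The lowest $r$ digits of $N_H$ are uniform on $\{0,\dots,B_r-1\}$ except on an event of probability $\eps_{H,r}$; this is what the maximal coupling of \Cref{rem:maximal-coupling} provides. Couple $N_H$ with $N^*$ so that the two agree in their first $r-1$ digits off this event. The levels $j\geq r$ then contribute an error of at most $\sum_{j=r}^m b_j+O(1)$ deterministically. The levels $j<r$ contribute at most $\sum_{j\le m}b_j$, and only on the bad event. By Hölder,
\[
\|\cD_H-\cD^*\|_p\lesssim\sqrt{S_m}\,\rho^{(p)}_{H,r,m}.
\]
Taking $p=2$ and using the Haar-clock moments from Step 1 gives \eqref{eq:mean-bound} and, by the triangle inequality in $L^2$, \eqref{eq:sd-bound}.

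\textbf{Step 4: Kolmogorov--Smirnov rate.} For \eqref{eq:main-rate} I would use the perturbation inequality
\[
\dK(X+E,\Normal)\leq\dK(X,\Normal)+\Pp(|E|>\delta)+C\delta .
\]
Here $X=(\cD^*-M_m)/\sqrt{V_m}$ and $E=(\cD_H-\cD^*)/\sqrt{V_m}$. Chebyshev's inequality gives $\Pp(|E|>\delta)\lesssim(\rho^{(2)})^2/\delta^2$, and choosing $\delta=(\rho^{(2)})^{2/3}$ yields the stated bound. The ``consequently'' part follows immediately by taking the infimum over $r$.

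The main obstacle is Step 2. One must choose the martingale filtration so that the conditional variances concentrate around $V_m$ with fluctuations of order $\sqrt{Q_m}$, even though every level depends on the shared tail of $Y$. Getting the exact constants $\tau^2(b)$ and $1/12$ also requires care with the boundary term.
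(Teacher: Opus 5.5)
Your Steps 1, 3 and 4 follow the paper's architecture: digital decomposition, maximal coupling of $N_H\bmod B_r$ with failure probability $\eps_{H,r}$, and Chebyshev with $\eta=(\rho^{(2)}_{H,r,m})^{2/3}$. Step 2, however, has a genuine gap, and it is exactly where the paper's key idea lies.

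Use the paper's indexing: level $i$ has radix $b_{i+1}$, $X_i$ is the centred level term, and $\cF_{i+1}=\sigma(a_j,e_j:j\geq i+1)$ records the coarser levels. The conditional means do not vanish: $\E[X_i\mid\cF_{i+1}]=\frac{b_{i+1}-1}{2b_{i+1}}\psi_{i+1}$, where $\psi_i=\1_{\{U_i(e)<U_i(a)\}}-U_i(a)$. Centring each term conditionally on the coarser levels therefore does not make the sum a reverse martingale. It leaves the remainder $\sum_{i<m}\frac{b_{i+1}-1}{2b_{i+1}}\psi_{i+1}$, a sum of $m$ bounded, positively correlated terms with variance of order $m$. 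This is comparable to $V_m\asymp S_m$ whenever the radices are bounded. For example, for $b_j\equiv2$:
\begin{itemize}
\item your martingale part has variance $\frac{11}{192}m$;
\item the discarded remainder has variance $\frac{m}{32}+O(1)$ and is strongly negatively correlated with it;
\item the true variance is $V_m=\frac{5}{192}m+O(1)$.
\end{itemize}
So the remainder cannot be dropped, and the conditional variances of your increments concentrate around the wrong constant. The same misconception appears in Step 1. The level term has variance $\tau^2(b_{i+1})+\frac{b_{i+1}-1}{12b_{i+1}}$, not $\tau^2(b_{i+1})$. Distinct levels are correlated, with $\Cov(\Xi_i,\Xi_j)=-\frac{(b_{i+1}-1)(b_{j+1}-1)}{24\,b_{i+1}\cdots b_{j+1}}$, and these covariances cancel the excess at leading order.

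The missing idea is a coboundary correction, i.e.\ an explicit solution of the Poisson equation. The remainder is carried by the single process $\psi$, and $\E[\psi_i\mid\cF_{i+1}]=\psi_{i+1}/b_{i+1}$. Hence the corrected increments $d_i=X_i+\frac12(\psi_i-\psi_{i+1})$ are exact reverse martingale differences, with $\sum_{i<m}X_i=\sum_{i<m}d_i+\frac12(\psi_m-\psi_0)$ and a telescoped remainder bounded by $1$. Only for these increments does one get the following:
\begin{itemize}
\item $\E[d_i^2]=\tau^2(b_{i+1})$;
\item orthogonality of $\sum_{i<m}d_i$ to both $\psi_0$ and $\psi_m$;
\item the boundary variance $\frac{1-B_m^{-1}}{12}$, which comes from $\frac12(\psi_m-\psi_0)$, i.e.\ from both ends rather than from a top-level residual.
\end{itemize}
Haeusler's bound then needs the explicit formula $\E[d_i^2\mid\cF_{i+1}]=\frac{b_{i+1}^2-1}{240b_{i+1}^2}(b_{i+1}^2+1+20G_{i+1})$, with $G_i=U_i(a)^2+\1_{\{U_i(e)<U_i(a)\}}(1-2U_i(a))$. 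It also needs geometric decorrelation of the $G_i$. Together these give $\Var(\sum_{i<m}\E[d_i^2\mid\cF_{i+1}])=O(m)=O(Q_m)$.

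A smaller flaw is in Step 3. On the good event the levels below $r$ do not coincide. Each depends on the whole clock tail through $\1_{\{U_{i+1}(e)<U_{i+1}(a)\}}$, and the clock digits from position $r$ on are not coupled, so the only deterministic bound is $r-1$. The paper instead uses that the two clock tails differ by at most $(b_{i+2}\cdots b_r)^{-1}$, while $U_{i+1}(a)$ is uniform and independent of both clocks. The indicators therefore disagree with at most that probability, and summing the $p$-th roots gives the constant behind the $1$ in $\rho^{(p)}_{H,r,m}$.
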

The next theorem also provides convergence of moments.
\begin{theorem}[Convergence of moments]\label{thm:moments}
Assume that \eqref{eq:main-conditions} holds along a sequence $H\to\infty$.  Fix $p>0$ and suppose there exist
$q>\max\{p,2\}$ and indices $1\leq r(H)\leq m(H)$ such that
$\rho_{H,r(H),m(H)}^{(q)}\to0$ along the same sequence $H\to\infty$. Then 
we have convergence of the $p$-th absolute moment, that is 
\begin{equation}\label{eq:moment-convergence}
 \E\left[\left|\frac{\cD_H-M_{m(H)}}{\sqrt{V_{m(H)}}}\right|^p\right]
 \longrightarrow \E[|G|^p],
 \qquad G\sim\Normal(0,1).
\end{equation}
If $p$ is a positive integer, then, under the same assumptions on $q$ and
$r=r(H)$, we get convergence of the $p$-th moment:
\begin{equation}\label{eq:ordinary-moment-convergence}
 \E\left[\left(\frac{\cD_H-M_{m(H)}}{\sqrt{V_{m(H)}}}\right)^p\right]
 \longrightarrow \E[G^p]
\end{equation}
along the same sequence $H\to\infty$
\end{theorem}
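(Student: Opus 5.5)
Convergence in law is given by \Cref{thm:main}, so the plan is the standard route: convergence in law plus uniform integrability gives convergence of moments. Write $W_H:=(\cD_H-M_m)/\sqrt{V_m}$ with $m=m(H)$. By \eqref{eq:main-explicit}, $W_H\Rightarrow G$. If we show $\sup_H\E|W_H|^{q'}<\infty$ for some $q'\in(\max\{p,2\},q]$, then $|W_H|^p$ is uniformly integrable. Both \eqref{eq:moment-convergence} and, for integer $p$, \eqref{eq:ordinary-moment-convergence} then follow, since $x\mapsto x^p$ is continuous with $|x^p|\le|x|^p$. Thus the whole task is a uniform $L^{q}$ bound on $W_H$.

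For the $L^q$ bound I would reuse the decomposition behind \Cref{thm:main}. There, $\cD_H$ is compared with a \emph{Haar-clock} model $\widetilde\cD$, in which the clock digits $c_i(N_H)$, $i<r$ (or the relevant block), are replaced by independent uniform digits. The model is built on the same probability space via the maximal coupling of \Cref{rem:maximal-coupling}, so the two clocks disagree with probability $\lesssim\eps_{H,r}$. In the Haar model, the discrepancy is a sum over digit levels $j\le m$ of terms $X_j$ that are bounded by $O(b_j)$, plus an $O(1)$ boundary term. These terms form a reverse-martingale (or independent) array, centred by $\mu(b_j)$, with conditional variances summing to $V_m$. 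Burkholder--Rosenthal then gives
\[
\Bigl\|\sum_{j\le m}(X_j-\mu(b_j))\Bigr\|_{q}\lesssim_q \sqrt{S_m}+\Bigl(\sum_j b_j^{q}\Bigr)^{1/q}\lesssim_q\sqrt{S_m},
\]
because $\sum b_j^q\le(\sum b_j^2)^{q/2}$ for $q\ge2$. The levels $j\ge r$ that are not coupled contribute at most $\sum_{j=r}^m b_j$ deterministically. On the decoupling event, the difference $\cD_H-\widetilde\cD$ is bounded by $C\sum_{j\le m}b_j$ (plus $O(1)$), so its $L^q$ norm is at most $C(\sum_j b_j)\,\eps_{H,r}^{1/q}$.

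Combining these pieces gives
\[
\|\cD_H-M_m\|_q\lesssim_q\sqrt{S_m}\,\bigl(1+\rho^{(q)}_{H,r,m}\bigr).
\]
The hypothesis $\rho^{(q)}_{H,r(H),m(H)}\to0$ makes this $O(\sqrt{S_m})$. Finally, $V_m\asymp S_m$ because $\tau^2(b)\asymp b^2$ uniformly in $b\ge2$. Hence $\sup_H\E|W_H|^q<\infty$, and since $q>p$ this gives uniform integrability of $|W_H|^p$.

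The main obstacle is the middle step: making sure the $L^q$ analogue of the estimates used for \eqref{eq:mean-bound}--\eqref{eq:sd-bound} really holds. In particular, one needs a deterministic bound $|\cD_H-\widetilde\cD|\lesssim\sum_{j\le m}b_j$ that holds pathwise, not only in $L^2$, so that Hölder with exponent $q$ produces the factor $\eps_{H,r}^{1/q}$. One also needs the martingale differences to be bounded by $O(b_j)$, so that Rosenthal's inequality applies. I would first try to extract both facts directly from the digit-wise formula for $D_N(y)$ in terms of $c_i(N)$ and the digits of $y$ established in the proof of \Cref{thm:main}. Each digit level contributes a bounded sawtooth-type quantity of size at most $b_j$, which is exactly what both facts require.
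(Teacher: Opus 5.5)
Your proposal is correct and is essentially the paper's proof: convergence in law from \Cref{thm:main}, plus a uniform $L^q$ bound, then uniform integrability. The $L^q$ bound comes by Minkowski from two pieces. The first is the Haar-model moment bound \eqref{eq:v71-auto-0051}, which is Burkholder applied to the reverse-martingale part $\sum_i d_i$ with $|d_i|\le 3b_{i+1}$, together with the bounded coboundary. The second is the coupling estimate of \Cref{lem:clock-coupling} with exponent $q$.

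The ``main obstacle'' you flag is already settled, because \Cref{lem:clock-coupling} is proved for every $p\geq1$. If you re-derive it yourself, note that on the agreement event the low levels $i\le r-2$ are not pathwise equal, since the tail indicators can still differ. The needed bound comes from the disagreement probability $(b_{i+2}\cdots b_r)^{-1}$, summed geometrically, and this is what produces the constant $1$ in $\rho^{(q)}_{H,r,m}$. A crude pathwise bound at those levels would give order $r$, which is too large compared with $\sqrt{S_m}$.
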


Although the assumptions \eqref{eq:main-conditions} may be difficult to check as stated, we also provide a simpler sufficient condition for the  central limit theorem to hold with an explicit rate. Recall \eqref{eq:SQDelta}. We have the following
\begin{corollary}\label{cor:old-condition}
Suppose that, for some $C,\eta>0$,
\begin{equation}\label{eq:old-condition}
 Q_m\leq C m^{-1/2-\eta}S_m^2
\end{equation}
for all sufficiently large $m$.  Then \Cref{thm:main,thm:moments} hold as $H\to\infty$
through all positive integers.  Moreover,
\begin{equation}\label{eq:old-rate}
 \dK\left(\frac{\cD_H-M_{m(H)}}{\sqrt{V_{m(H)}}},\Normal(0,1)\right)
 =O\left((\log m(H))^{1/2}m(H)^{-1/12-\eta/6}\right)
\end{equation}
as $H\to\infty$.
\end{corollary}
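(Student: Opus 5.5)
The plan is to obtain \Cref{cor:old-condition} from \Cref{thm:main,thm:moments} by making explicit choices of $r$ that verify \eqref{eq:main-conditions} and the hypothesis $\rho^{(q)}_{H,r,m}\to0$, and then bounding the two terms in \eqref{eq:main-rate}. Condition \eqref{eq:old-condition} says $\Delta_m\le Cm^{-1/2-\eta}$, so the first term gives $\Delta_m^{1/5}=O(m^{-1/10-\eta/5})$, which is smaller than the claimed rate. The work is therefore in the term $\rho^{(2)}_{H,r,m}$.

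\emph{Step 1: bounding the digit sums.} By Cauchy--Schwarz, $\sum_{j=r}^m b_j\le (m-r+1)^{1/2}\bigl(\sum_{j=r}^m b_j^2\bigr)^{1/2}$. I would also bound the tail sum $\sum_{j=r}^m b_j^2$ relative to $S_m$ using $Q_m$: Cauchy--Schwarz again gives $\sum_{j=r}^m b_j^2\le (m-r+1)^{1/2}Q_m^{1/2}\le C^{1/2}(m-r+1)^{1/2}m^{-1/4-\eta/2}S_m$. Writing $\ell=m-r+1$, this yields
\[
\frac{\sum_{j=r}^m b_j}{\sqrt{S_m}}\lesssim \ell^{3/4}m^{-1/8-\eta/4}.
\]
Similarly, $\sum_{j=1}^m b_j\le \sqrt{mS_m}$, and $1/\sqrt{S_m}\le 1/(2\sqrt m)$.

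\emph{Step 2: the clock error $\eps_{H,r}$.} Since $s<B_r$ and $H>B_{m-1}$, we have $\eps_{H,r}\le s/H\le B_r/B_{m-1}\le 2^{-(m-1-r)}$, because every $b_j\ge2$. Hence $(\sum_j b_j)\eps^{1/p}_{H,r}/\sqrt{S_m}\le \sqrt m\,2^{-(\ell-2)/p}$. I would choose $\ell\asymp p\log_2 m$ with a large constant, so that this term is $O(m^{-1})$. With this choice,
\[
\rho^{(p)}_{H,r,m}\lesssim (\log m)^{3/4}m^{-1/8-\eta/4}+m^{-1/2}\to0,
\]
uniformly over all $H$, and for every fixed $p$ (in particular for $p=q$ with any $q>\max\{p,2\}$). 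This verifies \eqref{eq:main-conditions} and the hypothesis of \Cref{thm:moments} along all $H\to\infty$.

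\emph{Step 3: the rate.} The bound above gives $(\rho^{(2)})^{2/3}\lesssim (\log m)^{1/2}m^{-1/12-\eta/6}$, which is exactly \eqref{eq:old-rate}, and it dominates $\Delta_m^{1/5}$. The rate exponents match the claimed ones (the power $(\log m)^{3/4}$ raised to $2/3$ gives $(\log m)^{1/2}$), which confirms the Cauchy--Schwarz route through $Q_m$ for the tail $\sum_{j=r}^m b_j^2$. If $m$ is small so that $r<1$, we take $r=1$ and the bound holds trivially after adjusting constants.

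The main obstacle is Step 1: controlling the tail digit sum by $\sqrt{S_m}$ without assuming any monotonicity of the $b_j$. The double Cauchy--Schwarz estimate through $Q_m$ is the key point, and it is what converts the Feller-type smallness of $\Delta_m$ into smallness of $\rho$.
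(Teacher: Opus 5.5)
Your proof is correct and follows the paper's argument: your double Cauchy--Schwarz is the paper's H\"older step with exponents $4$ and $4/3$, giving $\sum_{j=r}^m b_j/\sqrt{S_m}\le \ell^{3/4}\Delta_m^{1/4}$, and it is combined, as in the paper, with $\eps_{H,r}\le B_r/B_{m-1}\le 2^{-(\ell-2)}$ and a truncation depth $\ell\asymp\log m$ (re-chosen with $q$ for the moment statement). The one point you leave implicit is that the term $(\log m)^{3/4}m^{-1/8-\eta/4}$ dominates $m^{-1/2}$ only when $\eta\le 3/2$; this is harmless, because $\Delta_m\ge 1/m$ (Cauchy--Schwarz) makes the hypothesis vacuous unless $\eta\le 1/2$, as the paper observes.
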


\begin{remark}\label{rem:constant-radix}
    For a constant radix $\mathbf{b}=(q,q,q,\ldots)$, we have  $S_m=mq^2$ and $Q_m=mq^4$ and hence  condition \eqref{eq:old-condition} holds with $C=1$ and $\eta=\frac{1}{2}$. Note that 
\begin{equation}\label{eq:constant-base-formulas}
 M_m=m\frac{q^2-1}{12q},\qquad
 V_m=m\frac{3q^4+10q^2-13}{720q^2}
       +\frac{1-q^{-m}}{12}
\end{equation}
with $m=m(H)$  and $q^{m-1}<H\le q^{m}$.
Hence in this case the centering satisfies $M_m=O_q(\log H)$, while the standard-deviation scale satisfies $\sqrt{V_m}=O_q(\sqrt{\log H})$. The rate of convergence provided by \Cref{cor:old-condition} is $O_q((\log H)^{-1/6}(\log\log H)^{1/2})$. When the radices are bounded or have controlled (polynomial) growth, we can actually improve this rate of convergence, yielding $O_q((\log H)^{-1/5})$ for the constant radix case. 
\end{remark}

The next
result, which includes both bounded and polynomially growing radices, is a
direct corollary of \Cref{thm:main,thm:moments}.

\begin{corollary}[Bounded and polynomial radices]\label{cor:regular-radices}
Suppose that, for some $\alpha\geq0$ and constants
$0<c_-\leq c_+<\infty$,
\begin{equation}\label{eq:regular-radix-bounds}
 c_-j^\alpha\leq b_j\leq c_+j^\alpha
 \qquad(j\geq1).
\end{equation}
Then
\Cref{thm:main,thm:moments} hold uniformly through all $H\to\infty$. Moreover
\begin{equation}\label{eq:regular-rate}
 \dK\left(\frac{\cD_H-M_{m(H)}}{\sqrt{V_{m(H)}}}\:,\:\Normal(0,1)\right)=O_{\alpha,c_-,c_+}(
m(H)^{-1/5})
\end{equation}
uniformly for $H\geq2$.
\end{corollary}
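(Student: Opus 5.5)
We reduce \Cref{cor:regular-radices} to \Cref{thm:main,thm:moments}. We estimate $\Delta_m$ and $\rho^{(q)}_{H,r,m}$ under \eqref{eq:regular-radix-bounds}, and choose the index $r$ close to $m=m(H)$. The two-sided bounds give
\[
S_m\asymp\sum_{j\le m}j^{2\alpha}\asymp m^{2\alpha+1},\qquad Q_m\asymp m^{4\alpha+1},
\]
with constants depending only on $\alpha,c_\pm$. Hence $\Delta_m\asymp m^{-1}$ and $\Delta_m^{1/5}=O(m^{-1/5})$. It therefore suffices to find $r$ with $\bigl(\rho^{(2)}_{H,r,m}\bigr)^{2/3}=O(m^{-1/5})$, i.e.\ $\rho^{(2)}_{H,r,m}=O(m^{-3/10})$. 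For the moments we need $\rho^{(q)}_{H,r,m}\to0$ for every $q$, uniformly in $H$.

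The natural choice is $r=m$, or $r=m-1$ if that works better, which makes $\sum_{j=r}^m b_j=O(m^\alpha)$. The first two terms of $\rho$ then contribute $O\bigl(m^{\alpha}/m^{\alpha+1/2}\bigr)=O(m^{-1/2})$, which is fine. The delicate term is
\[
\Bigl(\sum_{j\le m}b_j\Bigr)\eps_{H,r}^{1/p}\big/\sqrt{S_m}\asymp m^{1/2}\,\eps_{H,r}^{1/p},
\]
so we must control $\eps_{H,r}$. With $r=m$ we have $H\le B_m$. Either $H=B_m$, giving $s=0$ and $\eps=0$, or $H<B_m$, giving $k=0$, $s=H$ and $\eps_{H,m}=(B_m-H)/B_m$, which need not be small. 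This is the main obstacle. We instead pick $r$ smaller, with $B_r\ll H$. Writing $H=kB_r+s$ gives
\[
\eps_{H,r}\le \frac{s}{H}\le \frac{B_r}{H}<\frac{B_r}{B_{m-1}}=\prod_{j=r+1}^{m-1}b_j^{-1}.
\]
If $r=m-L-1$, this is at most $2^{-L}$ because each $b_j\ge2$. When $\alpha=0$ we may have $b_j=2$ for all $j$, so only $2^{-L}$ is guaranteed.

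We take $L=\lceil C_0\log m\rceil$. Then $\eps^{1/p}\le 2^{-L/p}\le m^{-C_0\log 2/p}$, so the third term is $O\bigl(m^{1/2-C_0\log2/p}\bigr)$. The middle term becomes
\[
\frac{\sum_{j=r}^m b_j}{\sqrt{S_m}}\le \frac{(L+2)\,c_+m^\alpha}{c\,m^{\alpha+1/2}}=O\bigl(m^{-1/2}\log m\bigr).
\]
Choosing $C_0$ large, say $C_0\log2\ge p$, yields $\rho^{(p)}_{H,r,m}=O(m^{-1/2}\log m)$ for fixed $p$. For $m$ so small that $r<1$, we take $r=1$ and absorb the case into the constant; this is legitimate because $\dK\le1$ and finitely many $m$ only affect the constants.

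With $p=2$ we obtain $(\rho^{(2)})^{2/3}=O\bigl(m^{-1/3}(\log m)^{2/3}\bigr)=o(m^{-1/5})$. Together with $\Delta_m^{1/5}=O(m^{-1/5})$, \eqref{eq:main-rate} gives \eqref{eq:regular-rate} uniformly in $H\ge2$. The conditions \eqref{eq:main-conditions} hold as $H\to\infty$, so \Cref{thm:main} applies along every sequence. For \Cref{thm:moments}, given $p>0$ we fix any $q>\max\{p,2\}$ and take $r(H)$ as above with $C_0\log2\ge q$. This gives $\rho^{(q)}_{H,r(H),m(H)}\to0$, and the moment convergence follows.
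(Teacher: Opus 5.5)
Your proposal is correct and follows the paper's proof essentially step for step. The paper bounds $S_m$ from below and $Q_m$ from above to get $\Delta_m=O(m^{-1})$, takes $r=m-L$ with $L=\lceil 2\log_2 m\rceil+1$ so that $\eps_{H,r}\le B_r/B_{m-1}\le m^{-2}$, and bounds $\sum_{j=r}^m b_j/\sqrt{S_m}=O(\log m/\sqrt m)$, giving $\bigl(\rho^{(2)}_{H,r,m}\bigr)^{2/3}=O\bigl((\log m)^{2/3}m^{-1/3}\bigr)$. It handles moments by taking $q=p+2$ and $L_q=\lceil q\log_2 m\rceil+1$, which matches your choice of $C_0$ up to constants.
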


\subsection{Related results}
An earlier mixed-radix temporal central limit theorem was stated by Avdeeva,
Cellarosi and Sinai~\cite[Theorem~3.2]{ACS}, in reference to an unpublished manuscript by Avdeeva. The assumption therein is the same as  \eqref{eq:old-condition} in \Cref{cor:old-condition}, although 
the claimed limit theorem was only qualitative, with no explicit rate.
We therefore supply a stronger quantitative generalisations (\Cref{thm:main} and \Cref{cor:old-condition}) of the
unpublished result announced in \cite{ACS}.   
\Cref{thm:moments} also gives moment convergence under the additional
hypothesis stated there.

In the fixed-base setting, the predecessor of these results is the temporal central limit theorem of Levin and Merzbach~\cite{LevinMerzbach}. Their argument uses a digital decomposition followed by Bernstein blocking: after truncation to finite digit windows, separated block sums become independent, and an independent-array Berry--Esseen theorem gives a Kolmogorov-Smirnov rate \(O((\log_q H)^{-1/7})\); their proof in fact yields \(O_q((\log H)^{-1/6}(\log\log H)^{3/2})\). This differs from our reverse-martingale--coboundary decomposition and application of Haeusler's martingale Berry--Esseen theorem. For comparison, our analysis yields a rate \(O_q((\log H)^{-1/5})\) in constant
radix.
Although their formal results assume a constant radix, Levin and Merzbach
state in \cite[Remark~1.2]{LevinMerzbach} that the same proof extends to
varying radices satisfying \(\sum_{j\le m}b_j=O(m)\).  This assertion is in general
false without an additional Feller-type condition:
\Cref{ex:dominant-radix} gives a radix sequence satisfying this bound for
which the  centered and normalized discrepancy has a nontrivial
non-Gaussian subsequential limit. 
Levin and Merzbach also announce a treatment of arbitrary radix sequences in a forthcoming paper.  We have not located a published or preprint temporal central limit theorem realizing that announcement.  Levin subsequently studied the same mixed-radix odometers in a deterministic multidimensional discrepancy problem~\cite{LevinHaltonII}.  More precisely, for each of finitely many  spatial coordinate $i$, Levin chooses a sequence of mixed-radices
\(\mathbf b^{(i)}=(b_{i,1},b_{i,2},\ldots)\) and uses the corresponding odometer to form a generalized Halton sequence, also allowing digit permutations. The assumptions in \cite{LevinHaltonII} are that \(\prod_{j=1}^{m}b_{i,j}\leq C^m\), uniformly in \(i\) and \(m\), and that $\gcd(b_{i,k},b_{i',\ell})=1$
 whenever $i\neq i'$.
Levin proves an optimal-order lower bound for the worst-case local
discrepancy 
and does not
provide a temporal limit theorem.

Borda~\cite{Borda} proved a
quantitative fixed-base CLT and large-deviation estimates for the integrated
discrepancy $\int_0^1D_N(y)\,dy$, for $L^p$ discrepancy, and for certain
smooth ergodic sums.  
Drmota, Larcher and
Pillichshammer~\cite{DLP} obtained precise distribution results for van der
Corput and related digital sequences.

A particularly close methodological precedent in a non-mixing temporal
setting is Paquette and Son~\cite{PaquetteSon}.  For primitive substitution
systems they encode a uniformly sampled time by a finite prefix--suffix path,
use total-variation couplings to replace the bulk of the induced finite-path
law by a stationary Markov path law, and then apply a Markov-chain central
limit theorem.  
In our analyis, the reference law is instead product Haar measure.

As a broader methodological comparison outside the non-mixing setting,
Korepanov~\cite{Korepanov} couples Birkhoff-sum processes associated with
different initial measures so that they remain almost surely close, and uses
this comparison to transfer an almost sure invariance principle for
nonuniformly expanding systems.  There the coupling expresses memory loss of
the underlying expanding dynamics rather than temporal recoding of a
non-mixing system.

Central limit phenomena for the discrepancy of fixed-base radical-inverse sequences with digit permutations were studied by Wohlfarter~\cite{Wohlfarter}.  Cantor-base van der Corput and Halton sequences, constructed from mixed-radix expansions of the same type as \eqref{eq:vdc}, have been studied through classical discrepancy bounds by Haddley, Lertchoosakul and Nair~\cite{HLN} and by Bedna\v{r}\'ik, Lertchoosakul, Marques and Trojovsk\'y~\cite{BLMT}.  Those works do not address the joint temporal--endpoint distribution studied here.

For the fixed radix \(2\), Levin~\cite{LevinDigital} proved a related central limit theorem for the local discrepancy of higher-dimensional digital nets and, as a corollary, initial blocks of digital sequences, with randomness coming jointly from a binary digital shift and the box endpoint.  A recent temporal central limit theorem for a multidimensional product adding machine is also due to Levin~\cite{LevinMulti}.  In that result, each spatial coordinate \(i\) uses a fixed prime radix \(p_i\), with the primes \(p_1,\ldots,p_s\) distinct.  The theorem treats a fixed box satisfying a digit nondegeneracy condition
and Lebesgue-almost every initial point.  

For the fixed-base van der Corput sequence, Beretti~\cite{Beretti}
studies the mean-square discrepancy obtained by translating an interval
of fixed length around the circle.  He characterizes its growth in terms
of the base-\(p\) expansion of the interval length.

  The terminology ``temporal limit theorem''
follows Dolgopyat and Sarig~\cite{DolgopyatSarig}. See also Bromberg and
Ulcigrai~\cite{BrombergUlcigrai} for rotations.

\subsection{Structure of the paper}
Section~2 derives a digit-by-digit decomposition of the local discrepancy
and rewrites each contribution in a form suited to conditional-expectation
arguments.  Section~3 first compares the joint distribution of the first \(r\) digits
of a uniform random time with that of the first \(r\) digits of a
Haar-random clock string.
Then it introduces the ideal Haar-clock model and develops its martingale--coboundary decomposition, computes its mean,
covariance structure, and variance exactly, and proves the auxiliary
estimates needed for a limit theorem.  Section~4 uses this
structure to prove a quantitative central limit theorem and moment convergence
for the Haar model.  Section~5 couples finite clock strings to Haar strings,
establishes a transfer principle for general random-time laws, and applies it
to uniform random times. This yields the our main results: \Cref{thm:main} and \Cref{thm:moments}.  This section also derives Corollaries \ref{cor:old-condition} and \ref{cor:regular-radices}.  Section~6 tests the scope and
limitations of the theory through two irregular radix sequences:
an example that is not covered by either corollary but for
which the main theorem yields a quantitative Gaussian limit, and a non-example  with a non-Gaussian subsequential
limit.  Section~7 records extensions left for future work.

\section{Digital decomposition}

Recalling the digit convention following
\eqref{B-expansion-real-x}, write every \(y\in[0,1)\) uniquely as
\begin{equation}\label{eq:y-digits}
 y=\sum_{i\geq0}\frac{a_i}{B_{i+1}},
 \qquad 0\leq a_i<b_{i+1}.
\end{equation}
Call \(q=(q_i)_{i\geq0}\) an \emph{admissible} digit string if
\(0\leq q_i<b_{i+1}\) for every \(i\). An admissible string need not
have finite support: the digits \(a=(a_i)_{i\geq0}\) of the endpoint
\(y\) of the interval appearing in \eqref{eq:D-N} may continue
indefinitely. By contrast, the digit string
\begin{equation}\label{eq:v71-auto-0002}
 c(n)=(c_i(n))_{i\geq0}
\end{equation}
of an integer \(n\), as in \eqref{digits_of_integers}, has finite
support, and we extend it by zeros.
For any admissible digit string \(q\) and \(i\geq0\), define
\begin{equation}\label{eq:tail-U}
 U_i(q):=\sum_{k=0}^{\infty}
 \frac{q_{i+k}}{b_{i+1}\cdots b_{i+k+1}}.
\end{equation}
Thus \(U_i(q)\) is the normalized mixed-radix tail beginning at
position \(i\), and
\begin{equation}\label{Ui_and_Ui+1}
 U_i(q)=\frac{q_i+U_{i+1}(q)}{b_{i+1}}.
\end{equation}
For \(0\leq d<b_{i+1}\), define
\begin{equation}\label{eq:tail-replaced}
 W_i(d,q):=
 \frac{d+U_{i+1}(q)}{b_{i+1}}=
 \frac{d}{b_{i+1}}
 +\frac{q_{i+1}}{b_{i+1}b_{i+2}}
 +\frac{q_{i+2}}{b_{i+1}b_{i+2}b_{i+3}}
 +\cdots.
\end{equation}
Thus \(W_i(d,q)\) is the normalized tail beginning at position \(i\)
obtained by replacing its initial digit \(q_i\) with \(d\), while
leaving the subsequent digits
\(q_{i+1},q_{i+2},\ldots\) unchanged.

\begin{lemma}[Digital decomposition of the discrepancy]
\label{lem:digital-decomposition}
Let \(y\) be as in \eqref{eq:y-digits}. Let \(N\geq1\) be an integer
and write
\begin{equation}
 e_i:=c_i(N),
 \qquad
 N=\sum_{i\geq0}e_iB_i.\label{e_is_Digital_Decomposition_Lemma}
\end{equation}
Then
\begin{equation}\label{eq:digital-decomposition}
 D_N(y)=\sum_{i\geq0}\xi_i(e,y),
\end{equation}
where the sum is finite and
\begin{equation}\label{eq:xi-original}
 \xi_i(e,y):=\sum_{d=0}^{e_i-1}
 \left[-U_i(a)+\1_{\{W_i(d,e)<U_i(a)\}}\right].
\end{equation}
\end{lemma}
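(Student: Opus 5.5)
We decompose the range $\{0,\ldots,N-1\}$ into digit blocks determined by the expansion $N=\sum_i e_iB_i$, as in the classical van der Corput argument. Let $L$ be the largest index with $e_L\neq0$. For each $i$ with $0\le i\le L$ and each $0\le d<e_i$, set
\[
P_{i,d}:=\sum_{k>i}e_kB_k+dB_i .
\]
The integers $n\in[0,N)$ split disjointly into the blocks
\[
I_{i,d}:=\{P_{i,d}+t:\ 0\le t<B_i\}.
\]
Each block has the property that $c_k(n)=e_k$ for $k>i$, $c_i(n)=d$, and $(c_0(n),\ldots,c_{i-1}(n))$ ranges over all admissible values exactly once. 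This is the usual greedy splitting of an integer interval along the digits, and it has $\sum_i e_i$ blocks in total. It therefore suffices to prove, for each block,
\[
\sum_{n\in I_{i,d}}\bigl(\1_{[0,y)}(x_n)-y\bigr)=-U_i(a)+\1_{\{W_i(d,e)<U_i(a)\}}.
\]
Summing over blocks gives \eqref{eq:digital-decomposition}, and the sum is finite.

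\emph{Geometry of one block.} By \eqref{eq:vdc}, for $n\in I_{i,d}$ we can write
\[
x_n=\sum_{k<i}\frac{c_k(n)}{B_{k+1}}+\frac{W_i(d,e)}{B_i},
\]
since
\[
\sum_{k\ge i}\frac{c_k(n)}{B_{k+1}}=\frac1{B_i}\Bigl(\frac{d}{b_{i+1}}+\frac{e_{i+1}}{b_{i+1}b_{i+2}}+\cdots\Bigr)=\frac{W_i(d,e)}{B_i}.
\]
The first sum, as the low digits vary over all admissible values, runs over exactly the grid $\{j/B_i:0\le j<B_i\}$. Hence the block visits precisely the points
\[
\frac{j+w}{B_i},\qquad 0\le j<B_i,\quad w:=W_i(d,e)\in[0,1),
\]
each exactly once.

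\emph{Counting visits.} Write $y=\dfrac{A+u}{B_i}$, where
\[
A=\sum_{k<i}a_kB_i/B_{k+1}\in\{0,\ldots,B_i-1\},\qquad u=U_i(a)\in[0,1).
\]
This is the mixed-radix analogue of splitting $y$ into its first $i$ digits and its tail, and it follows from \eqref{eq:y-digits} and \eqref{eq:tail-U}. Here $u<1$ holds because of the convention excluding eventually maximal digit strings. The number of $j$ with $(j+w)/B_i<y$, that is with $j+w<A+u$, equals
\[
A+\1_{\{w<u\}}.
\]
Indeed, every $j\le A-1$ qualifies since $w<1\le 1+u$; $j=A$ qualifies iff $w<u$; and no $j\ge A+1$ qualifies. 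Subtracting $B_iy=A+u$ gives the block contribution $\1_{\{w<u\}}-u$, which is the claimed term.

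\emph{Main obstacle.} The only delicate point is the boundary and convention bookkeeping. We must check that $W_i(d,e)<1$, which holds because $d\le b_{i+1}-1$ and the tail $U_{i+1}(e)$ comes from a finitely supported string, hence is $<1$. We must also check that $U_i(a)<1$, using the non-eventually-maximal convention for $y$. Finally, the strict inequality matching $\1_{[0,y)}$ must line up with the strict inequality $W_i<U_i$ in the statement. Once these are verified, the rest is the standard block decomposition above.
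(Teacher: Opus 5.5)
Your proposal is correct and follows essentially the same route as the paper. You use the same blocks $\mathcal I_{i,d}$, the same identification of each block's points as $(k+W_i(d,e))/B_i$ via the reversed-radix bijection (the paper's $\rho_i$), and the same count $\lfloor B_iy\rfloor+\1_{\{W_i(d,e)<U_i(a)\}}$ after writing $B_iy=\lfloor B_iy\rfloor+U_i(a)$. The two facts you leave implicit are routine and are stated only briefly in the paper as well: that the blocks partition $\{0,\ldots,N-1\}$ (sort $n<N$ by the largest index where its digits differ from those of $N$), and that the low-digit sum times $B_i$ is a bijection onto $\{0,\ldots,B_i-1\}$.
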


\begin{proof}
Recall the definition \eqref{eq:D-N} of the local discrepancy. 
We  partition the set \(\{0,\ldots,N-1\}\) into blocks and
calculate the contribution of each block directly to 
\eqref{eq:D-N}.
For \(i\geq0\) and \(0\leq d<e_i\), define
\begin{equation}\label{eq:block-I-id}
 \mathcal I_{i,d}
 :=
 \left\{
 \sum_{j>i}e_jB_j+dB_i+s:
 0\leq s<B_i
 \right\}.
\end{equation}
By \eqref{digits_of_integers}, $\mathcal I_{i,d}$ consists precisely
of the integers whose digits satisfy $c_j(n)=e_j$ for $j>i$ and
$c_i(n)=d$, with arbitrary admissible digits below $i$. Therefore, organizing $n<N$ by the largest position where its digits differ
from those of $N$ gives the partition
\begin{equation}\label{eq:block-partition}
 \{0,\ldots,N-1\}
 =
 \bigsqcup_{i\geq0}\bigsqcup_{d=0}^{e_i-1}\mathcal I_{i,d}.
\end{equation}
Using this partition in the defining formula
\eqref{eq:D-N}, we obtain
\begin{equation}\label{eq:discrepancy-block-partition}
 D_N(y)
 =
 \sum_{i\geq0}\sum_{d=0}^{e_i-1}
 \sum_{n\in\mathcal I_{i,d}}
 \left(\1_{[0,y)}(x_n)-y\right).
\end{equation}
Now fix \(i\geq0\) and \(0\leq d<e_i\). By
\eqref{eq:block-I-id}, every \(n\in\mathcal I_{i,d}\) has a unique
representation
\begin{equation}
 n=\sum_{j>i}e_jB_j+dB_i+s,\label{eq:n=sum+dBi+s}
 \qquad 0\leq s<B_i.
\end{equation}
Write the base-$\mathbf{b}$ expansion of \(s\) as
 $s=\sum_{j=0}^{i-1}c_j(s)B_j$.
Since \(s<B_i\), one has \(c_j(s)=0\) for every \(j\geq i\). There are
no carries \eqref{eq:n=sum+dBi+s}, and hence
\begin{equation}\label{eq:v71-auto-0007}
 c_j(n)=
 \begin{cases}
  c_j(s),&0\leq j<i,\\
  d,&j=i,\\
  e_j,&j>i.
 \end{cases}
\end{equation}
It follows from the definition of \(x_n\) and \eqref{eq:tail-replaced} that
\begin{align}
 x_n
 &=
 \sum_{j=0}^{i-1}\frac{c_j(s)}{B_{j+1}}
 +\frac{d}{B_{i+1}}
 +\sum_{j>i}\frac{e_j}{B_{j+1}} =
 \sum_{j=0}^{i-1}\frac{c_j(s)}{B_{j+1}}
 +\frac{W_i(d,e)}{B_i}.
 \label{eq:block-point-form}
\end{align}

For $i\geq 1$ and  \(0\leq s<B_i\), define the integer
\begin{equation}\label{def:rho_i(s)}
 \rho_i(s)
 :=
 B_i\sum_{j=0}^{i-1}\frac{c_j(s)}{B_{j+1}}=c_{i-1}(s)
 +b_i c_{i-2}(s)
 +b_i b_{i-1}c_{i-3}(s)
 +\cdots
 +(b_i\cdots b_2)c_0(s).
 \end{equation}
For \(i=0\), we set \(\rho_0(0)=0\).
Note that $\rho_i$ is a permutation of $\{0,1,\ldots,B_i-1\}$. Indeed, as \(s\) ranges over
\(\{0,1,\ldots,B_i-1\}\), the digit vector
 $\bigl(c_0(s),\ldots,c_{i-1}(s)\bigr)$ from \eqref{def:rho_i(s)}
ranges once through all admissible digit vectors, i.e. through the set $\prod_{\ell=0}^{i-1}\{0,1,\ldots,b_{\ell+1}-1\}$. The latter is precisely the set of all admissible digit vectors of elements of $\{0,1,\ldots, B_i-1\}$. 

Now, using \eqref{def:rho_i(s)}, the expression  \eqref{eq:block-point-form} for $x_n$ becomes
\begin{equation}\label{eq:v71-auto-0008}
 x_n=\frac{\rho_i(s)+W_i(d,e)}{B_i}.
\end{equation}
Consequently, as \(n\) ranges through \(\mathcal I_{i,d}\), the points
\(x_n\) are precisely
\begin{equation}\label{eq:block-points}
 \frac{k+W_i(d,e)}{B_i},
 \qquad 0\leq k<B_i,
\end{equation}
each occurring exactly once.

Next, we write the endpoint \(y\) in a form similar  to \eqref{eq:block-point-form}. From
\eqref{eq:y-digits}, we have
 $y
 =
 \sum_{j=0}^{i-1}\frac{a_j}{B_{j+1}}
 +\sum_{j\geq i}\frac{a_j}{B_{j+1}}$ and multiplication by \(B_i\) gives
\begin{equation}\label{eq:y-prefix-tail}
 B_i y
 =
 B_i\sum_{j=0}^{i-1}\frac{a_j}{B_{j+1}}
 +U_i(a).
\end{equation}
The first term on the right-hand side is an integer. Moreover, the
digit convention following \eqref{B-expansion-real-x} gives
 $0\leq U_i(a)<1$.
It follows that
\begin{equation}\label{eq:floor-tail}
 \lfloor B_i y\rfloor
 =
 B_i\sum_{j=0}^{i-1}\frac{a_j}{B_{j+1}},
 \qquad
 B_i y=\lfloor B_i y\rfloor+U_i(a).
\end{equation}
Set
 $\ell_i:=\lfloor B_i y\rfloor\in\{0,1,\ldots, B_i-1\}$. 
By \eqref{eq:block-points} and \eqref{eq:floor-tail},
\begin{align}
 \#\{n\in\mathcal I_{i,d}:x_n<y\}
 &=
 \#\left\{
 0\leq k<B_i:
 \frac{k+W_i(d,e)}{B_i}
 <
 \frac{\ell_i+U_i(a)}{B_i}
 \right\}\label{eq:v71-auto-0188}\\
 &=
 \#\left\{
 0\leq k<B_i:
 k+W_i(d,e)<\ell_i+U_i(a)
 \right\}.\label{card_n_in_I_i,d_st_xn<y}
\end{align}
Since the sequence $e$ has finite support, we have that $0\leq W_i(d,e)<1$. Thus, if \(k<\ell_i\), then
\begin{equation}\label{eq:v71-auto-0009}
 k+W_i(d,e)<k+1\leq\ell_i\leq\ell_i+U_i(a),
\end{equation}
so the inequality in \eqref{card_n_in_I_i,d_st_xn<y} holds. If \(k>\ell_i\), then
\begin{equation}\label{eq:v71-auto-0010}
 k+W_i(d,e)\geq k\geq\ell_i+1
 >\ell_i+U_i(a)
\end{equation}
and the inequality in \eqref{card_n_in_I_i,d_st_xn<y} fails. Finally, for \(k=\ell_i\), the inequality
holds precisely when
\begin{equation}\label{eq:v71-auto-0011}
 W_i(d,e)<U_i(a).
\end{equation}
Therefore,
\begin{equation}\label{eq:block-count}
 \#\{n\in\mathcal I_{i,d}:x_n<y\}
 =
 \lfloor B_i y\rfloor
 +\1_{\{W_i(d,e)<U_i(a)\}}.
\end{equation}

Recall that the block \(\mathcal I_{i,d}\) contains exactly \(B_i\) integers. Using \eqref{eq:block-count} and \eqref{eq:y-prefix-tail}, we see that the block's contribution to 
\eqref{eq:discrepancy-block-partition} is 
\begin{align}
 \sum_{n\in\mathcal I_{i,d}}
 \left(\1_{[0,y)}(x_n)-y\right)
 &=
 \#\{n\in\mathcal I_{i,d}:x_n<y\}-B_i y\label{eq:v71-auto-0189}\\
 &=
 -U_i(a)+\1_{\{W_i(d,e)<U_i(a)\}}.\label{inner-sum-worked-out}
\end{align}
Therefore, \eqref{eq:block-partition} and \eqref{inner-sum-worked-out} give \eqref{eq:digital-decomposition} with $\xi_{i}(e,y)$ as in \eqref{eq:xi-original}. 

We are left to show that the sum in \eqref{eq:digital-decomposition} is finite. To this end, note the digit string \(e=c(N)\) has finite support. Hence
\(e_i=0\) for all sufficiently large \(i\), and the corresponding
inner sums are empty. 
\end{proof}

While \Cref{lem:digital-decomposition} identifies the discrepancy of interest as a sum of the $\xi_i(e,y)$'s, the next lemma allows us to rewrite each summand in a way that is more amenable to our probabilistic analysis.
Recall the definition of $U_i$ from \eqref{eq:tail-U}.


\begin{lemma}[Local normal form]\label{lem:local-normal-form} Let $y$ and $a=(a_i)_{i\geq0}$ be as in \eqref{eq:y-digits}, and $e=(e_i)_{i\geq0}$ as in \eqref{e_is_Digital_Decomposition_Lemma}.
Then, for every $i\geq0$, we have
\begin{equation}\label{eq:local-normal-form}
 \xi_i(e,y)=\min(a_i,e_i)
 +\1_{\{a_i<e_i\}}\1_{\{U_{i+1}(e)<U_{i+1}(a)\}}
 -\frac{e_i(a_i+U_{i+1}(a))}{b_{i+1}}.
\end{equation}
In particular,
\begin{equation}\label{eq:xi-bound}
 |\xi_i(e,y)|\leq e_i\leq b_{i+1}-1.
\end{equation}
\end{lemma}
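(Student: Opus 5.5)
We compute the two parts of the summand in \eqref{eq:xi-original} separately, starting from the definition. The first part is elementary: by \eqref{Ui_and_Ui+1},
\[
\sum_{d=0}^{e_i-1}\bigl(-U_i(a)\bigr)=-e_iU_i(a)=-\frac{e_i(a_i+U_{i+1}(a))}{b_{i+1}},
\]
which is exactly the last term of \eqref{eq:local-normal-form}.

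For the indicator sum we compare $W_i(d,e)=(d+U_{i+1}(e))/b_{i+1}$ with $U_i(a)=(a_i+U_{i+1}(a))/b_{i+1}$. This amounts to comparing $d+U_{i+1}(e)$ with $a_i+U_{i+1}(a)$. Both tails lie in $[0,1)$: for $e$ because it has finite support, and for $a$ by the digit convention, as already used in the proof of \Cref{lem:digital-decomposition}. The same integer/fractional-part argument as in \eqref{eq:v71-auto-0009}--\eqref{eq:v71-auto-0011} gives three cases.
\begin{itemize}
\item If $d<a_i$, the indicator equals $1$.
\item If $d>a_i$, it equals $0$.
\item If $d=a_i$, it equals $\1_{\{U_{i+1}(e)<U_{i+1}(a)\}}$.
\end{itemize}

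We now sum over $0\le d\le e_i-1$. The number of $d$ with $d<a_i$ is $\min(a_i,e_i)$. The value $d=a_i$ lies in the range precisely when $a_i<e_i$. Hence
\[
\sum_{d=0}^{e_i-1}\1_{\{W_i(d,e)<U_i(a)\}}=\min(a_i,e_i)+\1_{\{a_i<e_i\}}\1_{\{U_{i+1}(e)<U_{i+1}(a)\}},
\]
and combining with the first part gives \eqref{eq:local-normal-form}.

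For \eqref{eq:xi-bound}, we use the original form \eqref{eq:xi-original} rather than the normal form. Each of its $e_i$ summands has the form $-U_i(a)+\1$, with $U_i(a)\in[0,1)$ and $\1\in\{0,1\}$, so each summand lies in $(-1,1]$. The absolute value is therefore at most $e_i$, and $e_i\le b_{i+1}-1$ because $e_i$ is an admissible digit.

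There is no real obstacle. The only point needing care is the strictness of $U_{i+1}(a)<1$ in the case $d=a_i$ (and $U_{i+1}(e)<1$ in the case $d>a_i$), which is exactly the excluded-expansion convention and the finite support of $e$.
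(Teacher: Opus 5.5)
Your proof is correct and is essentially the paper's argument. You reduce the indicator to comparing $d+U_{i+1}(e)$ with $a_i+U_{i+1}(a)$, split into $d<a_i$, $d=a_i$, $d>a_i$, count to get $\min(a_i,e_i)+\1_{\{a_i<e_i\}}\1_{\{U_{i+1}(e)<U_{i+1}(a)\}}$, and bound each of the $e_i$ summands of \eqref{eq:xi-original} by one in absolute value. One slip in your closing remark, which does not affect the proof since you establish both strict bounds: $U_{i+1}(e)<1$ is what the case $d<a_i$ uses, $U_{i+1}(a)<1$ is what the case $d>a_i$ uses, and the case $d=a_i$ needs no bound at all.
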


\begin{proof} Since \eqref{eq:local-normal-form} is a statement for fixed $i$, let us use the shorthands \begin{equation}\label{eq:v71-auto-0268}
A:=a_i,\quad E:=e_i,\quad  B:=b_{i+1},\quad U:=U_{i+1}(a),\quad V:=U_{i+1}(e).
\end{equation}
By \eqref{Ui_and_Ui+1} and \eqref{eq:tail-replaced}, for $0\le d<E$, the indicator in
\eqref{eq:xi-original} is equal to 
$\1_{\{W_i(d,e)<U_i(a)\}}
 =\1_{\{d+V<A+U\}}.
$
Given $A, U, V$, we now count how many $d\in\{0,\ldots,E-1\}$ satisfy the inequality $d+V<A+U$.  
$ $
Every $d<A$ satisfies it, because
$d+V<d+1\leq A\leq A+U$, whereas every $d>A$ fails it, because
$d+V\geq A+1>A+U$.  The only remaining possibility is $d=A$.  This digit
belongs to the range of summation precisely when $A<E$, and in that case
the inequality reduces to $V<U$.  Thus
\begin{equation}\label{eq:local-indicator-count}
 \sum_{d=0}^{E-1}\1_{\{d+V<A+U\}}
 =\min(A,E)+\1_{\{A<E\}}\1_{\{V<U\}}.
\end{equation}
This argument also covers the cases $A=0$ or $E=0$; when $E=0$, both sides of
\eqref{eq:local-indicator-count} vanish.
There are exactly $E$ summands in \eqref{eq:xi-original}, and
$U_i(a)=(A+U)/B$.  Combining these facts with \eqref{eq:local-indicator-count}
gives
\eqref{eq:local-normal-form}.  Finally, since each summand in
\eqref{eq:xi-original} 
has absolute value at most one,
we get $|\xi_i(e,y)|\leq E$. By definition, $E\le B-1$ and we get the bound \eqref{eq:xi-bound}.
\end{proof}
We illustrate the previous two lemmata with a fully worked out example.
\begin{example}
\label{ex:digital-decomposition}
Let $\mathbf{b}=(2^2,3^2,5^2,7^2,\ldots)$ be the primes squared. Then
\begin{equation}\label{eq:example-dd-products}
 (B_0,B_1,B_2,B_3,B_4,\ldots)=(1,4,36,900,44100,\ldots).
\end{equation}
For $N=1982$, the digits $e_i=c_i(1982)$ satisfy
\begin{equation}\label{eq:example-dd-digits}
 1982=
 \underbrace{\textcolor{RubineRed}{2}}_{e_0}B_0
 +\underbrace{\textcolor{RedOrange}{0}}_{e_1}B_1
 +\underbrace{\textcolor{Green}{5}}_{e_2}B_2
 +\underbrace{\textcolor{blue}{2}}_{e_3}B_3,
\end{equation}
and $e_i=0$  for $i\geq4$.
In particular,
\begin{equation}\label{eq:example-dd-orbit-point}
 x_{1982}
 =\frac{\textcolor{RubineRed}{2}}{B_1}
  +\frac{\textcolor{RedOrange}{0}}{B_2}
  +\frac{\textcolor{Green}{5}}{B_3}
  +\frac{\textcolor{blue}{2}}{B_4}
 =\frac{22297}{44100}.
\end{equation}
The blocks from \eqref{eq:block-I-id} partition
$\{0,\ldots,1981\}$ as follows:
\begin{equation}\label{eq:example-dd-blocks}
\begin{array}{c|c|l}
 i & d & \mathcal I_{i,d} \\ \hline
 3 & 0,1
   & \{900d,1+900d,\ldots,899+900d\}
     \quad\text{(length $900$)} \\
 2 & 0,\ldots,4
   & \{1800+36d,1801+36d,\ldots,1835+36d\}
     \quad\text{(length $36$)} \\
 0 & 0,1
   & \{1980+d\}
     \quad\text{(length $1$)}
\end{array}
\end{equation}
There are no blocks at level $i=1$, since
$e_1=\textcolor{RedOrange}{0}$, or at levels $i\geq4$.
Consider the endpoint with digits $a_i=1$ for every $i\geq0$, that is
\begin{equation}\label{eq:example-dd-endpoint}
 y=\sum_{j\geq1}\frac{1}{B_j}
 \approx0.278911753141375.
\end{equation}

The replaced tails \eqref{eq:tail-replaced} are
\begin{align}
 W_0(d,e)
 &=\frac{d}{4}
   +\frac{\textcolor{RedOrange}{0}}{4
\cdot 9}
   +\frac{\textcolor{Green}{5}}{4\cdot 9\cdot 25}
   +\frac{\textcolor{blue}{2}}{4\cdot 9\cdot 25\cdot 49},
   \qquad 0\leq d<4,
   \label{eq:example-dd-replaced-tail}\\
 W_1(d,e)
 &=\frac{d}{9}
   +\frac{\textcolor{Green}{5}}{9\cdot25}
   +\frac{\textcolor{blue}{2}}{9\cdot25\cdot49},
   \qquad 0\leq d<9,
   \label{eq:example-dd-replaced-tail-1}\\
 W_2(d,e)
 &=\frac{d}{25}
   +\frac{\textcolor{blue}{2}}{25\cdot49},
   \qquad 0\leq d<25,
   \label{eq:example-dd-replaced-tail-2}\\
 W_3(d,e)
 &=\frac{d}{49},
   \qquad 0\leq d<49,
   \label{eq:example-dd-replaced-tail-3}\\
 W_i(d,e)
 &=\frac{d}{b_{i+1}},
   \qquad 0\leq d<b_{i+1}\quad(i\geq4).
   \label{eq:example-dd-replaced-tail-higher}
\end{align}

Although these tails are defined at every level, the sums in
\eqref{eq:xi-original} are empty for $i=1$ and for $i\geq4$,
since the corresponding digits $e_i$ vanish.
The tail comparisons in \eqref{eq:local-normal-form} are
\begin{align}
 U_1(e)
 &=\frac{\textcolor{RedOrange}{0}}{9}
   +\frac{\textcolor{Green}{5}}{9\cdot25}
   +\frac{\textcolor{blue}{2}}{9\cdot25\cdot49}
   =\frac{247}{11025}<\frac19<U_1(a),
   \label{eq:example-dd-tail-1}\\
 U_3(e)
 &=\frac{\textcolor{blue}{2}}{49}
   >\frac{1+U_4(a)}{49}=U_3(a),
   \label{eq:example-dd-tail-3}\\
 U_4(e)&=0<U_4(a).
   \label{eq:example-dd-tail-4}
\end{align}
Here $0<U_j(a)<1$ for every $j$. Thus, among the digits
$0\leq d<e_i$, the inequality $W_i(d,e)<U_i(a)$ holds precisely for
$d=0,1$ at level $i=0$, for $d=0$ at level $i=2$, and for
$d=0,1$ at level $i=3$. Equivalently, the additional indicator term in
\eqref{eq:local-normal-form} equals $1$ at levels $i=0,3$ and $0$
at level $i=2$. Consequently,
\begin{align}
 \xi_0(e,y)
 &=2-\textcolor{RubineRed}{2}y
   \approx1.4421764937,
   \label{eq:example-dd-xi-0}\\
 \xi_1(e,y)&=\textcolor{RedOrange}{0},
   \label{eq:example-dd-xi-1}\\
 \xi_2(e,y)
 &=1-\textcolor{Green}{5}U_2(a)
   \approx0.7958844346,
   \label{eq:example-dd-xi-2}\\
 \xi_3(e,y)
 &=2-\textcolor{blue}{2}U_3(a)
   \approx1.9588443455,
   \label{eq:example-dd-xi-3}
\end{align}
and $\xi_i(e,y)=0$ for $i\geq4$.
Since $U_2(a)=36y-10$ and $U_3(a)=900y-251$,
\Cref{lem:digital-decomposition} gives
\begin{equation}\label{eq:example-dd-discrepancy}
 D_{1982}(y)=\sum_{i=0}^{3}\xi_i(e,y)
 =557-1982y\approx4.1969052738.
\end{equation}
Thus exactly $557$ of the points $x_0,\ldots,x_{1981}$ lie in $[0,y)$.
\end{example}

\section{The Haar digit model and an exact martingale decomposition}
Write $m_G$ for Haar probability measure on the compact abelian group $G$.
Recalling \eqref{eq:y-digits} and the cutting-and-stacking construction of
\Cref{sec:intro}, the digit map
\begin{equation}\label{eq:v71-auto-0012}
 [0,1)\ni y\longmapsto a(y)=(a_0(y),a_1(y),a_2(y),\ldots)\in G
\end{equation}
sends Lebesgue measure to $m_G$.  Indeed, prescribing the first $r$ digits
selects a half-open interval of length $B_r^{-1}$.  Consequently, for the
uniform endpoint $Y$ in \eqref{eq:random-statistic}, the digits
$a_i(Y)$ are independent and
\begin{equation}\label{eq:v71-auto-0013}
 a_i(Y)\sim\operatorname{Unif}\{0,\ldots,b_{i+1}-1\}.
\end{equation}
Thus the Haar law of the random endpoint $Y$ digit string is exact.

By contrast, for each fixed horizon $H\geq1$, the random time $N_H=\lfloor HZ\rfloor$ is
uniform on $\{0,\ldots,H-1\}$ and its clock string $c(N_H)$ has finite support.
When $H\geq2$, \eqref{H-sandwiched} shows more
precisely that its digits vanish at every index $i\geq m(H)$.  The next lemma makes precise the sense
in which the law of the clock string $c(N_H)$ nevertheless approaches the
Haar measure $m_G$ as $H\to\infty$. Below, $\TV$ denotes the total variation
distance. Recall the definition \eqref{eq:epsilon-time} of $\eps_{H,r}$.

\begin{lemma}[Haar approximation of random-time digits]
\label{lem:time-digits-Haar}
Let $H,r\geq1$, and write $H=kB_r+s$ with $0\leq s<B_r$.  Then
\begin{equation}\label{eq:time-prefix-TV}
 \TV\left(
  \mathcal L\bigl(c_0(N_H),\ldots,c_{r-1}(N_H)\bigr),
  \bigotimes_{i=0}^{r-1}
  \operatorname{Unif}\{0,\ldots,b_{i+1}-1\}
 \right)
 =\eps_{H,r}. 
\end{equation}
In particular,
\begin{equation}\label{eq:time-digits-weak-Haar}
 \mathcal L\bigl(c(N_H)\bigr)\Longrightarrow m_G
 \qquad \mbox{as}\:\: H\to\infty
\end{equation}
weakly on $G$.  
\end{lemma}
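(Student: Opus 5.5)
The plan is to compute the law of the prefix exactly and then compare it with the product measure directly. The map $n\mapsto (c_0(n),\ldots,c_{r-1}(n))$ is the same as $n\mapsto n \bmod B_r$, read through the mixed-radix digits. It is a bijection from $\{0,\ldots,B_r-1\}$ onto the admissible prefix vectors $\prod_{i=0}^{r-1}\{0,\ldots,b_{i+1}-1\}$. Under this bijection the product of uniform laws becomes the uniform law $u$ on $\mathbb Z/B_r\mathbb Z$. Since total variation distance is invariant under bijections, it suffices to compute $\TV(\mathcal L(N_H \bmod B_r),u)$.

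Write $H=kB_r+s$. Among $\{0,\ldots,H-1\}$, each residue $t<s$ occurs $k+1$ times and each residue $t\geq s$ occurs $k$ times. Hence
\[
\Pp(N_H\equiv t \bmod B_r)=\frac{k+\1_{\{t<s\}}}{H}.
\]
Now $\TV$ is half the $\ell^1$ distance. Using $H=kB_r+s$:
\begin{itemize}
\item for $t<s$ the difference from $1/B_r$ is $\frac{k+1}{H}-\frac1{B_r}=\frac{B_r-s}{HB_r}$, and there are $s$ such residues;
\item for $t\geq s$ the difference has absolute value $\frac{s}{HB_r}$, and there are $B_r-s$ such residues.
\end{itemize}
Half the total absolute difference is therefore $\frac{s(B_r-s)}{HB_r}=\eps_{H,r}$, which is \eqref{eq:time-prefix-TV}. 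This also covers the case $k=0$, i.e. $H<B_r$, with the same formula.

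For weak convergence on $G$, cylinder sets depending on finitely many coordinates form a convergence-determining class; equivalently, continuous functions depending on finitely many coordinates are dense in $C(G)$ by Stone--Weierstrass. So it suffices to show that each finite prefix law converges to the Haar marginal. For fixed $r$, we have $s<B_r$, so $\eps_{H,r}\leq s(B_r-s)/(HB_r)\leq B_r/(4H)\to0$ as $H\to\infty$. Total variation convergence of every $r$-prefix then gives \eqref{eq:time-digits-weak-Haar}.

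I expect no serious obstacle. The only points needing care are the identification of the prefix digits with $n \bmod B_r$, so that the no-carry structure of \eqref{digits_of_integers} gives a bijection, and the reduction from weak convergence on the infinite product to convergence of finite-dimensional marginals.
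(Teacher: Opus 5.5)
Your proposal is correct and follows essentially the same route as the paper: you identify the first $r$ digits with $N_H \bmod B_r$ via the bijection carrying the product of uniform digit laws to the uniform law on $\{0,\ldots,B_r-1\}$, count that $s$ residues occur $k+1$ times and $B_r-s$ occur $k$ times to get $\eps_{H,r}$, and deduce weak convergence from finite-cylinder convergence using $\eps_{H,r}=O(B_r/H)$. Your Stone--Weierstrass justification of the cylinder-set reduction and the sharper bound $\eps_{H,r}\leq B_r/(4H)$ are fine and only cosmetic variations on the paper's argument.
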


\begin{proof}
The first $r$ digits of $N_H$ are in bijection with the residue modulo $B_r$, since
\begin{equation}\label{eq:v71-auto-0014}
 N_H\bmod B_r=\sum_{i=0}^{r-1}c_i(N_H)B_i.
\end{equation}
Moreover, the map $(d_0,\ldots,d_{r-1})\mapsto 
\sum_{i=0}^{r-1}d_iB_i$ is a bijection from
 $\prod_{i=0}^{r-1}\{0,\ldots,b_{i+1}-1\}$
  onto
 $\{0,\ldots,B_r-1\}$, mapping the product measure $\bigotimes_{i=0}^{r-1}
  \operatorname{Unif}\{0,\ldots,b_{i+1}-1\}$ to the uniform measure on $\{0,1,\ldots, B_r-1\}$.
 Hence  we have
\begin{equation}\label{eq:v71-auto-0015}
\begin{aligned}
 &\TV\left(
  \mathcal L\bigl(c_0(N_H),\ldots,c_{r-1}(N_H)\bigr),
  \bigotimes_{i=0}^{r-1}
  \operatorname{Unif}\{0,\ldots,b_{i+1}-1\}
 \right)
\\
 &\qquad =
 \TV\left(
  \mathcal L(N_H\bmod B_r),
  \operatorname{Unif}\{0,\ldots,B_r-1\}
 \right).
\end{aligned}
\end{equation}
Since $H=kB_r+s$ with $0\leq s<B_r$, among the integers
$0,\ldots,H-1$, exactly $s$ residue classes modulo $B_r$ occur $k+1$
times and the remaining $B_r-s$ classes occur $k$ times.  We have
 $\frac{k+1}{H}-\frac1{B_r}=\frac{B_r-s}{HB_r}$ and 
 $\frac1{B_r}-\frac kH=\frac{s}{HB_r}$.
Summing the positive deviations from the uniform law gives a total variation distance of 
\begin{equation}\label{eq:v71-auto-0016}
 s\left(\frac{k+1}{H}-\frac1{B_r}\right)
 =\frac{s(B_r-s)}{HB_r},
\end{equation}
which proves \eqref{eq:time-prefix-TV} after recalling \eqref{eq:epsilon-time}.
For each fixed $r$, this distance is at most $B_r/H$ and hence tends to
zero as $H\to\infty$.  Finite-digit cylinder sets form a convergence-determining class for the product topology on $G$, proving
\eqref{eq:time-digits-weak-Haar}.
\end{proof}

\begin{example}
\label{ex:time-digits-1982}
Return to the radices of \Cref{ex:digital-decomposition}, now taking
$H=1982$. Thus $N_H$ is uniform on $\{0,\ldots,1981\}$, and the
blocks in \eqref{eq:example-dd-blocks} partition its possible values.
The fixed digit string $(2,0,5,2,0,\ldots)$ is now $c(H)$;
the random clock string is $c(N_H)$.

The block partition makes the distribution of the first two digits
particularly transparent. Modulo $B_2=36$, each of the two blocks
of length $900$ contains every residue exactly $25$ times,
and each of the five blocks of length $36$ contains every residue
once. The two singletons, $1980$ and $1981$, have residues $0$ and $1$.
Consequently,
\begin{equation}\label{eq:example-time-residues-36}
 \Pp(N_H\:(\bmod\:36)=j)=
 \begin{cases}
  \tfrac{56}{1982},&j=0,1,\\[4pt]
  \tfrac{55}{1982},&j=2,\ldots,35.
 \end{cases}
\end{equation}
Since each remainder modulo $36$ corresponds uniquely to a pair
of digits through
\begin{equation}\label{eq:example-time-residue-digits}
 N_H
 (\bmod\:36)=c_0(N_H)+4c_1(N_H),
\end{equation}
the probabilities in \eqref{eq:example-time-residues-36} also
specify the joint law of $(c_0(N_H),c_1(N_H))$.

More generally, writing $H=kB_r+s$ with $0\leq s<B_r$,
\Cref{lem:time-digits-Haar} gives the following exact total variation
distances between the first $r$ digits and their product Haar law:
\begin{equation}\label{eq:example-time-TV-table}
\begin{array}{c|r|r|r|c}
 r & B_r & k & s & \eps_{1982,r}\\ \hline
 1 & 4 & 495 & 2
   & \tfrac{1}{1982}\approx0.000505\\[6pt]
 2 & 36 & 55 & 2
   & \tfrac{17}{17838}\approx0.000953\\[6pt]
 3 & 900 & 2 & 182
   & \tfrac{32669}{445950}\approx0.073257\\[6pt]
 4 & 44100 & 0 & 1982
   & \tfrac{21059}{22050}\approx0.955057
\end{array}
\end{equation}
Thus the first two digits are jointly close to independent uniform
digits, while including the fourth digit gives a large total
variation distance. This illustrates why the later coupling argument
approximates a suitable number of initial digits and controls the
remaining levels separately. Note that 
on the full digit space, the distance is $\TV\bigl(\mathcal L(c(N_H)),m_G\bigr)=1$. 
Indeed, the law of $c(N_H)$ is supported on the finite set
$\{c(0),\ldots,c(1981)\}$, which has Haar measure zero.
\end{example}

\begin{remark}[Maximal coupling and the failure probability]
\label{rem:maximal-coupling}
Let $\mu$ and $\nu$ be probability measures on a finite set $S$.  A
\emph{coupling} of $\mu$ and $\nu$ is a pair of random variables $(X,Y)$
defined on a common probability space with $\mathcal L(X)=\mu$ and
$\mathcal L(Y)=\nu$.  The total variation distance can be characterised via
couplings as\footnote{Indeed,
$\TV(\mu,\nu)
 =\frac12\sum_{x\in S}|\mu(x)-\nu(x)|
 =1-\sum_{x\in S}\min\{\mu(x),\nu(x)\}$.
For every coupling, we have
\begin{equation}\label{eq:maximal-coupling-agreement-bound}
 \Pp(X=Y)
 =\sum_{x\in S}\Pp(X=Y=x)
 \leq\sum_{x\in S}\min\{\mu(x),\nu(x)\}.
\end{equation}}
\begin{equation}\label{eq:TV-coupling-characterization}
 \TV(\mu,\nu)
 =\inf_{\substack{\mathcal L(X)=\mu\\ \mathcal L(Y)=\nu}}
   \Pp(X\ne Y),
 \qquad
 1-\TV(\mu,\nu)
 =\sup_{\substack{\mathcal L(X)=\mu\\ \mathcal L(Y)=\nu}}
   \Pp(X=Y).
\end{equation}
A coupling attaining either, and hence both, of the extrema in
\eqref{eq:TV-coupling-characterization} is called a \emph{maximal coupling}.
A construction of such a coupling is useful below, so we make one explicit.

For $x\in S$, put
\begin{equation}\label{eq:maximal-coupling-residual-parts}
 c_x:=\min\{\mu(x),\nu(x)\},\qquad
 \mu^\circ(x):=\mu(x)-c_x,\qquad
 \nu^\circ(x):=\nu(x)-c_x,
\end{equation}
and let
\begin{equation}\label{eq:maximal-coupling-delta}
 \delta:=1-\sum_{x\in S}c_x=\TV(\mu,\nu).
\end{equation}
The two residual measures have the same total mass,
\begin{equation}\label{eq:maximal-coupling-residual-masses}
 \sum_{x\in S}\mu^\circ(x)
 =\sum_{x\in S}\nu^\circ(x)=\delta,
\end{equation}
and their supports are disjoint, since
\begin{equation}\label{eq:maximal-coupling-disjoint-support}
 \mu^\circ(x)\nu^\circ(x)=0
 \qquad\text{for every }x\in S.
\end{equation}
Thus, after placing mass $c_x$ at each diagonal point $(x,x)$, one must
match the remaining first-marginal mass $\mu^\circ$ with the remaining
second-marginal mass $\nu^\circ$ in order to complete the prescribed
marginals.  If $\delta>0$, one possible matching gives $(X,Y)$ the joint law
\begin{equation}\label{eq:maximal-coupling-product-law}
 \pi(x,y):=c_x\1_{\{x=y\}}
 +\frac{\mu^\circ(x)\nu^\circ(y)}{\delta}.
\end{equation}
If $\delta=0$, instead take $X=Y$ with common law $\mu=\nu$.  When
$\delta>0$, the second term in \eqref{eq:maximal-coupling-product-law}
gives only one possible residual matching.  More generally, it may be
replaced by any joint measure whose first residual marginal is
$\mu^\circ$ and whose second residual marginal is $\nu^\circ$.  Such a
matching is generally not unique; nevertheless, disjointness of the residual
supports forces every choice off the diagonal, and hence every choice gives
\begin{equation}\label{eq:maximal-coupling-success-failure}
 \Pp(X=Y)=\sum_{x\in S}c_x=1-\delta,
 \qquad
 \Pp(X\ne Y)=\delta.
\end{equation}
Thus every such residual matching produces a maximal coupling.

To ``maximally couple'' two random variables means to coordinate their joint
law so that their probability of equality is as large as possible; it does
not mean to make them independent.  

We apply the maximal-coupling construction
\eqref{eq:maximal-coupling-residual-parts}--\eqref{eq:maximal-coupling-product-law}, with any choice of residual
matching, to
\begin{equation}\label{eq:maximal-coupling-time-residues}
 R=N_H\bmod B_r,
 \qquad
 R^*\sim\operatorname{Unif}\{0,\ldots,B_r-1\}.
\end{equation}
By \Cref{lem:time-digits-Haar}, a maximal coupling can be chosen so that
\begin{equation}\label{eq:time-maximal-coupling}
 \Pp(R\ne R^*)
 =\TV\left(\mathcal L(R),
       \operatorname{Unif}\{0,\ldots,B_r-1\}\right)
 =\eps_{H,r}.
\end{equation}

For a concrete illustration, take $H=1982$ and $r=2$, as in
\Cref{ex:time-digits-1982}. The law of $R=N_H\bmod36$ is given by
\eqref{eq:example-time-residues-36}: residues $0,1$ each have
probability $56/1982$, while every other residue has probability
$55/1982$.
We construct $R^*$ by keeping $R^*=R$ whenever
$R\in\{2,\ldots,35\}$. When $R\in\{0,1\}$, we keep $R^*=R$
with probability
\begin{equation}\label{eq:example-time-coupling-keep}
 \alpha:=\frac{1/36}{56/1982}=\frac{991}{1008},
\end{equation}
and otherwise choose $R^*$ uniformly from $\{2,\ldots,35\}$.
The probability of changing the residue is therefore
\begin{equation}\label{eq:example-time-coupling-failure}
 \Pp(R\ne R^*)
 =\frac{112}{1982}(1-\alpha)
 =\frac{17}{17838}
 =\eps_{1982,2}.
\end{equation}
To verify that $R^*$ is uniform, observe that $\Pp(R^*=0)=\Pp(R^*=1)=\frac1{36}$ by \eqref{eq:example-time-coupling-keep} and
\begin{align}
 \Pp(R^*=j)
 &=\frac{55}{1982}
   +\frac1{34}\frac{17}{17838}
   =\frac1{36},
   \qquad j=2,\ldots,35.
   \label{eq:example-time-coupling-mass-rest}
\end{align}
Thus $R^*$ has the required uniform law, and the mismatch
probability equals the total variation distance, so this coupling
is maximal. Equivalently, the first two random-time digits agree
simultaneously with their Haar counterparts with probability
$17821/17838\approx0.999047$.

In view of the residue--digit bijection \eqref{eq:v71-auto-0014}, the event
$\{R=R^*\}$ is exactly the event that the first $r$ digits of the random time
agree with the first $r$ Haar digits.  Therefore $\eps_{H,r}$ is the
probability that this maximal coupling fails to make those digit prefixes
agree.  It is also the least possible mismatch probability among all such
couplings; this is the sense in which we call it the \emph{failure
probability}.  For a closely related use of total-variation couplings in a
temporal central limit theorem for substitution dynamics, see Paquette and
Son~\cite[Sec.~5]{PaquetteSon}.
\end{remark}

More generally, if the depth $r=r(H)$ is allowed to grow with $H$, then
\eqref{eq:time-prefix-TV} shows that the first $r(H)$ digits approach
their Haar law in total variation precisely when $\eps_{H,r(H)}\to0$ as $H\to\infty$.
For example, $B_{r(H)}=o(H)$ is sufficient, while the distance is exactly
zero whenever $B_{r(H)}$ divides $H$.

Weak convergence in \eqref{eq:time-digits-weak-Haar} controls every fixed number of initial digits, whereas the
digital decomposition of $\cD_H$ may involve all levels
$0,\ldots,m(H)-1$, with $m(H)\to\infty$. 
We therefore first analyze an idealized model with a Haar clock
string.  The transfer to uniform random times, using
\eqref{eq:time-prefix-TV} with a suitable auxiliary depth
$r=r(H)\leq m(H)$, is carried out in
\Cref{lem:clock-coupling} and \Cref{prop:general-time}, and completed in the proof
of \Cref{thm:main}.

\subsection{The Haar model}
Let $a=(a_i)_{i\geq0}$ and
$e=(e_i)_{i\geq0}$ be two independent Haar-random elements of $G$. In particular, all digits are
independent and
\begin{equation}\label{eq:v71-auto-0017}
 a_i,e_i\sim\operatorname{Unif}\{0,\ldots,b_{i+1}-1\}.
\end{equation}
As discussed above, $a_i$ plays the exact role of $a_i(Y)$  with $Y\sim\operatorname{Unif}[0,1)$, while $e_i$ is only an idealization of $c_i(N_H)$ with $N_H\sim\operatorname{Unif}\{0,1,\ldots, H-1\}$ in the limiting sense of \eqref{eq:time-digits-weak-Haar}.
Recall \eqref{eq:tail-U}. Motivated by \eqref{eq:local-normal-form}, we introduce the random variables
\begin{equation}\label{eq:Xi-product-explicit}
 \Xi_i:=\min(a_i,e_i)
 +\1_{\{a_i<e_i\}}\1_{\{U_{i+1}(e)<U_{i+1}(a)\}}
 -\frac{e_i(a_i+U_{i+1}(a))}{b_{i+1}},\quad i\ge0.
\end{equation}
 Define
\begin{equation}\label{eq:X-psi-filtration}
 X_i:=\Xi_i-\E[\Xi_i],
 \qquad
 \psi_i:=\1_{\{U_i(e)<U_i(a)\}}-U_i(a),
 \qquad
 \cF_i:=\sigma(a_j,e_j:j\geq i).
\end{equation}
The next lemma gathers several explicit computations.
\begin{lemma}\label{lem:one-level}
For every $i\geq0$, 
\begin{align}
 \E[\Xi_i]&=\mu(b_{i+1}),\label{eq:one-mean}\\
 \Var(\Xi_i)&=
 \frac{3b_{i+1}^4+70b_{i+1}^2-60b_{i+1}-13}{720\,b_{i+1}^2},\label{eq:one-var}\\
 \E[X_i\mid\cF_{i+1}]
 &=\frac{b_{i+1}-1}{2\,b_{i+1}}\psi_{i+1},\label{eq:conditional-X}\\
 \E[\psi_i\mid\cF_{i+1}]
 &=\frac{1}{b_{i+1}}\psi_{i+1},\label{eq:conditional-psi}\\
 \E[\psi_iX_i]&=-\frac{b_{i+1}-1}{12\,b_{i+1}}.
 \label{eq:psi-X}\\
 \E[\psi_i^2]&=\frac16.
 \label{eq:psi-second-moment}
\end{align}
\end{lemma}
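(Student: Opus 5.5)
The key structural fact is that, conditionally on $\cF_{i+1}$, the digits $a_i,e_i$ are independent uniform on $\{0,\ldots,b-1\}$ (write $b:=b_{i+1}$) and independent of the pair $(U:=U_{i+1}(a),\,V:=U_{i+1}(e))$, which is $\cF_{i+1}$-measurable. Moreover $U$ and $V$ are independent. $U$ is exactly uniform on $[0,1)$, because the tail digits of $a$ are Haar and the tail digit map pushes Haar to Lebesgue, exactly as for $a(Y)$. The same holds for $V$, since $e$ is Haar. So every identity reduces to a finite sum over $(a_i,e_i)$ followed by integration in $(U,V)$ over $[0,1)^2$. Write $I:=\1_{\{V<U\}}$, so that $\psi_{i+1}=I-U$ and
\[
\Xi_i=\min(a_i,e_i)+\1_{\{a_i<e_i\}}I-\frac{e_i(a_i+U)}{b}.
\]

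First I will prove \eqref{eq:conditional-X} and \eqref{eq:conditional-psi}, since \eqref{eq:one-mean} and \eqref{eq:psi-X} follow from them. Averaging over $(a_i,e_i)$ uses three elementary sums: $\E\min(a_i,e_i)=\frac{(b-1)(2b-1)}{6b}$, $\Pp(a_i<e_i)=\frac{b-1}{2b}$ and $\E[e_i(a_i+U)]=\frac{b-1}{2}\bigl(\frac{b-1}{2}+U\bigr)$. These give
\[
\E[\Xi_i\mid\cF_{i+1}]=\text{const}+\frac{b-1}{2b}(I-U).
\]
Taking expectations, and using $\E[I-U]=\tfrac12-\tfrac12=0$, identifies the constant as $\E\Xi_i$; this yields \eqref{eq:conditional-X}. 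A direct simplification of the constant should give $\frac{b^2-1}{12b}=\mu(b)$, which is \eqref{eq:one-mean}.

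For $\psi_i$, use $U_i(a)=(a_i+U)/b$ and $U_i(e)=(e_i+V)/b$. The event $\{U_i(e)<U_i(a)\}$ is $\{e_i<a_i\}\cup\{e_i=a_i,\,V<U\}$. Hence
\[
\E[\psi_i\mid\cF_{i+1}]=\frac{b-1}{2b}+\frac{I}{b}-\frac{(b-1)/2+U}{b}=\frac{I-U}{b}.
\]

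For \eqref{eq:psi-second-moment}, $U_i(a)$ and $U_i(e)$ are themselves independent uniforms. Thus
\[
\E\psi_i^2=\E[\1_{\{U_i(e)<U_i(a)\}}]-2\E[\1_{\{U_i(e)<U_i(a)\}}U_i(a)]+\E[U_i(a)^2]=\tfrac12-\tfrac23+\tfrac13=\tfrac16.
\]

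For \eqref{eq:psi-X} I will compute $\E[\psi_iX_i]=\E[\psi_i\Xi_i]$ directly, since $\E\psi_i=0$. Condition on $\cF_{i+1}$ and compute $\E[\psi_i\Xi_i\mid\cF_{i+1}]$ as a polynomial in $I$ and $U$, noting $I^2=I$. The products of indicators combine as follows. On $\{e_i<a_i\}$ we have $\1_{\{a_i<e_i\}}=0$. On $\{e_i=a_i\}$ the $\psi$-indicator is $I$ and $\1_{\{a_i<e_i\}}=0$. So only terms of the form $\min(a_i,e_i)\cdot\1$, together with the cross terms with $U$, survive, and each reduces to a double sum over digits. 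Then integrate using $\E I=\tfrac12$, $\E[IU]=\tfrac13$, $\E U=\tfrac12$, $\E U^2=\tfrac13$.

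Variance \eqref{eq:one-var} is the same procedure applied to $\E[\Xi_i^2\mid\cF_{i+1}]$. The required inputs are the second moments $\E\min(a_i,e_i)^2$, $\E[\min(a_i,e_i)\,e_ia_i]$, $\E[e_i^2a_i^2]$, and sums of the type $\E[\min(a_i,e_i)\1_{\{a_i<e_i\}}]=\E[a_i\1_{\{a_i<e_i\}}]$, each computed via $\sum_k k^p$. These are then integrated against $I$ and $U$.

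The main obstacle is purely bookkeeping in \eqref{eq:one-var}. Many digit moments must be combined into the stated rational function in $b$, and an error is easy to make. I would guard against this by checking the final formula at $b=2$ and $b=3$, where the expectation can be enumerated by hand, before trusting the algebra.
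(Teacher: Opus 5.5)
Your proposal is correct and is essentially the paper's argument. You condition on $\cF_{i+1}$, average over the two independent uniform digits using elementary power sums, and integrate over the independent uniforms $U_{i+1}(a)$ and $U_{i+1}(e)$. Your derivations of \eqref{eq:conditional-X}, \eqref{eq:conditional-psi} and \eqref{eq:psi-second-moment} check out, as does reading off $\E[\Xi_i]=\mu(b)$ from the constant term of $\E[\Xi_i\mid\cF_{i+1}]$.

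The only differences are in the order of the Fubini computations. The paper obtains the mean and the second moment by conditioning on $(a_i,e_i)$ first. For \eqref{eq:psi-X} it conditions on $(a_i,U_{i+1}(a))$. There $\E[\psi_i\mid a_i,U_{i+1}(a)]=0$, and $\Xi_i=e_i(1-U_i(a))$ on $\{U_i(e)<U_i(a)\}$, so only a single sum over $e_i$ remains. That is somewhat lighter than your double-sum expansion in $(I,U)$.
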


\begin{proof}
We again use the shorthands in \eqref{eq:v71-auto-0268}, together with
$J=\1_{\{V<U\}}$. Hence $\Xi_i=\min(A,E)+\1_{\{A<E\}}J-\frac{E(A+U)}B$.
The indicator in the definition of $\psi_i$ can be written using \eqref{Ui_and_Ui+1} as
\begin{equation}\label{eq:current-comparison-K}
 K:=\1_{\{U_i(e)<U_i(a)\}}
 =\1_{\{E+V<A+U\}}
 =\1_{\{E<A\}}+\1_{\{E=A\}}J.
\end{equation}
Indeed, if $E<A$ the inequality holds for every $U,V\in[0,1)$, if
$E>A$ it never holds, and if $E=A$ it holds  precisely when $V<U$.  
We use \eqref{eq:current-comparison-K} to rewrite
\begin{equation}\label{eq:psi-short}
 \psi_i=K-\frac{A+U}{B},
 \qquad
 \psi_{i+1}=J-U.
\end{equation}
Note that $U,V$ are
independent uniform variables and are independent of $A,E$.  In particular,
\begin{equation}\label{EU_EJ_EU^2_EUJ}
 \E[U]=\E[J]=\frac12,
 \qquad \E[U^2]=\frac13,
 \qquad \E[UJ]=\int_0^1u\Pp(V<u)\,du=\int_0^1u^2\,du=\frac13.
\end{equation}
Conditioning first on $A,E$ and using  $J^2=J$ along with \eqref{EU_EJ_EU^2_EUJ}, we get
\begin{align}
 \E[\Xi_i\mid A,E]
 &=\begin{cases}
 (1-E/B)(A+\tfrac12),&A<E,\\
 E\bigl(1-(A+\tfrac12)/B\bigr),&A\geq E,
 \end{cases}\label{eq:v71-auto-0190}\\
 \E[\Xi_i^2\mid A,E]
 &=\begin{cases}
 (1-E/B)^2(A^2+A)+\tfrac12-\dfrac{2E}{3B}
       +\dfrac{E^2}{3B^2},&A<E,\\
 E^2\left\{(1-A/B)^2-\dfrac1B(1-A/B)
       +\dfrac1{3B^2}\right\},&A\geq E.
 \end{cases}\label{eq:v71-auto-0191}
\end{align}
Indeed, on $A<E$ we have
$\Xi_i=A(1-E/B)+J-(E/B)U$, while on $A\geq E$ we have
$\Xi_i=E(1-A/B)-(E/B)U$.

For fixed $E=e$, summing the first conditional moment over $A$ yields
\begin{align}
 \sum_{A=0}^{B-1}\E[\Xi_i\mid A,E=e]
 &=\frac{B-e}{B}\sum_{A=0}^{e-1}(A+\tfrac12)
   +e\sum_{A=e}^{B-1}\left(1-\frac{A+1/2}{B}\right)=\frac{e(B-e)}2.\label{eq:v71-auto-0192}
\end{align}
Similarly, after putting $k=B-A$ in the second region,
\begin{align}
 &\sum_{A=0}^{B-1}\E[\Xi_i^2\mid A,E=e]\label{eq:v71-auto-0193}\\
 &=\frac{(B-e)^2}{B^2}\sum_{A=0}^{e-1}(A^2+A)
   +e\left(\frac12-\frac{2e}{3B}+\frac{e^2}{3B^2}\right)+\frac{e^2}{B^2}\sum_{k=1}^{B-e}
       \left(k^2-k+\frac13\right)
 \label{sum-conditional-second-moment-Xi}
\end{align}
Using the identities 
$\displaystyle\sum_{A=0}^{e-1}(A^2+A)=\tfrac{e(e^2-1)}{3}$ and
$\displaystyle\sum_{k=1}^{B-e}(k^2-k+\tfrac{1}{3})=\tfrac{(B-e)^3}{3}$, we simplify 
\eqref{sum-conditional-second-moment-Xi} to 
$\frac e6+\frac{e^2(B-e)^2}{3B}$.
Consequently, summing over $e$ and using standard power-sum identities, we get
\begin{align}
 \E[\Xi_i]
 &=\frac{1}{B^2}\sum_{e=0}^{B-1}\frac{e(B-e)}{2}
   =\frac{B^2-1}{12B},\label{eq:v71-auto-0194}\\
 \E[\Xi_i^2]
 &=\frac{1}{B^2}\sum_{e=0}^{B-1}\left(\frac e6
       +\frac{e^2(B-e)^2}{3B}\right)=\frac{(B-1)(2B^3+2B^2+17B+2)}{180 B^2}.\label{eq:v71-auto-0195}
\end{align}
These identities prove \eqref{eq:one-mean} and \eqref{eq:one-var}.

We now condition on $\cF_{i+1}$.  This sigma-algebra fixes $U,V$, and hence
$J$, while $A,E$ remain independent uniform digits.  
The elementary digit averages $\tfrac1{B^2}\sum_{A,E=0}^{B-1}\min(A,E)
 =\tfrac{(B-1)(2B-1)}{6B}$, $\tfrac1{B^2}\sum_{A,E=0}^{B-1}\1_{\{A<E\}}=\tfrac{B-1}{2B}$,  and $\frac1{B^2}\sum_{A,E=0}^{B-1}\tfrac{E(A+U)}B
 =\tfrac{(B-1)^2}{4B}+\tfrac{(B-1)U}{2B}$
give
\begin{equation}\label{eq:v71-auto-0019}
 \E[\Xi_i\mid\cF_{i+1}]
 =\mu(B)+\frac{B-1}{2B}(J-U)
\end{equation}
and using \eqref{eq:psi-short} and \eqref{eq:one-mean} we immediately get \eqref{eq:conditional-X}.
  Similarly, $\frac1{B^2}\sum_{A,E=0}^{B-1}K
 =\frac{B-1}{2B}+\frac JB$ and  $\frac1{B^2}\sum_{A,E=0}^{B-1}\frac{A+U}{B}
 =\frac{B-1}{2B}+\frac UB$ yield 
\begin{equation}\label{eq:v71-auto-0020}
 \E[\psi_i\mid\cF_{i+1}]
 =\frac{J-U}{B}
 =\frac1B\psi_{i+1},
\end{equation}
which is \eqref{eq:conditional-psi}.

Next we compute the mixed moment $\E[\psi_i X_i]$.  Condition on $A,U$ and put $w=(A+U)/B=U_i(a)$.  Then $U_i(e)$ is
independent and uniform on $[0,1)$, while $K=\1_{\{U_i(e)<w\}}$ and
$\psi_i=K-w$.  Hence $\E[\psi_i\mid A,U]=0$. Averaging first over
$E$ and $V$ gives
\begin{align}
 \E[\Xi_i\mid A,U]
 &=\frac1B\sum_{e=0}^{B-1}
   \left\{\min(A,e)+\1_{\{A<e\}}U-ew\right\}\label{eq:v71-auto-0196}\\
 &=\frac1B\left(\frac{A(A-1)}2+A(B-A)+(B-A-1)U
       -\frac{B(B-1)}2w\right)\label{eq:v71-auto-0197}\\
 &=\frac{A(B-A)+(B-2A-1)U}{2B}.\label{eq:v71-auto-0198}
\end{align}
On the event $\{K=1\}$ one has $\Xi_i=E(1-w)$.  Therefore
\begin{equation}\label{eq:v71-auto-0021}
 \E[K\Xi_i\mid A,U]
 =\frac{1-w}{B}\left(\sum_{e=0}^{A-1}e+AU\right)
 =\frac{A(A-1+2U)(B-A-U)}{2B^2}.
\end{equation}
Subtracting $w\E[\Xi_i\mid A,U]$ now yields
\begin{equation}\label{eq:v71-auto-0022}
 \E[\psi_iX_i\mid A,U]
 =\frac{-A(B-A)+2AU-(B-1)U^2}{2B^2}.
\end{equation}
Averaging over the independent uniform digit $A$ and uniform variable $U$,
and using
the identity
 $\sum_{A=0}^{B-1}A(B-A)=\frac{B(B-1)(B+1)}6$, we conclude that
\begin{align}
 \E[\psi_iX_i]
 &=\frac1{2B^3}\sum_{A=0}^{B-1}
   \left(-A(B-A)+A-\frac{B-1}{3}\right)=-\frac{B-1}{12B},\label{eq:v71-auto-0199}
\end{align}
which proves \eqref{eq:psi-X}.

Finally, conditional on $U_i(a)=u$, the indicator
$\1_{\{U_i(e)<U_i(a)\}}$ is Bernoulli with parameter $u$.  Hence
\begin{equation}\label{eq:v71-auto-0023}
 \E[\psi_i^2]
 =\int_0^1\left[u(1-u)^2+(1-u)u^2\right]du=\frac16,
\end{equation}
proving \eqref{eq:psi-second-moment}.
\end{proof}

Our analysis will provide a central limit theorem for the sequence of partial sums of the variable $\Xi_i$'s.
For $m\geq1$, let us define
\begin{equation}\label{eq:Pm-def}
 \mathcal P_m:=\sum_{i=0}^{m-1}\Xi_i.
\end{equation} 
It follows immediately from \eqref{eq:one-mean} and  \eqref{eq:M_and_T_and_V} that 
\begin{equation}\label{eq:product-exact-mean}
 \E\left[\mathcal P_m\right]=M_m.
\end{equation}

By \eqref{eq:conditional-X}, the centred random variables
$X_i=\Xi_i-\mu(b_{i+1})$ are not martingale differences; neither are the
$\Xi_i$. In the next proposition, we show that we can nevertheless decompose
their partial sums as a combination of a reverse\footnote{Note that
\eqref{eq:X-psi-filtration} gives $\cF_{i+1}\subseteq\cF_i$ and hence we have
a reverse filtration.}-martingale-difference sum and a coboundary.
Define
\begin{equation}\label{eq:d-def}
 d_i=X_i+\frac12(\psi_i-\psi_{i+1}),\quad i\geq0, 
\end{equation} so that 
\begin{equation}\label{eq:martingale-coboundary}
 \sum_{i=0}^{m-1}X_i
 =\sum_{i=0}^{m-1}d_i+\frac12(\psi_m-\psi_0),\quad m\geq1.
\end{equation}

\begin{proposition}[Martingale--coboundary decomposition]\label{prop:martingale-decomp}
The sequence $(d_i,\mathcal F_i)_{i\geq0}$ is a reverse
martingale-difference sequence. That is, for every $i\geq0$,
\begin{equation}\label{eq:reverse-MD}
 d_i\text{ is }\mathcal F_i\text{-measurable},\qquad
 \E[d_i^2]<\infty,\qquad
 \E[d_i\mid\mathcal F_{i+1}]=0.
\end{equation}
Moreover, with
\begin{equation}\label{eq:G-def}
 G_i:=U_i(a)^2+\1_{\{U_i(e)<U_i(a)\}}(1-2U_i(a)),
\end{equation}
we have
\begin{equation}\label{eq:conditional-d2}
 \E[d_i^2\mid\cF_{i+1}]
 =\frac{b_{i+1}^2-1}{240b_{i+1}^2}
  \left(b_{i+1}^2+1+20G_{i+1}\right).
\end{equation}
In particular,
\begin{equation}\label{eq:d-variance}
 \E[d_i^2]=\tau^2(b_{i+1}).
\end{equation}
\end{proposition}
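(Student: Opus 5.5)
Measurability and integrability are immediate. $X_i$ is a function of $a_i,e_i,U_{i+1}(a),U_{i+1}(e)$, all of which are $\cF_i$-measurable. Likewise $\psi_i$ and $\psi_{i+1}$ are $\cF_i$-measurable, since $U_i(\cdot)$ and $U_{i+1}(\cdot)$ depend only on digits of index at least $i$. Boundedness follows from $|\Xi_i|\le b_{i+1}-1$ by \eqref{eq:xi-bound} and $|\psi_j|\le 1$, so $\E[d_i^2]<\infty$.

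For the martingale-difference property I would take conditional expectations in \eqref{eq:d-def} and use \Cref{lem:one-level}. Since $\psi_{i+1}$ is $\cF_{i+1}$-measurable,
\[
\E[d_i\mid\cF_{i+1}]=\frac{b_{i+1}-1}{2b_{i+1}}\psi_{i+1}+\frac{1}{2b_{i+1}}\psi_{i+1}-\frac12\psi_{i+1}=0,
\]
by \eqref{eq:conditional-X} and \eqref{eq:conditional-psi}.

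For \eqref{eq:conditional-d2} I would keep the shorthands $A,E,B,U,V,J$ of \Cref{lem:one-level}, so that $\psi_{i+1}=J-U$. Conditioning on $\cF_{i+1}$ fixes $U$ and $J$ and leaves $A,E$ independent uniform digits. Write $d_i=Y-\tfrac12(J-U)$ with $Y:=X_i+\tfrac12\psi_i$. Since $\E[d_i\mid\cF_{i+1}]=0$, we have $\E[Y\mid\cF_{i+1}]=\tfrac12(J-U)$, and hence
\[
\E[d_i^2\mid\cF_{i+1}]=\E[Y^2\mid\cF_{i+1}]-\tfrac14(J-U)^2.
\]

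Everything then reduces to a finite average over $(A,E)\in\{0,\dots,B-1\}^2$ of the explicit expression
\[
Y=\min(A,E)+\1_{\{A<E\}}J-\frac{E(A+U)}{B}-\mu(B)+\tfrac12\Bigl(\1_{\{E<A\}}+\1_{\{E=A\}}J-\frac{A+U}{B}\Bigr),
\]
a polynomial in $U$ and $J$ once $J^2=J$ is used. I would split into the three regions $A<E$, $A>E$ and $A=E$. Within each region $Y$ is affine in $(U,J)$ with coefficients polynomial in $A,E$, so $\E[Y^2\mid\cF_{i+1}]$ is a quadratic form in $1,U,J$ with $UJ$ and $U^2$ terms. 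Using power sums I would collect the coefficients of $1$, $U$, $U^2$, $J$ and $UJ$. The target identity says the result must combine, after subtracting $\tfrac14(J-U)^2=\tfrac14(J-2UJ+U^2)$, into
\[
\text{const}+\frac{b^2-1}{12b^2}\bigl(U^2+J(1-2U)\bigr),
\]
so the coefficient of $U$ alone must vanish and the coefficients of $U^2$, $J$ and $-2UJ$ must coincide. These constraints serve as checks on the algebra. This bookkeeping is the main obstacle. To keep it manageable I would first compute $\E[Y^2\mid A,E,U,J]$ symbolically and sum over $A$ for fixed $E$, as in the proof of \Cref{lem:one-level}, and only then sum over $E$.

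Finally, \eqref{eq:d-variance} follows by taking expectations in \eqref{eq:conditional-d2}. Using \eqref{EU_EJ_EU^2_EUJ}, $\E[G_{i+1}]=\tfrac13+\tfrac12-\tfrac23=\tfrac16$, so
\[
\E[d_i^2]=\frac{(b^2-1)(b^2+13/3)}{240b^2}=\frac{3b^4+10b^2-13}{720b^2}=\tau^2(b).
\]
As an independent check on the constant term, the same value comes from expanding $\E[d_i^2]$ with \eqref{eq:one-var}, \eqref{eq:psi-X} and \eqref{eq:psi-second-moment}. The needed cross terms are $\E[X_i\psi_{i+1}]=\frac{b-1}{2b}\cdot\frac16$ by \eqref{eq:conditional-X}, and $\E[\psi_i\psi_{i+1}]=\frac1{6b}$ by \eqref{eq:conditional-psi}.
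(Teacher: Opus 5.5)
Your proposal is correct and follows the paper's proof. You obtain the martingale-difference property from \eqref{eq:conditional-X} and \eqref{eq:conditional-psi}, then take a finite average over $(A,E)$ with $U$ and $J$ frozen, and finally average $G_{i+1}$. Your route through $Y=X_i+\tfrac12\psi_i$ and the subtraction of $\tfrac14(J-U)^2$ is equivalent to the paper's split $d_i=(g_{A,E}-\mu(B))+Uq_E+\tfrac J2 s_{A,E}$ into six averaged products. The one thing missing is that you describe, rather than carry out, the power-sum evaluations that actually establish \eqref{eq:conditional-d2}.

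Your ``independent check'' via \Cref{lem:one-level} is already a complete proof of \eqref{eq:d-variance}. It also saves work on \eqref{eq:conditional-d2}. Once you compute the coefficients of $U$, $U^2$, $J$ and $UJ$ (which must be $0,\kappa,\kappa,-2\kappa$ with $\kappa=\frac{B^2-1}{12B^2}$), taking expectations gives the constant term $\frac{B^4-1}{240B^2}=\tau^2(B)-\kappa/6$. So the most laborious sum can be skipped.
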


\begin{proof}
Fix $i\geq0$. We again use the shorthands in \eqref{eq:v71-auto-0268}, together with $J=\1_{\{V<U\}}$.
The variables $X_i$, $\psi_i$, and $\psi_{i+1}$ are bounded and
$\cF_i$-measurable.  Hence so is $d_i$.  Using
\eqref{eq:conditional-X} and \eqref{eq:conditional-psi}, we obtain
\begin{equation}\label{eq:v71-auto-0024}
 \E[d_i\mid\cF_{i+1}]
 =\left(\frac{B-1}{2B}+\frac1{2B}-\frac12\right)\psi_{i+1}=0
\end{equation}
and \eqref{eq:reverse-MD} is proven.

For \eqref{eq:conditional-d2},
let us condition on
$\cF_{i+1}$.  This fixes $U,V$, and $J$, while $A,E$ remain independent
uniform digits in $\{0,\ldots,B-1\}$.  Equations
\eqref{eq:Xi-product-explicit}, \eqref{eq:X-psi-filtration},
\eqref{eq:one-mean}, \eqref{eq:psi-short}, and \eqref{eq:d-def} give
\begin{equation}\label{d_i_cases}
 d_i=
 \begin{cases}
 A-\mu(B)+\dfrac{J+U}{2}
   -\dfrac{(E+\tfrac12)(A+U)}{B},&A<E,\\[4pt]
 A-\mu(B)+\dfrac{U}{2}
   -\dfrac{(A+\tfrac12)(A+U)}{B},&A=E,\\[4pt]
 E-\mu(B)+\dfrac{1-J+U}{2}
   -\dfrac{(E+\tfrac12)(A+U)}{B},&A>E.
 \end{cases}
\end{equation}
To organise the computation of the conditional second moment of $d_i$, we introduce
\begin{align}
 g_{A,E}&:=\min\!\left(A,E+\frac12\right)
       -\frac{A(E+\tfrac12)}{B},\label{eq:v71-auto-0200}\\
 q_E&:=\frac12-\frac{E+\tfrac12}{B},\qquad
 s_{A,E}:=\1_{\{A<E\}}-\1_{\{E<A\}},\label{eq:v71-auto-0201}
\end{align}
so that \eqref{d_i_cases} becomes $d_i=(g_{A,E}-\mu(B))+Uq_E+\frac J2s_{A,E}$. We treat $d_i$ as a sum of \emph{three} terms (and hence $d_i^2$ will be written as a sum of six terms, see \eqref{eq:v71-auto-0202}-\eqref{eq:v71-auto-0203} below).
A direct computation gives
 
\begin{align}
 \sum_{A=0}^{B-1}g_{A,E}
 &=\frac{B-E-\tfrac12}{B}\sum_{A=0}^{E}A
   +\frac{E+\tfrac12}{B}\sum_{A=E+1}^{B-1}(B-A)=\frac{B-1}{4}+\frac{E(B-1-E)}2,\label{sum_over_A_of_g_AE}
\end{align}
implying 
\begin{align}
\label{sum_over_AE_of_g_AE}
 \sum_{A,E=0}^{B-1}g_{A,E}&=B^2\mu(B).
\end{align}
Similarly,
\begin{align}
 \sum_{A,E=0}^{B-1}g_{A,E}^2 &
 =\frac{(B^2-1)(4B^2-1)}{360}.\label{sum_g_AE^2}
\end{align}
It follows from \eqref{sum_over_AE_of_g_AE} and \eqref{sum_g_AE^2} that
\begin{equation}
 \frac{1}{B^2}\sum_{A,E=0}^{B-1}(g_{A,E}-\mu(B))^2
 =\frac{(B^2-1)(4B^2-1)}{360B^2}-\frac{2}{B^2}\mu(B)\sum_{A,E=0}^{B-1}g_{A,E}+\mu(B)^2
 =\frac{B^4-1}{240B^2}.\label{eq:avg(g_AE-mu(B))^2}
\end{equation}
Note that \eqref{sum_over_A_of_g_AE}  is invariant under
$E\mapsto B-1-E$, whereas $q_{B-1-E}=-q_E$.  Therefore
\begin{equation}\label{eq:avg(g_AE-mu(b))q_E}
 \frac{1}{B^2}\sum_{A,E=0}^{B-1}(g_{A,E}-\mu(B))q_E
 =
 0.
\end{equation}
Trivially, $\sum_{A,E=0}^{B-1}s_{A,E}=0$. Further computations give
\begin{align}
   & \frac{1}{B^2}\sum_{A,E=0}^{B-1}(g_{A,E}-\mu(B))s_{A,E}=  \frac{1}{B^2}\sum_{A,E=0}^{B-1}g_{A,E}s_{A,E}=-\frac{(B-1)(2B-1)}{12B^2},\label{eq:avg(g_AE-mu(B))s_AE}\\
    &\frac{1}{B^2}\sum_{A,E=0}^{B-1} q_{E}^2=\frac{B^2-1}{12B^2},\label{eq:avgq_E^2}\\
    &\frac{1}{B^2}\sum_{A,E=0}^{B-1} s_{A,E}^2=\frac{B-1}{B},\label{eq:avgs_AE^2}\\
    &\frac{1}{B^2}\sum_{A,E=0}^{B-1} q_E s_{A,E}=-\frac{2}{B^2}\sum_{A,E=0}^{B-1} q_E^2=-\frac{B^2-1}{6B^2}.\label{eq:avgq_E*s_AE}
\end{align}
Combining \eqref{eq:avg(g_AE-mu(B))^2}--\eqref{eq:avgq_E*s_AE} and using the fact that $J^2=J$, we have
\begin{align}
\E[d_i^2\mid\cF_{i+1}]
 &=\frac{1}{B^2}\sum_{A,E=0}^{B-1}\left((g_{A,E}-\mu(B))^2+2U (g_{A,E}-\mu(B))q_E\right.\label{eq:v71-auto-0202}\\
 &\left.\qquad\qquad\qquad +J(g_{A,E}-\mu(B))s_{A,E}+U^2 q_E^2+UJq_E s_{A,E}+\frac{J}{4}s_{A,E}^2\right)\label{eq:v71-auto-0203}\\
 &=\frac{B^4-1}{240B^2}+0
   -J\frac{(B-1)(2B-1)}{12B^2}+U^2\frac{B^2-1}{12B^2}-UJ\frac{B^2-1}{6B^2}+J\frac{B-1}{4B}\label{eq:v71-auto-0204}\\
 &=\frac{B^2-1}{240B^2}
   \left(B^2+1+20\{U^2+J(1-2U)\}\right).\label{eq:v71-auto-0205}
\end{align}
The expression in braces is $G_{i+1}$, proving
\eqref{eq:conditional-d2}.
To obtain \eqref{eq:d-variance}, it remains to average this identity over $U, V$ (independent uniform random variables on $[0,1)$). We get
\begin{equation}\label{eq:v71-auto-0025}
 \E[G_i]
 =\E[U_i(a)^2]+\int_0^1u(1-2u)\,du
 =\frac13-\frac16=\frac16.
\end{equation}
Consequently,
\begin{equation}\label{eq:v71-auto-0026}
 \E[d_i^2]
 =\frac{B^2-1}{240B^2}
   \left(B^2+1+\frac{20}{6}\right)
 =\frac{3B^4+10B^2-13}{720B^2}
 =\tau^2(B),
\end{equation}
proving \eqref{eq:d-variance}.
\end{proof}

We now compute the covariance structure of the $\Xi_i$'s from  \eqref{eq:Xi-product-explicit}. Recall  \eqref{eq:Pm-def}, \eqref{eq:M_and_T_and_V}, and \eqref{eq:SQDelta}.

\begin{proposition}\label{prop:exact-covariance}
For $0\leq i<j$,
\begin{equation}\label{eq:Xi-covariance}
 \Cov(\Xi_i,\Xi_j)=
 -\frac{(b_{i+1}-1)(b_{j+1}-1)}
 {24\, b_{i+1}\cdots b_{j+1}
}.
\end{equation}
Furthermore, for every $m\geq1$,
\begin{equation}\label{eq:product-exact-var}
 \Var\left(\mathcal P_m\right)=V_m
\end{equation}
and we have the bounds \begin{equation}\label{eq:variance-comparison}
 \frac1{240}S_m\leq V_m\leq\frac1{36}S_m. 
\end{equation}
\end{proposition}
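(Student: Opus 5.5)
For the covariance \eqref{eq:Xi-covariance} I will iterate conditional expectations along the reverse filtration. Since $\Xi_j$ is $\cF_j$-measurable and $\cF_j\subseteq\cF_{i+1}$ for $j\geq i+1$, we have $\Cov(\Xi_i,\Xi_j)=\E[X_iX_j]=\E\bigl[\E[X_i\mid\cF_j]\,X_j\bigr]$. First I apply \eqref{eq:conditional-X}, which gives $\E[X_i\mid\cF_{i+1}]=\frac{b_{i+1}-1}{2b_{i+1}}\psi_{i+1}$. Next I apply \eqref{eq:conditional-psi} repeatedly, which gives
\[
\E[X_i\mid\cF_j]=\frac{b_{i+1}-1}{2b_{i+1}}\cdot\frac{1}{b_{i+2}\cdots b_j}\,\psi_j .
\]
When $j=i+1$ the product is empty. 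Then \eqref{eq:psi-X} gives $\E[\psi_jX_j]=-\frac{b_{j+1}-1}{12b_{j+1}}$. Multiplying these factors yields exactly $-\frac{(b_{i+1}-1)(b_{j+1}-1)}{24\,b_{i+1}\cdots b_{j+1}}$.

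For \eqref{eq:product-exact-var}, the cleanest route is the decomposition \eqref{eq:martingale-coboundary} rather than summing the covariances directly. Write $\mathcal P_m-M_m=\sum_{i<m}d_i+\frac12(\psi_m-\psi_0)$. The $d_i$ are orthogonal reverse martingale differences, so $\E[(\sum d_i)^2]=T_m$ by \eqref{eq:d-variance}. The coboundary contributes $\frac14\E[(\psi_m-\psi_0)^2]$. By \eqref{eq:psi-second-moment} and iterating \eqref{eq:conditional-psi}, $\E[\psi_0\psi_m]=B_m^{-1}\E[\psi_m^2]=\frac{1}{6B_m}$, so this contribution equals $\frac14\bigl(\frac13-\frac{1}{3B_m}\bigr)=\frac{1-B_m^{-1}}{12}$.

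The main bookkeeping step is the cross term $\E[\sum_i d_i(\psi_m-\psi_0)]$. Since $\psi_m$ is $\cF_{i+1}$-measurable for $i<m$, we have $\E[d_i\psi_m]=0$. For $\E[d_i\psi_0]$, condition on $\cF_i$. Iterating \eqref{eq:conditional-psi} from level $0$ is not possible directly, because $\psi_0$ is not a function only of higher levels. Instead I use the identity $\E[d_i\psi_0]=\E[d_i\,\E[\psi_0\mid\cF_i]]=\frac{1}{B_i}\E[d_i\psi_i]$. Then $\E[d_i\psi_i]=\E[X_i\psi_i]+\frac12\E[\psi_i^2]-\frac12\E[\psi_i\psi_{i+1}]$, which equals
\[
-\frac{b_{i+1}-1}{12b_{i+1}}+\frac1{12}-\frac{1}{12b_{i+1}}=0 .
\]
So all cross terms vanish, and $\Var(\mathcal P_m)=T_m+\frac{1-B_m^{-1}}{12}=V_m$. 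As a sanity check, this can be compared with summing \eqref{eq:one-var} and twice the covariances.

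For the bounds \eqref{eq:variance-comparison}, set $b=b_{j}\geq2$. The per-level term satisfies $\tau^2(b)=\frac{3b^4+10b^2-13}{720b^2}$, and one checks elementarily that $\frac{b^2}{240}\leq\tau^2(b)$, since this is equivalent to $10b^2\geq13$. This gives the lower bound $V_m\geq T_m\geq S_m/240$. For the upper bound, note $\frac{1-B_m^{-1}}{12}<\frac1{12}\leq\frac{b_1^2}{48}$. It then suffices to show $\tau^2(b)\leq\frac{b^2}{36}-c$ with enough slack to absorb this term at the first level. For $b\geq2$ one has $\tau^2(b)\leq\frac{3b^4+10b^2}{720b^2}=\frac{b^2}{240}+\frac1{72}$. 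Summing over levels and adding $\frac1{12}\leq\frac{b_1^2}{48}$ gives
\[
V_m\leq S_m\Bigl(\frac1{240}+\frac1{288}+\frac1{48}\Bigr)\leq\frac{S_m}{36},
\]
using $1\leq b_j^2/4$. I expect no real obstacle here, only checking that the constants fit.
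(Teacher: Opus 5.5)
Your covariance and variance computations follow the paper's proof. You use iterated conditioning along the reverse filtration, then the martingale--coboundary decomposition with $\E[d_i\psi_m]=0$, $\E[d_i\psi_0]=B_i^{-1}\E[d_i\psi_i]=0$ and boundary variance $(1-B_m^{-1})/12$. Your aside that one cannot iterate \eqref{eq:conditional-psi} from level $0$ is confused: $\E[\psi_0\mid\cF_i]=B_i^{-1}\psi_i$ is exactly that iteration. The identity you then use is nevertheless correct. The lower bound in \eqref{eq:variance-comparison} is also fine.

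The gap is in the last line of the upper bound: the numerical inequality you claim is false, since
\[
\frac1{240}+\frac1{288}+\frac1{48}=\frac{6+5+30}{1440}=\frac{41}{1440}>\frac{40}{1440}=\frac1{36}.
\]
The loss comes from your per-level estimate $\tau^2(b)\le\frac{b^2}{240}+\frac1{72}\le\frac{11}{1440}\,b^2$. Dropping the $-13$ and using $1\le b^2/4$ makes it too weak once the boundary term is charged at the rate $\frac1{12}\le\frac{S_m}{48}$. For instance, for $m=1$, $b_1=2$ your chain of bounds gives $\frac{41}{360}$ against the target $\frac{4}{36}=\frac{40}{360}$.

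The paper's fix is the sharper per-level bound $\tau^2(b)\le\frac{b^2}{144}$. This is equivalent to $2b^4-10b^2+13\ge0$, which holds for every $b$ because the discriminant in $b^2$ is $100-104<0$. Then $V_m\le\frac{S_m}{144}+\frac{S_m}{48}=\frac{S_m}{36}$.

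Alternatively, you can keep your estimate. The bound $V_m\le\frac{11}{1440}S_m+\frac{1-B_m^{-1}}{12}$ already gives $V_m\le S_m/36$ whenever $S_m\ge5$. The only remaining case, $S_m=4$ (i.e.\ $m=1$, $b_1=2$), can be checked by hand: $V_1=\frac{5}{192}+\frac1{24}=\frac{13}{192}<\frac19$.
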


\begin{proof}
Recalling \eqref{eq:X-psi-filtration}, we write
$\Cov(\Xi_i,\Xi_j)=\E[X_iX_j]$.  For $0\le i<j$, the variable $X_j$ is
$\cF_{i+1}$-measurable.  Hence \eqref{eq:conditional-X} gives
\begin{align}
 \E[X_iX_j]
 &=\frac{b_{i+1}-1}{2\,b_{i+1}}\E[\psi_{i+1}X_j].
 \label{eq:covariance-first-conditioning}
\end{align}
Iterating \eqref{eq:conditional-psi} from level $i+1$ through level
$j-1$ gives
\begin{equation}\label{eq:v71-auto-0027}
 \E[\psi_{i+1}\mid\cF_j]
 =\frac{\psi_j}{b_{i+2}\cdots b_j}.
\end{equation}
Substituting \eqref{eq:v71-auto-0027} into
\eqref{eq:covariance-first-conditioning}, and then using
\eqref{eq:psi-X} at level $j$, yields
\begin{align}
 \E[X_iX_j]
 &=\frac{b_{i+1}-1}
 {2\,b_{i+1}b_{i+2}\cdots b_j}\E[\psi_jX_j]=-\frac{(b_{i+1}-1)(b_{j+1}-1)}
 {24\,b_{i+1}\cdots b_{j+1}
}
,\label{eq:v71-auto-0207}
\end{align}
which proves \eqref{eq:Xi-covariance}.

We now compute the variance of the sum of the $\Xi_i$'s by using \eqref{eq:martingale-coboundary}.  First, distinct
martingale differences are orthogonal: if $i<j$, then $d_j$ is
$\cF_{i+1}$-measurable and
\begin{equation}\label{eq:v71-auto-0029}
 \E[d_id_j]=\E\left[d_j\E[d_i\mid\cF_{i+1}]\right]=0.
\end{equation}
Next, \eqref{eq:conditional-psi} and \eqref{eq:psi-second-moment}  imply
\begin{equation}\label{eq:v71-auto-0030}
 \E[\psi_i\psi_{i+1}]
 =\E\left[\psi_{i+1}\E[\psi_i\mid\cF_{i+1}]\right]
 =\frac1{6\,b_{i+1}}.
\end{equation}
Consequently, by \eqref{eq:d-def}, \eqref{eq:psi-X}, and \eqref{eq:v71-auto-0030}, we see that $d_i$ and $\psi_i$ are also orthogonal:
\begin{align}
 \E[d_i\psi_i]
 &=\E[X_i\psi_i]
   +\frac12\left(\E[\psi_i^2]-\E[\psi_i\psi_{i+1}]\right)=-\frac{b_{i+1}-1}{12\,b_{i+1}}
   +\frac{1}{2}\left(\frac{1}{6}-\frac{1}{6\,b_{i+1}}\right)=0.\label{eq:v71-auto-0209}
\end{align}
For $i<m$, the variable $\psi_m$ is $\cF_{i+1}$-measurable, and hence by \eqref{eq:reverse-MD} we have
\begin{align}
\E[d_i\psi_m]=\E[\E[d_i\psi_m|\cF_{i+1}]]=\E[\psi_m\E[d_i|\cF_{i+1}]]=0.\label{E[d_i_psi_m]=0}
\end{align}
Iterating \eqref{eq:conditional-psi} in the other
direction gives
\begin{equation}\label{eq:psi-zero-conditioning}
 \E[\psi_0\mid\cF_i]=B_i^{-1}\psi_i.
\end{equation}
Since $d_i$ is $\cF_i$-measurable, \eqref{eq:psi-zero-conditioning} and \eqref{eq:v71-auto-0209} yield
\begin{equation}\label{eq:v71-auto-0031}
 \E[d_i\psi_0]=\E[d_i\E[\psi_0|\cF_{i}]]
 =B_i^{-1}\E[d_i\psi_i]=0.
\end{equation}
Thus, by \eqref{E[d_i_psi_m]=0} and \eqref{eq:v71-auto-0031}, the reverse-martingale-difference sum $\sum_{i=0}^{m-1} d_i$ in \eqref{eq:martingale-coboundary} is orthogonal
to both boundary variables $\psi_m$ and $\psi_0$.

It remains to calculate the boundary variance.  Taking $i=m$ in
\eqref{eq:psi-zero-conditioning} gives
\begin{equation}\label{eq:v71-auto-0032}
 \E[\psi_0\psi_m]=\E[\psi_m\E[\psi_0|\cF_{m}]]=\frac1{B_m}\E[\psi_m^2]=\frac1{6\,B_m},
\end{equation}
while $\E[\psi_0]=\E[\psi_m]=0$ and
$\E[\psi_0^2]=\E[\psi_m^2]=1/6$.  Therefore
\begin{equation}\label{eq:v71-auto-0033}
 \Var\left(\frac{\psi_m-\psi_0}{2}\right)
 =\frac{1-B_m^{-1}}{12}.
\end{equation}
Combining \eqref{eq:v71-auto-0033} with \eqref{eq:v71-auto-0029} and \eqref{eq:d-variance}, we obtain
\begin{align}
 \Var\left(\mathcal P_m\right)
 &=\Var\left(\sum_{i=0}^{m-1}X_i\right)=\sum_{i=0}^{m-1}\E[d_i^2]+\frac{1-B_m^{-1}}{12}=T_m+\frac{1-B_m^{-1}}{12}=V_m\label{eq:v71-auto-0212}
\end{align}
and  \eqref{eq:product-exact-var} is proven.

Finally, using the definition \eqref{eq:mu_and_tau} and the fact that  $b_{i+1}\geq2$ for every $i\geq0$,  we have the elementary bounds
\begin{equation}\label{eq:v71-auto-0034}
  \frac{b_{i+1}^2}{240}\leq\tau^2(b_{i+1})\leq\frac{b_{i+1}^2}{144}.
\end{equation}
Summing over $0\le i\le m-1$ gives
\begin{equation}\label{eq:v71-auto-0035}
 \frac{S_m}{240}\leq T_m\leq\frac{S_m}{144}.
\end{equation}
Since $S_m\geq4$ for every $m\geq1$ and
$\frac{(1-B_m^{-1})}{12}\leq\frac{1}{12}\leq \frac{S_m}{48}$, we conclude that $\frac{S_m}{240}\leq V_m
 \leq\frac{S_m}{144}+\frac{S_m}{48}
 =\frac{S_m}{36}$, proving \eqref{eq:variance-comparison}.
\end{proof}

\Cref{prop:exact-covariance} identifies the exact variance of the Haar-model
sum. For the martingale CLT, we must also control the \emph{predictable
quadratic variation} of the sequence $(d_i)_{i\geq0}$, namely\footnote{The
index $i+1$ on the $\sigma$-algebra is due to the reverse filtration.}
\begin{align}\label{predictable-quadratic-variation-def}
    \sum_{i=0}^{m-1} \E[d_i^2\mid\cF_{i+1}].
\end{align}
We compare \eqref{predictable-quadratic-variation-def} with the deterministic
sum
\begin{align}
    \sum_{i=0}^{m-1} \E[d_i^2].\label{var-martingale-sum}
\end{align}
By \eqref{eq:d-variance} and \eqref{eq:M_and_T_and_V}, this sum is the expectation of \eqref{predictable-quadratic-variation-def} and  equals
$T_m$. By the orthogonality relation \eqref{eq:v71-auto-0029}, the sum  also equals
$\Var\!\left(\sum_{i=0}^{m-1}d_i\right)$. By
\eqref{eq:conditional-d2}, the only randomness in
$\E[d_i^2\mid\cF_{i+1}]$ is contained in $G_{i+1}$. The next lemma controls how the random variable \eqref{predictable-quadratic-variation-def} deviates from its mean \eqref{var-martingale-sum} by quantifying its variance. To this end,  it suffices to quantify the covariance of $G_i$
and $G_j$ for $0\leq i<j$.

\begin{lemma}\label{lem:G-decorrelation}
We have
\begin{equation}\label{eq:G-cov-bound}
 |\Cov(G_i,G_j)|=O(2^{-(j-i)/2})
\end{equation}
uniformly for all $0\leq i<j$.
Furthermore, one has
\begin{equation}\label{eq:conditional-variance-fluctuation}
 \Var\left(\sum_{i=0}^{m-1}
 \E[d_i^2\mid\cF_{i+1}]\right)=
 O(m)
\end{equation}
uniformly in $m\geq1$.
\end{lemma}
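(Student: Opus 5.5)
The idea is to show that $G_j$ depends on the pair $(U_j(a),U_j(e))$. Conditioning this pair on $\cF_{i+1}$ for $i<j$ (or rather, going the other way) loses memory geometrically. Write $G_i=g(U_i(a),U_i(e))$, where $g(u,v)=u^2+\1_{\{v<u\}}(1-2u)$ is bounded ($0\le g\le 1$). Under the Haar model, the pair $(U_i(a),U_i(e))$ is uniform on $[0,1)^2$. The recursion \eqref{Ui_and_Ui+1} gives
\begin{equation*}
U_i(a)=\frac{a_i+\cdots}{b_{i+1}}
=\sum_{k=0}^{j-i-1}\frac{a_{i+k}}{b_{i+1}\cdots b_{i+k+1}}+\frac{U_j(a)}{b_{i+1}\cdots b_j},
\end{equation*}
and similarly for $e$. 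Since $G_j$ is $\cF_j$-measurable, we have $\Cov(G_i,G_j)=\E\bigl[(\E[G_i\mid\cF_j]-\E G_i)\,G_j\bigr]$, so it suffices to bound $\bigl|\E[G_i\mid\cF_j]-\tfrac16\bigr|$. Conditionally on $\cF_j$, the pair $(U_i(a),U_i(e))$ has the form $(P+u/L,\;P'+v/L)$ with $L:=b_{i+1}\cdots b_j\ge 2^{j-i}$. Here $(P,P')$ is uniform on the product grid $\{0,1/L,\ldots,(L-1)/L\}^2$ (the mixed-radix prefixes are uniform on $\{0,\ldots,L-1\}/L$), and $(u,v)=(U_j(a),U_j(e))$ is fixed.

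The key step is then a Riemann-sum comparison: $\E[G_i\mid\cF_j]$ is the average of $g$ over the shifted grid $\{(k+u)/L,(k'+v)/L\}$, while $\tfrac16=\int_{[0,1)^2}g$. On each grid cell of side $1/L$ not meeting the diagonal $\{v=u\}$, $g$ is Lipschitz (constant $\le 4$), contributing error $O(1/L)$ per unit mass. The cells meeting the diagonal number $O(L)$ out of $L^2$, so they have total mass $O(1/L)$, and $g$ is bounded there. This gives $|\E[G_i\mid\cF_j]-\tfrac16|=O(1/L)=O(2^{-(j-i)})$, which is even stronger than \eqref{eq:G-cov-bound}. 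Since the exponent $1/2$ is all that is claimed, I would state the bound with some slack. As a fallback, one could apply Cauchy–Schwarz to an $L^2$ bound on $\E[G_i\mid\cF_j]-\tfrac16$, which naturally produces a square root, explaining the exponent $2^{-(j-i)/2}$. The main obstacle is handling the discontinuity of $g$ on the diagonal carefully. The cell count above does this, and it is exactly where the $L^2$ fallback would also suffice.

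For \eqref{eq:conditional-variance-fluctuation}, \eqref{eq:conditional-d2} gives
\begin{equation*}
\sum_{i=0}^{m-1}\E[d_i^2\mid\cF_{i+1}]=\text{const}+\sum_{i=0}^{m-1}c_iG_{i+1},
\qquad c_i=\frac{b_{i+1}^2-1}{12b_{i+1}^2}\in[0,\tfrac1{12}].
\end{equation*}
The variance is therefore
\begin{equation*}
\sum_{i,j}c_ic_j\Cov(G_{i+1},G_{j+1})\le \frac1{144}\sum_{i,j<m}C\,2^{-|i-j|/2}=O(m),
\end{equation*}
because $\Var(G_i)\le1$ and the geometric series $\sum_k 2^{-k/2}$ converges, uniformly in $m$ and in the radix sequence.
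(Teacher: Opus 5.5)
Your proof is correct. The second half, where you write the predictable quadratic variation as a constant plus $\sum_i c_iG_{i+1}$ with $0\le c_i\le\frac1{12}$ and sum the geometric covariance bound, is essentially what the paper does.

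For the covariance bound \eqref{eq:G-cov-bound} you take a different route.

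\emph{The paper's route.} It builds a truncation $G_i^{(j)}$ from the prefix integers $P_a,P_e$ alone, so $G_i^{(j)}$ is independent of $G_j$. It shows $\E[(G_i-G_i^{(j)})^2]\le 3/(b_{i+1}\cdots b_j)$: the error is Lipschitz off the event $\{P_a=P_e\}$, and that event has probability $1/(b_{i+1}\cdots b_j)$. It then applies Cauchy--Schwarz to $\Cov(G_i-G_i^{(j)},G_j)$. The square root lost at this step is the source of the exponent $1/2$.

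\emph{Your route.} You compute $\E[G_i\mid\cF_j]$ exactly, as the average of $g$ over a shifted $L\times L$ grid. You then compare it cell by cell with $\int_{[0,1)^2}g=\frac16$. On off-diagonal cells the indicator is constant, so $g$ equals $x^2$ or $(1-x)^2$ and is $2$-Lipschitz. The $L$ diagonal cells carry total mass $1/L$, and $0\le g\le 1$ there. This gives the uniform bound $|\E[G_i\mid\cF_j]-\frac16|\le 3/L$. Hence $|\Cov(G_i,G_j)|\le 3/(b_{i+1}\cdots b_j)\le 3\cdot 2^{-(j-i)}$, which is stronger than what is stated.

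\emph{Comparison.} The paper's decoupling argument needs only the independence of the truncation, not the explicit conditional law or the value of $\int g$. Yours is an $L^\infty$ mixing estimate and gives the sharper rate. Any summable decay suffices for \eqref{eq:conditional-variance-fluctuation}, so the gain is not needed downstream. Also, the difference in rates comes from the final inequality, not the decomposition. Bounding $|\Cov(G_i-G_i^{(j)},G_j)|$ by $\E|G_i-G_i^{(j)}|\cdot\sup|G_j-\E G_j|\le (3/R)\cdot\frac56$, instead of using Cauchy--Schwarz, would give the paper's argument the same $1/R$ rate, where $R=b_{i+1}\cdots b_j$.
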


\begin{proof}
Fix $0\le i<j$ and put $R=b_{i+1}\cdots b_j$. In particular, $R\geq 2^{j-i}$. Iterating
\eqref{Ui_and_Ui+1}, we may write
\begin{equation}\label{eq:tail-prefix-split}
 U_i(a)=\frac{P_a+U_j(a)}{R},
 \qquad
 U_i(e)=\frac{P_e+U_j(e)}{R},
\end{equation}
where $P_a$ (respectively $P_e$) belongs to $\{0,\ldots,R-1\}$ and depends only on the digits of $a$ (respectively of $e$) with
indices $i,\ldots,j-1$.  These two prefix integers are independent and
uniformly distributed on $\{0,\ldots,R-1\}$.  
We consider the rational truncations to the tails \eqref{eq:tail-prefix-split}: $\frac{P_a}{R}$ and $\frac{P_e}{R}$ and use them to define a truncated version of \eqref{eq:G-def}. That is, we set
\begin{align}
    G_i^{(j)}:=\left(\frac{P_a}{R}\right)^2+\1_{\{P_e<P_a\}}\left(1-2\frac{P_a}{R}\right).\label{def:G_i^j}
\end{align}
The random quantity $G_i^{(j)}$ is determined by the digits of $a$ and $e$ with
indices $i,\ldots,j-1$, so it is independent of $G_j$. We wish to compare $G_i$ and $G_i^{(j)}$. To this end, first compare $\1_{\{U_i(e)<U_i(a)\}}$  and $\1_{\{P_e<P_a\}}$. If $P_a\neq P_e$, then $U_i(e)<U_i(a)$ if and only if $P_e<P_a$. This is because $0\le U_j(a), U_j(e)<1$. Hence, the indicators $\1_{\{U_i(e)<U_i(a)\}}$ can only differ on the event $\{P_a=P_e\}$, whose probability is $\frac{1}{R}$.
Secondly, note that for $\epsilon\in\{0,1\}$, the functions $u\mapsto u^2+\epsilon (1-2u)$ are both $2$-Lipschitz on $[0,1]$ and take value in $[0,1]$. Since $|U_i(a)-\frac{P_a}{R}|< R^{-1}$, it follows $|G_i-G_i^{(j)}|\leq 2R^{-1}$ on $\{P_a\neq P_e\}$, whereas  $|G_i-G_i^{(j)}|\leq 1$ on  $\{P_a=P_e\}$.
Therefore,  
\begin{align}\label{L2-distance-squared-Gi-Gi^j}
    \E\left[\left(G_i-G_i^{(j)}\right)^2\right]
    \le 4 R^{-2}+R^{-1}\le 3R^{-1}\le 3\cdot 2^{-(j-i)}
\end{align}
since $R\ge2$ and $R\ge2^{j-i}$. We use the independence of $G_i^{(j)}$ and $G_j$, Cauchy--Schwarz, and the bound $\Var(G_j)=\E[(G_j-\E[G_j])^2]\le\frac{1}{4}$ to obtain
\begin{align}
 |\Cov(G_i,G_j)|
 =|\Cov(G_i-G_i^{(j)},G_j)|
 \leq \frac{1}{2}\left(\E\left[\left(G_i-G_i^{(j)}\right)^2\right]\right)^{1/2}.\label{eq:v71-auto-0213}
\end{align}
Combining the latter bound and 
 \eqref{L2-distance-squared-Gi-Gi^j} we get
 \eqref{eq:G-cov-bound} with an implied constant $C=\sqrt{3}/2$.
Using \eqref{eq:conditional-d2} and \eqref{eq:d-variance} we write
\begin{equation}\label{eq:v71-auto-0269}
 H_i:=\E[d_i^2\mid\cF_{i+1}]-\E[d_i^2]
 =\frac{b_{i+1}^2-1}{12b_{i+1}^2}
 \left(G_{i+1}-\frac{1}{6}\right).
\end{equation}
Since
\begin{equation}\label{eq:v71-auto-0037}
 0\leq\frac{b_{i+1}^2-1}{12b_{i+1}^2}\leq\frac{1}{12},
\end{equation}
we have
\begin{equation}\label{eq:v71-auto-0038}
 \Var(H_i)\leq\frac{1}{12^2}\frac{1}{4}=\frac{1}{576}.
\end{equation}
Similarly, for $i<j$, \eqref{eq:G-cov-bound} gives
\begin{equation}\label{eq:v71-auto-0039}
 |\Cov(H_i,H_j)|
 \leq\frac{1}{12^2}\frac{\sqrt{3}}{2}2^{-(j-i)/2}.
\end{equation}
We also use the summability estimate
\begin{equation}\label{eq:v71-auto-0040}
 \sum_{0\leq i<j\leq m-1}2^{-(j-i)/2}
 \leq(1+\sqrt{2})m.
\end{equation}
Therefore
\begin{align}
&\Var\left(\sum_{i=0}^{m-1}
 \E[d_i^2\mid\cF_{i+1}]\right)=\Var\left(\sum_{i=0}^{m-1}H_i\right)\\
 &=\sum_{i=0}^{m-1} \Var(H_i)+2\sum_{0\le i<j\le m-1}\Cov(H_i,H_j)\le\left(\frac{1}{576}+\frac{\sqrt{3}}{144}(1+\sqrt{2})\right)m,\label{eq:v71-auto-0214}
\end{align}
which proves \eqref{eq:conditional-variance-fluctuation}. 
\end{proof}

Now recall the definition \eqref{eq:SQDelta} of  $S_m$, $Q_m$, and $\Delta_m$. The next lemma shows that the natural assumption 
\begin{equation}\label{eq:feller}
 \lim_{m\to\infty}\Delta_m=0,
\end{equation}
is equivalent to a standard Feller condition. 

\begin{lemma}\label{lem:feller-equivalence}
For every $m\geq1$, set $M_m^*:=\displaystyle\max_{1\leq j\leq m}b_j^2$.  Then
\begin{equation}\label{eq:feller-sandwich}
 \left(\frac{M_m^*}{S_m}\right)^2
 \leq\Delta_m\leq\frac{M_m^*}{S_m}.
\end{equation}
Hence \eqref{eq:feller} holds if and only if 
$\displaystyle\lim_{m\to\infty}\frac{M_m^*}{S_m}=0$.
\end{lemma}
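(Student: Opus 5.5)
The plan is to prove the two inequalities in \eqref{eq:feller-sandwich} separately. Both follow from elementary comparisons between the sums $Q_m=\sum_{j\le m}b_j^4$ and $S_m=\sum_{j\le m}b_j^2$, using the single quantity $M_m^*=\max_{j\le m}b_j^2$.

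For the upper bound, I bound each term of $Q_m$ by pulling out one factor $b_j^2\le M_m^*$:
\begin{equation*}
 Q_m=\sum_{j=1}^m b_j^2\cdot b_j^2\le M_m^*\sum_{j=1}^m b_j^2=M_m^*S_m.
\end{equation*}
Dividing by $S_m^2$ gives $\Delta_m\le M_m^*/S_m$.

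For the lower bound, I keep only the largest term of $Q_m$. Choosing $j^*\le m$ with $b_{j^*}^2=M_m^*$, all summands are nonnegative, so
\begin{equation*}
 Q_m\ge b_{j^*}^4=(M_m^*)^2.
\end{equation*}
Dividing by $S_m^2$ yields $(M_m^*/S_m)^2\le\Delta_m$.

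The equivalence then follows directly from the sandwich. Set $x_m:=M_m^*/S_m$, and note that $x_m\in(0,1]$. Then $x_m^2\le\Delta_m\le x_m$. If $x_m\to0$, the upper bound forces $\Delta_m\to0$. Conversely, if $\Delta_m\to0$, then $x_m\le\Delta_m^{1/2}\to0$.

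There is no real obstacle here. The only point to check is that $S_m>0$, which holds because $b_j\ge2$, so all the divisions are legitimate.
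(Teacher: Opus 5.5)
Your proposal is correct and matches the paper's proof: you use the same two comparisons, $Q_m\le M_m^*S_m$ and $Q_m\ge (M_m^*)^2$, and divide by $S_m^2$. Your explicit check of the equivalence, $x_m\le\Delta_m^{1/2}$ in one direction and $\Delta_m\le x_m$ in the other, just spells out what the paper calls immediate.
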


\begin{proof}
Choose $j_*$ such that $b_{j_*}^2=M_m^*$.  Since $Q_m$ is a sum of
nonnegative terms, $Q_m\geq b_{j_*}^4=(M_m^*)^2$.
On the other hand, $b_j^4\leq M_m^*b_j^2$ for every $j\leq m$, and hence $Q_m=\sum_{j=1}^m b_j^4
 \leq M_m^*S_m$.
Dividing these inequalities by $S_m^2$ proves
\eqref{eq:feller-sandwich} and the asserted
equivalence is immediate.
\end{proof}

\section{A quantitative CLT in the Haar digit model}
In this section we prove a central limit theorem for the sequence of random variables $(\mathcal P_m)_{m\geq 1}$ defined in \eqref{eq:Pm-def}.
Recall the deterministic sequences $M_m$, $V_m$, $\Delta_m$ from \eqref{eq:M_and_T_and_V} and \eqref{eq:SQDelta}.

\begin{theorem}[CLT in the Haar digit model]\label{thm:product-clt}
We have
\begin{equation}\label{eq:product-rate}
 \dK\!\left(\frac{\mathcal P_m-M_m}{\sqrt{V_m}}\:,\:\Normal(0,1)\right)
 =O(\Delta_m^{1/5})
\end{equation}
uniformly in $m\geq1$. In particular, \eqref{eq:feller} implies the central limit theorem. Assuming 
\eqref{eq:feller}, for every fixed $p>0$, we have
\begin{equation}\label{eq:product-moments}
 \E\left[\left|\frac{\mathcal P_m-M_m}{\sqrt{V_m}}\right|^p\right]
 \longrightarrow \E[|G|^p],
 \qquad G\sim\Normal(0,1),
\end{equation}
as $m\to\infty$.
\end{theorem}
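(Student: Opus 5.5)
We would prove \Cref{thm:product-clt} by starting from the martingale--coboundary decomposition \eqref{eq:martingale-coboundary}. This gives
\[
\mathcal P_m-M_m=\sum_{i=0}^{m-1}d_i+\tfrac12(\psi_m-\psi_0),
\]
with $|\psi_m-\psi_0|\le 2$. Set $\sigma_m^2:=T_m=\sum_{i<m}\E[d_i^2]$. By \eqref{eq:variance-comparison}, $V_m\asymp S_m$ and $|V_m-T_m|\le 1/12$. Both the coboundary term and the change of normalization from $\sqrt{T_m}$ to $\sqrt{V_m}$ therefore cost $O(S_m^{-1/2})$ in Kolmogorov distance. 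This uses the standard perturbation bound $\dK(X+Y,\Normal)\le \dK(X,\Normal)+\Pp(|Y|>\delta)+O(\delta)$ together with the Lipschitz continuity of $\Phi$ under scaling. Since $\Delta_m=Q_m/S_m^2\ge S_m^{-1}\cdot(\min b_j^4/\max b_j^2)$ is not obviously comparable to $S_m^{-1}$, we will instead use $\Delta_m\ge 1/m$ (Cauchy--Schwarz: $S_m^2\le mQ_m$) and $S_m\ge 4m$. These give $S_m^{-1/2}\le \tfrac12 m^{-1/2}\le \tfrac12\Delta_m^{1/2}\le\Delta_m^{1/5}$. The problem thus reduces to the reverse martingale $\sum_{i<m}d_i/\sqrt{T_m}$. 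Reindexing $i\mapsto m-1-i$ turns it into a forward martingale-difference array with respect to the increasing filtration $\cF_{m-1-i}$.

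For the martingale part we would apply Haeusler's martingale Berry--Esseen bound. For $p\ge1$, with $\sigma$-normalized differences $\tilde d_i=d_i/\sqrt{T_m}$, it gives
\[
\dK\le C_p\Bigl(\sum_i\E|\tilde d_i|^{2p}+\E\Bigl|\sum_i\E[\tilde d_i^2\mid\cF_{i+1}]-1\Bigr|^p\Bigr)^{1/(2p+1)}.
\]
We take $p=2$, which produces the exponent $1/5$. For the Lindeberg-type term, \eqref{eq:xi-bound} and the definition \eqref{eq:d-def} give $|d_i|\le b_{i+1}+1\le 2b_{i+1}$, so $\sum_i\E d_i^4\le 16 Q_m$, and after normalization this term is $O(Q_m/T_m^2)=O(\Delta_m)$ by \eqref{eq:v71-auto-0035}. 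For the conditional-variance term we use $\E|X|^2=\Var X$ together with \eqref{eq:conditional-variance-fluctuation}, which gives $O(m/T_m^2)=O(m/S_m^2)$. We then bound this by $\Delta_m$: by Cauchy--Schwarz, $S_m^2\le mQ_m$ does not go the right way, but we only need $m\le Q_m/16$ (as $b_j\ge2$), hence $m/S_m^2\le Q_m/(16S_m^2)=\Delta_m/16$. Both terms are therefore $O(\Delta_m)$, and the Haeusler bound yields $O(\Delta_m^{1/5})$. Combined with the first step this gives \eqref{eq:product-rate}.

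Moment convergence \eqref{eq:product-moments} follows from convergence in law plus uniform integrability. We plan to show that $\sup_m\E|(\mathcal P_m-M_m)/\sqrt{V_m}|^{q}<\infty$ for every $q$, via Burkholder's inequality for the reverse martingale, which bounds the $q$-th moment by $\E(\sum_i d_i^2)^{q/2}$. Rosenthal/Burkholder in the form
\[
\E\Bigl|\sum d_i\Bigr|^q\le C_q\Bigl(\E\Bigl(\sum\E[d_i^2\mid\cF_{i+1}]\Bigr)^{q/2}+\sum\E|d_i|^q\Bigr)
\]
is convenient here. The conditional variance sum is bounded deterministically by $C\sum b_{i+1}^2=CS_m$ from \eqref{eq:conditional-d2}, since $0\le G\le 1$. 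We also have $\sum\E|d_i|^q\le 2^q\sum b_{i+1}^q\le 2^q (M_m^*)^{(q-2)/2}S_m$, which is $o(S_m^{q/2})$ under \eqref{eq:feller} by \Cref{lem:feller-equivalence}. The coboundary term is bounded, so the moments are bounded uniformly in $m$. The main obstacle is the precise form of Haeusler's theorem: we must check its normalization conventions and the reversal of the filtration, and confirm that no extra logarithmic factor appears for $p=2$.
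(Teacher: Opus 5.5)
Your proposal is correct and is essentially the paper's proof. Both arguments reverse the martingale--coboundary decomposition into a forward array and apply Haeusler's bound with $p=2$. Both control the conditional-variance term by \eqref{eq:conditional-variance-fluctuation} together with $Q_m\ge 16m$, and the fourth-moment term by a pointwise bound $|d_i|=O(b_{i+1})$. Both absorb the coboundary and the $T_m$-to-$V_m$ renormalization at cost $O(\Delta_m^{1/5})$, and obtain uniform integrability from Burkholder.

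The differences are cosmetic. The paper compares $S_m^{-1}$ with $\Delta_m$ via $Q_m\ge 4S_m$ rather than via $\Delta_m\ge 1/m$. It also closes the moment bound with the plain square-function inequality and the pointwise estimate $\sum_i d_i^2\le 9S_m$, so your Rosenthal-type variant is not needed. One small slip: \eqref{eq:xi-bound} alone gives $|d_i|\le b_{i+1}+\mu(b_{i+1})\le 2b_{i+1}$ rather than $b_{i+1}+1$, but this is harmless since $2b_{i+1}$ is all you use.
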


\begin{proof}
For a fixed $m$, reverse the order of the differences by setting
\begin{equation}\label{eq:v71-auto-0043}
 d_{m,k}=d_{m-k},
 \qquad
 \cG_{m,k}=\cF_{m-k},
 \qquad 1\leq k\leq m,
\end{equation}
and set $\cG_{m,0}=\cF_m$.  The filtration $(\cG_{m,k})_{0\leq k\leq m}$
is increasing, and \eqref{eq:reverse-MD} gives
\begin{equation}\label{eq:v71-auto-0044}
 \E[d_{m,k}\mid\cG_{m,k-1}]
 =\E[d_{m-k}\mid\cF_{m-k+1}]=0.
\end{equation}
Thus $(d_{m,k},\cG_{m,k})_{1\le k\le m}$ is a (forward) martingale-difference array.
Moreover,
\begin{equation}\label{eq:v71-auto-0045}
 s_m^2:=\sum_{k=1}^m\E[d_{m,k}^2]
 =\sum_{i=0}^{m-1}\E[d_i^2]=T_m
\end{equation}
by \eqref{eq:d-variance}.  We may therefore apply the $p=2$ case of the
martingale Berry--Esseen estimate of Haeusler~\cite{Haeusler}.  See also
Mourrat~\cite{Mourrat} for related quantitative formulations.  Using the original index $i$ rather than $k$, it reads
\begin{equation}\label{eq:Haeusler}
 \dK\!\left(\frac{1}{s_m}\displaystyle\sum_{ i=0}^{m-1}d_i\:,\:\Normal(0,1)\right)
 \leq C\left(
 \left\|\frac1{s_m^2}\sum_{i<m}\E[d_i^2\mid\cF_{i+1}]-1\right\|_2^2
 +\frac1{s_m^4}\sum_{i<m}\E[d_i^4]
 \right)^{\!\!1/5}
\end{equation}
for some absolute constant $C>0$. 
We estimate the two terms on the right-hand side of \eqref{eq:Haeusler} separately.  Since $\E[\sum_{i=0}^{m-1}\E[d_i^2|\cF_{i+1}]]=\sum_{i=0}^{m-1}\E[d_i^2]=T_m$ by \eqref{eq:d-variance} and \eqref{eq:M_and_T_and_V}, we can write
\begin{align}
 \left\|\frac1{T_m}\sum_{i<m}
  \E[d_i^2\mid\cF_{i+1}]-1\right\|_2^2&=\frac{1}{T_m^2}\E\left[\left(\sum_{i=0}^{m-1}\E[d_i^2|\cF_{i+1}]-T_m\right)^{\!\!2\:}\right]\label{eq:v71-auto-0214_2}\\
 &=\frac1{T_m^2}\Var\!\left(\sum_{i=0}^{m-1}
   \E[d_i^2\mid\cF_{i+1}]\right).\label{eq:v71-auto-0215}
\end{align}
Now the statement \eqref{eq:conditional-variance-fluctuation} of
\Cref{lem:G-decorrelation}, the lower bound $T_m\geq S_m/240$ from
\eqref{eq:v71-auto-0035}, and the trivial lower bound $Q_m\ge 16m$ imply
\begin{align}
 \frac1{T_m^2}\Var\!\left(\sum_{i=0}^{m-1}
   \E[d_i^2\mid\cF_{i+1}]\right)
 &=O\!\left(\frac{m}{S_m^2}\right)
 =O\!\left(\frac{Q_m}{S_m^2}\right)=O(\Delta_m)\label{eq:v71-auto-0216}
\end{align}
uniformly for all $m\geq1$.

We now estimate the fourth-moment term in \eqref{eq:Haeusler}. Note that
$|\Xi_i|\leq b_{i+1}$ by \eqref{eq:xi-bound}, and
$|X_i|\leq2b_{i+1}$ by \eqref{eq:one-mean}.  Also, $|\psi_i|\leq1$.  Hence, by
\eqref{eq:d-def} and $b_{i+1}\geq2$, we have
\begin{equation}\label{eq:d-pointwise-bound}
 |d_i|\leq2b_{i+1}+1\leq3b_{i+1}.
\end{equation}
This fact, along with \eqref{eq:v71-auto-0035}, implies 
\begin{equation}\label{eq:v71-auto-0046}
 \frac1{T_m^2}\sum_{i=0}^{m-1}\E[d_i^4]
 =O\!\left(\frac{\sum_{j=1}^m b_j^4}{S_m^2}\right)
 =O(\Delta_m)
\end{equation}
uniformly for all $m\geq1$.
Substitution into \eqref{eq:Haeusler} proves
\begin{equation}\label{eq:martingale-rate-intermediate}
 \dK\left(\frac{\sum_{i<m}d_i}{\sqrt{T_m}}\:,\:\Normal(0,1)\right)
 =O(\Delta_m^{1/5}).
\end{equation}

We now restore the coboundary and the variance normalization.
Using \eqref{eq:martingale-coboundary}, we rewrite
\begin{equation}\label{eq:product-normalized-transfer}
 \frac{\mathcal P_m-M_m}{\sqrt{V_m}}
 =\lambda_m\frac{\sum_{i=0}^{m-1}d_i}{\sqrt{T_m}}+\frac{R_m}{\sqrt{V_m}},
\end{equation}
where
\begin{equation}\label{eq:v71-auto-0047}
 R_m=\frac12(\psi_m-\psi_0),
 \qquad
 \lambda_m=\sqrt{\frac{T_m}{V_m}}.
\end{equation}
Note that $|R_m|\leq1$. Moreover,
%
since $0\leq V_m-T_m\leq1/12$ and $T_m\ge S_m/240$, we have
\begin{equation}\label{eq:v71-auto-0048}
 |\lambda_m-1|\leq\frac{10}{S_m},
 \qquad
 \frac{|R_m|}{\sqrt{V_m}}\leq\sqrt{\frac{240}{S_m}}
\end{equation}
for all $m\geq1$.
Now we control the distribution function of $\frac{\mathcal P_m-M_m}{\sqrt{V_m}}$ above and below by that of $\lambda_m\frac{\sum_{i=0}^{m-1}d_i}{\sqrt{T_m}}$. Specifically, we use the bounds
\begin{equation}\label{eq:v71-auto-0049}
 \Pp(W\leq t-\eta)\leq\Pp(W+R\leq t)
 \leq\Pp(W\leq t+\eta)
\end{equation}
with random variables $W=\lambda_m\sum_{i=0}^{m-1}d_i/\sqrt{T_m}$,
$R=R_m/\sqrt{V_m}$, and
$|R|\le \eta:=\sqrt{240/S_m}$.
Note that 
\begin{align}
\left|\Pp(W\le  \tau)-\Phi(\tau)\right|\le \left|\Pp\!\left(\frac{\sum_{i=0}^{m-1}d_i}{\sqrt{T_m}}\le  \frac{\tau}{\lambda_m}\right)-\Phi\!\left(\frac{\tau}{\lambda_m}\right)\right|+\left|\Phi\!\left(\frac{\tau}{\lambda_m}\right)-\Phi(\tau)\right|.\label{compare-with-Phi-add-and-subtract}
\end{align}
Taking the supremum over $\tau\in\R$ in \eqref{compare-with-Phi-add-and-subtract}, using \eqref{eq:martingale-rate-intermediate}, and noting that $\lambda_m$ is bounded away from zero, we obtain $\delta_m:=\dK(W,\Normal(0,1))=O (\Delta_m^{1/5}+|\lambda_m-1|)$ uniformly for all $m\ge 1$.
Hence the first inequality of \eqref{eq:v71-auto-0048} and $S_m^{-1}\le \Delta_m^{1/5}$ yield $\delta_m=O(\Delta_m^{1/5})$. Moreover, $Q_m\geq4S_m$, so $\Delta_m\geq4/S_m$; since $S_m\geq4$, this gives $\eta=O(\Delta_m^{1/5})$.
The rightmost quantity in \eqref{eq:v71-auto-0049} satisfies the bound
\begin{align}
    \Pp(W\leq t+\eta)\le \Phi(t+\eta)+\delta_m\le \Phi(t)+\delta_m+\tfrac{1}{\sqrt{2\pi}}\eta=\Phi(t)+O(\Delta_m^{1/5})\label{pdf-upper-bound}
\end{align}
after using the second inequality of \eqref{eq:v71-auto-0048}.
Similarly, the  leftmost quantity in \eqref{eq:v71-auto-0049} satisfies
\begin{align}
    \Pp(W\leq t-\eta)\ge \Phi(t-\eta)-\delta_m\ge \Phi(t)-O(\Delta_m^{1/5}). \label{pdf-lower-bound}
\end{align}
Combining \eqref{eq:v71-auto-0049}, \eqref{pdf-upper-bound}, and \eqref{pdf-lower-bound}, and taking the supremum over $t\in\mathbb{R}$ proves the first part of the theorem, \eqref{eq:product-rate}. In particular, if $\Delta_m\to0$ as $m\to\infty$, we get the central limit theorem $\frac{\mathcal P_m-M_m}{\sqrt{V_m}}\Rightarrow\Normal(0,1)$.

It remains to prove convergence of moments. This will follow from uniform
integrability, along with the already established convergence in
distribution. We fix $m\ge 1$ and apply Burkholder's square-function
inequality~\cite[Theorem~2.10]{HallHeyde} to the (forward)
martingale-difference array $(d_{m,k})_{1\leq k\leq m}$: for every $q\geq2$
there is a constant $C_q>0$, depending only on $q$, for which
\begin{equation}\label{eq:uniform-moments}
 \E\!\left[\left|\sum_{k=1}^{m}d_{m,k}\right|^{q\:}\right]\leq C_q\, \E\!\left[\left(\sum_{k=1}^m d_{m,k}^2\right)^{q/2}\right].
\end{equation}
The pointwise
bound \eqref{eq:d-pointwise-bound} gives
$\sum_{i=0}^{m-1}d_i^2\leq9S_m$ and  implies 
\begin{equation}\label{eq:uniform-moments-2}
 \E\!\left[\left|\sum_{k=1}^{m}d_{m,k}\right|^{q\:}\right]
 \leq C_q(9S_m)^{q/2}.
\end{equation}  Using again $|R_m|\leq1$ and the lower bound 
$V_m\geq S_m/240$ from \eqref{eq:variance-comparison}, we see that  \eqref{eq:v71-auto-0043}, \eqref{eq:martingale-coboundary}, and
\eqref{eq:uniform-moments-2} imply
\begin{equation}\label{eq:v71-auto-0051}
 \sup_{m\ge 1}\E\left[\left|\frac{\mathcal P_m-M_m}{\sqrt{V_m}}\right|^q\right]<\infty
\end{equation}
for all $q\ge2$.
Now, given a fixed $p>0$, choose $q>\max\{p,2\}$.  The  $L^q$ bound \eqref{eq:v71-auto-0051} makes the $p$th powers uniformly integrable.  Combining this with the CLT
proves \eqref{eq:product-moments}.
\end{proof}

\section{Clock coupling for uniform random times}

The transfer from the Haar digit model to the original statistic $\cD_H$
is implemented by a maximal coupling of the clock digits associated with $N_H$. With $m=m(H)$
and $1\leq r\leq m$, \Cref{lem:time-digits-Haar} couples
$N_H\bmod B_r$ to a uniform residue, and hence the first $r$ digits of
$c(N_H)$ to those of a Haar clock string, with failure probability
$\eps_{H,r}$ in the sense of \Cref{rem:maximal-coupling}. Using the same independent Haar endpoint in both models,
\Cref{lem:clock-coupling} below controls $\cD_H-\mathcal P_m$ in $L^p$ at the
scale $\sqrt{S_m}\,\rho_{H,r,m}^{(p)}$. For $p=2$ this transfers the exact
Haar-model mean \eqref{eq:product-exact-mean} and variance
\eqref{eq:product-exact-var} to estimates for the mean and standard deviation
of $\cD_H$. 
In \Cref{prop:general-time}, the \(L^2\) comparison furnished by
\Cref{lem:clock-coupling} is used to show that the normalized random-time
statistic and the normalized Haar-model statistic have close distribution
functions.  Combining this comparison with the quantitative central limit
theorem for the Haar model in \Cref{thm:product-clt} eventually proves
\Cref{thm:main}.  The analogous \(L^q\) comparisons provide the uniform
integrability needed for the moment convergence in \Cref{thm:moments}.

\begin{lemma}[Coupling finite-support and Haar clocks]\label{lem:clock-coupling}
Let $p\geq1$, $H\geq2$, $m=m(H)$, and $1\leq r\leq m$.  On a common
probability space one can construct $\cD_H$ and a copy of $\mathcal P_m$
such that
\begin{equation}\label{eq:coupling-Lp}
 \|\cD_H-\mathcal P_m\|_p
 \leq C_p\!\left(
 1+\sum_{j=r}^m b_j+
 \left(\sum_{j=1}^m b_j\right)\eps_{H,r}^{1/p}
 \right)
\end{equation}
uniformly in $H$ and $r$, where $C_p$ depends only on $p$.
\end{lemma}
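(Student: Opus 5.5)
Take a single uniform endpoint $Y$ with Haar digit string $a=a(Y)$, independent of the clock. Let $R=N_H\bmod B_r$ and let $R^*$ be maximally coupled to $R$ as in \Cref{rem:maximal-coupling}, so that $\Pp(R\neq R^*)=\eps_{H,r}$. The Haar clock $e^*$ gets its first $r$ digits from $R^*$ through the bijection \eqref{eq:v71-auto-0014}. Its digits $e^*_i$ with $i\ge r$ are taken from an independent Haar string. With $e=c(N_H)$, \Cref{lem:digital-decomposition,lem:local-normal-form} give $\cD_H=\sum_{i=0}^{m-1}\xi_i(e,Y)$, because $e_i=0$ for $i\ge m$ by \eqref{H-sandwiched}. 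We set $\mathcal P_m:=\sum_{i=0}^{m-1}\Xi_i(a,e^*)$. Since $(a,e^*)$ is Haar, this is a genuine copy of the Haar-model sum. Then
\[
\cD_H-\mathcal P_m=\sum_{i=0}^{m-1}\bigl(\xi_i(e,Y)-\Xi_i(a,e^*)\bigr),
\]
and $\xi_i$ and $\Xi_i$ are given by the same formula \eqref{eq:local-normal-form}.

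We split the levels into $i\ge r-1$ and $i\le r-2$. For $r-1\le i\le m-1$ we use only the pointwise bound \eqref{eq:xi-bound}, which gives $|\xi_i|+|\Xi_i|\le 2b_{i+1}$. Summing over these levels, the contribution is deterministically at most $2\sum_{j=r}^{m}b_j$.

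For $i\le r-2$ we work on the good event $\{R=R^*\}$, on which $e_i=e^*_i$ for all $i<r$. Formula \eqref{eq:local-normal-form} shows that $\xi_i$ and $\Xi_i$ can then differ only through the comparison indicator $\1_{\{a_i<e_i\}}\bigl(\1_{\{U_{i+1}(e)<U_{i+1}(a)\}}-\1_{\{U_{i+1}(e^*)<U_{i+1}(a)\}}\bigr)$. The tails $U_{i+1}(e)$ and $U_{i+1}(e^*)$ share the digits with indices $i+1,\dots,r-1$ and differ only beyond level $r$. Arguing as in the proof of \Cref{lem:G-decorrelation}, write $U_{i+1}(a)=(P_a+U_r(a))/R'$ with $R'=b_{i+2}\cdots b_r$, and similarly for the clocks with a common prefix $P_e$. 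The two indicators can disagree only if $P_a=P_e$. This means the prefixes of $a$ agree with those of the common clock, and it happens with conditional probability $1/R'\le 2^{-(r-1-i)}$, using only the Haar law of $a$, which is independent of everything else.

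Hence on the good event the total low-level discrepancy is at most $\sum_{i\le r-2}\1_{E_i}$, where $E_i=\{a_j=e_j \text{ for } i<j<r\}$. The events $E_i$ are nested, so this sum is at most $1+L$, where $L$ is the length of the run of agreement of $a$ with $e$ just below level $r$. Because $a$ is Haar and independent of $e$, the variable $L$ is stochastically dominated by a geometric variable with parameter $1/2$, so $\|L\|_p\le C_p$. This is the source of the $1$ in \eqref{eq:coupling-Lp}.

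On the bad event $\{R\ne R^*\}$ we again use \eqref{eq:xi-bound}. The whole difference is then at most $2\sum_{j=1}^m b_j$, and its $L^p$ norm is at most $2(\sum_j b_j)\eps_{H,r}^{1/p}$. Minkowski's inequality combines the three pieces and gives \eqref{eq:coupling-Lp}.

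The main obstacle is the low-level indicator count. We must ensure that the disagreement events are controlled uniformly, with a geometric tail in $L^p$, rather than being summed naively to $O(r)$. The nesting of the events $E_i$, together with the fact that the endpoint digits are exactly Haar and independent of the clock (even after coupling), is what should make this work. We also need to check that the coupled tail of $e$ is allowed to be arbitrary, since only $a$'s law is used.
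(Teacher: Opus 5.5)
Your proof is correct. Its skeleton matches the paper's: the same maximal coupling of $N_H\bmod B_r$ with a uniform residue, independent Haar digits appended above level $r$, and a common endpoint string $a$. You also use the pointwise bound \eqref{eq:xi-bound} on levels $r-1\le i\le m-1$ and on the failure event $\mathcal E^c$, and combine the pieces with Minkowski.

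The one genuine difference is how you handle the low levels $i\le r-2$ on $\mathcal E$.

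The paper argues level by level. Conditional on both clock strings, $U_{i+1}(a)$ is uniform. The two thresholds $U_{i+1}(e^H)$ and $U_{i+1}(e^*)$ lie within $\delta_i=(b_{i+2}\cdots b_r)^{-1}$ of each other. Hence $\|\1_{\mathcal E}(\xi_i^H-\Xi_i^*)\|_p\le\delta_i^{1/p}$, and summing these norms over $i$ gives $C_p=\sum_{k\ge1}2^{-k/p}$.

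You argue pathwise instead:
\begin{itemize}
\item A disagreement at level $i$ forces the prefix agreement $a_j=e_j$ for $i<j<r$.
\item These agreement events are nested.
\item So the whole low-level error is dominated by the run length $L$ of agreement just below level $r$. In fact it is at most $\min(L,r-1)$, so your bound $1+L$ is slack.
\item Since $a$ is independent of the clock, $\Pp(L\ge k\mid e)=\prod_{j=r-k}^{r-1}b_{j+1}^{-1}\le 2^{-k}$ for $1\le k\le r$.
\end{itemize}

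What each route buys: the paper's argument is shorter and purely metric. It uses only the closeness of the two time tails and the uniform law of the endpoint tail, with no combinatorics across levels. Yours exploits the prefix structure to dominate the entire low-level error by a single geometric variable, rather than summing per-level $L^p$ estimates. Both give constants of the same order $O(p)$, and you correctly note that only the law of $a$ is used, not the law of the clock tail.

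One minor point: the decomposition $\cD_H=\sum_{i<m}\xi_i$ from \Cref{lem:digital-decomposition} is stated for $N\ge1$. As the paper notes, it holds trivially when $N_H=0$ because both sides vanish.
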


\begin{proof}
Put $q=B_r$.  The first $r$ mixed-radix digits of $N_H$ are determined by
$R:=N_H\bmod q$.  As explained in \Cref{rem:maximal-coupling}, choose a
uniform random residue $R^*$ in $\{0,\ldots,q-1\}$ on the same probability
space so that $(R,R^*)$ is a maximal coupling.  Explicitly, if
$p_j:=\Pp(R=j)$, this coupling assigns mass $\min\{p_j,q^{-1}\}$ to the
diagonal pair $(j,j)$. Any off-diagonal residual matching described in
\Cref{rem:maximal-coupling} may be used, and the choice is immaterial below.
By
\Cref{lem:time-digits-Haar}, the event
\begin{equation}\label{eq:v71-auto-0052}
 \mathcal E=\{R=R^*\}
\end{equation}
has failure probability $\Pp(\mathcal E^c)=\eps_{H,r}$.  The digits of $R^*$ are
independent uniform digits in positions $0,\ldots,r-1$. Append independent
uniform digits in all positions $i\geq r$ to obtain a Haar clock string
$e^*=(e_i^*)_{i\geq0}$.
Independently of this coupling, choose one Haar endpoint string
$a=(a_i)_{i\ge 0}$, let $Y=\sum_{i\geq0}\frac{a_i}{B_{i+1}}$ be the corresponding endpoint.
In the coupled
realization below, the two statistics use this same endpoint string $a$;
only their clock strings differ.
%
Let $e^H:=c(N_H)$  and 
%
recall \eqref{eq:xi-original}. 
Define 
\begin{align}
 \xi_i^H&:=\xi_i(e^H,Y).\label{eq:v71-auto-0217}
 \end{align}
 Moreover, let
 \begin{align}
 \Xi_i^*&:=\min(a_i,e_i^*)
 +\1_{\{a_i<e_i^*\}}\1_{\{U_{i+1}(e^*)<U_{i+1}(a)\}}
 -\frac{e_i^*(a_i+U_{i+1}(a))}{b_{i+1}}.\label{eq:v71-auto-0218}
\end{align}
Thus $\Xi_i^*$ is the explicit product-model contribution
\eqref{eq:Xi-product-explicit} on the coupled space.  Set
\begin{equation}\label{eq:coupled-product-sum}
    \mathcal P_m:=\sum_{i=0}^{m-1}\Xi_i^*.
\end{equation}  Recall \eqref{H-sandwiched}. Since $N_H<B_m$, the digits of
$e^H$ vanish from level $m$ onward.  The digital decomposition \Cref{lem:digital-decomposition} therefore
gives
\begin{equation}\label{eq:coupled-local-sums}
 \cD_H=\sum_{i=0}^{m-1}\xi_i^{H}.
\end{equation}
When $N_H=0$, \eqref{eq:coupled-local-sums} still holds because both sides vanish.
Using the local normal form
\eqref{eq:local-normal-form} from  \Cref{lem:local-normal-form}, we rewrite \eqref{eq:v71-auto-0217} as

\begin{equation}\label{xi_i^H-rewritten}
\xi_i^H=\min(a_i,e_i^H)
 +\1_{\{a_i<e_i^H\}}\1_{\{U_{i+1}(e^H)<U_{i+1}(a)\}}
 -\frac{e_i^H(a_i+U_{i+1}(a))}{b_{i+1}}.
\end{equation}
On $\mathcal E$, the two clock strings $e^H$ and $e^*$ have the same digits in positions
$0,\ldots,r-1$.  Fix $0\leq i\leq r-2$. In this case, \eqref{eq:v71-auto-0218}  and \eqref{xi_i^H-rewritten} can only differ in the second term, in which the two indicators compare
$U_{i+1}(e^H)$ and $U_{i+1}(e^*)$ with the common endpoint tail
$U_{i+1}(a)$. 
Because the time tails share the digits in positions
$i+1,\ldots,r-1$, iterating \eqref{Ui_and_Ui+1} shows that their distance $|U_{i+1}(e^H)-U_{i+1}(e^*)|$ is
at most
\begin{equation}\label{eq:coupling-tail-distance}
 \delta_i:=\frac1{b_{i+2}\cdots b_r}.
\end{equation}
This implies that the random variables $\xi_i^H$ and $\Xi_i^*$ are different with probability at most $\delta_i$. To see this, note that 
conditional on the two clock strings, $U_{i+1}(a)$ remains uniform on $[0,1)$
and independent of them and,  for fixed $v,v'\in[0,1)$,
\begin{equation}\label{eq:v71-auto-0053}
 \Pp\left(\1_{\{v<U_{i+1}(a)\}}\neq
                 \1_{\{v'<U_{i+1}(a)\}}\right)=|v-v'|.
\end{equation}
It follows from \eqref{eq:coupling-tail-distance} that
\begin{equation}\label{eq:coupling-low-level-Lp}
 \left\|\1_{\mathcal E}(\xi_i^{H}-\Xi_i^*)\right\|_p
 \leq\delta_i^{1/p},
\qquad 0\le i\le r-2.
\end{equation}
Since every radix in \eqref{eq:coupling-tail-distance} is at least two,
\begin{equation}\label{eq:v71-auto-0054}
 \sum_{i=0}^{r-2}(b_{i+2}\cdots b_r)^{-1/p}
 \leq\sum_{k=1}^{\infty}2^{-k/p}=:C_p.
\end{equation}
This also covers the boundary case $r=1$, when the sum over low levels is empty.
For the remaining levels $r-1\le i\le m-1$, the bound \eqref{eq:xi-bound} from  \Cref{lem:local-normal-form} gives 
\begin{equation}\label{eq:v71-auto-0055}
 \sum_{i=r-1}^{m-1}|\xi_i^{H}-\Xi_i^*|
 \leq2\sum_{i=r-1}^{m-1}b_{i+1}
 =2\sum_{j=r}^m b_j
\end{equation}
pointwise.

On the failure event $\mathcal E^c$, we instead use the bound  \eqref{eq:xi-bound} for all levels:
\begin{equation}\label{eq:v71-auto-0056}
 \1_{\mathcal E^c}|\cD_H-\mathcal P_m|
 \leq2\cdot \1_{\mathcal E^c}\sum_{j=1}^m b_j.
\end{equation}
Apply Minkowski's inequality to the low-level part on $\mathcal E$, the
remaining levels on $\mathcal E$, and the whole difference on
$\mathcal E^c$.  Since
$\|\1_{\mathcal E^c}\|_p=\eps_{H,r}^{1/p}$, we obtain
\begin{equation}\label{eq:v71-auto-0057}
 \|\cD_H-\mathcal P_m\|_p
 \leq C_p+2\sum_{j=r}^m b_j
  +2\left(\sum_{j=1}^m b_j\right)\eps_{H,r}^{1/p},
\end{equation}
which proves \eqref{eq:coupling-Lp}.
\end{proof}

The next proposition provides the key  transfer principle: it converts the CLT proved in \Cref{thm:product-clt} for the ideal Haar-clock model $\mathcal P_m$ into a CLT for $D_N(Y)$ under a general random-time law for $N$.

\begin{proposition}[General random-time laws]\label{prop:general-time}
Let $m\geq1$, and let $N$ be any random variable supported on
$\{0,\ldots,B_m-1\}$ and independent of $Y$.  For $1\leq r\leq m$, put
\begin{equation}\label{eq:general-delta}
 \delta_{N,r}:=\TV\bigl(\mathcal L(N\bmod B_r),
       \operatorname{Unif}\{0,\ldots,B_r-1\}\bigr)
\end{equation}
and, for $p\geq1$, put
\begin{equation}\label{eq:general-rho}
 \rho_{N,r}^{(p)}:=
 \frac{1+\sum_{j=r}^m b_j+
       (\sum_{j=1}^m b_j)\delta_{N,r}^{1/p}}
      {\sqrt{S_m}}.
\end{equation}
Then, with $\cD_N=D_N(Y)$, one has
\begin{align}
 |\E[\cD_N]-M_m|&=O\!\left(\sqrt{S_m}\,\rho_{N,r}^{(2)}\right),\label{eq:general-mean}\\
 \bigl|\sqrt{\Var(\cD_N)}-\sqrt{V_m}\bigr|
 &=O\!\left(\sqrt{S_m}\,\rho_{N,r}^{(2)}\right),\label{eq:general-sd}\\
 \dK\left(\frac{\cD_N-M_m}{\sqrt{V_m}}\:,\:\Normal(0,1)\right)
 &=O\!\left(\Delta_m^{1/5}+
       (\rho_{N,r}^{(2)})^{2/3}\right).\label{eq:general-rate}
\end{align}
The implied constants are absolute and the estimates are uniform in the
parameters above.
Along any sequence $m\to\infty$, let $N=N_m$ and choose
$1\leq r=r(m)\leq m$.  Suppose that $\Delta_m\to0$ and that, for every fixed $p>0$, there exists $q>\max\{p,2\}$ such that 
$\rho_{N_m,r(m)}^{(q)}\to0$. Then
\begin{equation}\label{eq:general-moment-convergence}
 \E\left[\left|\frac{\cD_{N_m}-M_m}{\sqrt{V_m}}\right|^p\right]
 \longrightarrow\E[|G|^p],\qquad G\sim\Normal(0,1).
\end{equation}
If $p$ is a positive integer, the corresponding ordinary moments converge
as well.
\end{proposition}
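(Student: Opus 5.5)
The plan is to rerun the coupling argument of \Cref{lem:clock-coupling} with $N$ in place of $N_H$, and then transfer the Haar-model results of \Cref{prop:exact-covariance} and \Cref{thm:product-clt}.

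\textbf{Step 1: the coupling.} Nothing in the proof of \Cref{lem:clock-coupling} used the specific law of $N_H$. It used only three facts:
\begin{enumerate}
\item $N<B_m$, so that $\cD_N=\sum_{i<m}\xi_i(c(N),Y)$ by \Cref{lem:digital-decomposition};
\item a maximal coupling of $N\bmod B_r$ with a uniform residue, whose failure probability is $\delta_{N,r}$ by \eqref{eq:TV-coupling-characterization};
\item the pointwise bounds \eqref{eq:xi-bound}.
\end{enumerate}
Hence the same construction gives a coupled copy of $\mathcal P_m$ with
\[
\|\cD_N-\mathcal P_m\|_p\le C_p\sqrt{S_m}\,\rho^{(p)}_{N,r}.
\]

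\textbf{Step 2: mean and standard deviation.} Take $p=2$. Since $\E[\mathcal P_m]=M_m$ by \eqref{eq:product-exact-mean}, we get $|\E\cD_N-M_m|\le\|\cD_N-\mathcal P_m\|_1\le\|\cD_N-\mathcal P_m\|_2$, which is \eqref{eq:general-mean}. For \eqref{eq:general-sd}, the map $X\mapsto\|X-\E X\|_2$ is $1$-Lipschitz in $L^2$ by Minkowski. Combined with $\Var(\mathcal P_m)=V_m$ from \eqref{eq:product-exact-var}, this gives the bound.

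\textbf{Step 3: Kolmogorov distance.} Put $W=(\mathcal P_m-M_m)/\sqrt{V_m}$ and $W'=(\cD_N-M_m)/\sqrt{V_m}$. By Step 1 and $V_m\ge S_m/240$,
\[
\|W'-W\|_2\le C\rho:=C\rho^{(2)}_{N,r}.
\]
For any $\eta>0$,
\[
\Pp(W'\le t)\le \Pp(W\le t+\eta)+\Pp(|W'-W|>\eta)\le \Phi(t)+\dK(W,\Normal)+\frac{\eta}{\sqrt{2\pi}}+\frac{C^2\rho^2}{\eta^2},
\]
using Chebyshev for the last term, and symmetrically for the lower bound. Choosing $\eta=\rho^{2/3}$ balances $\eta$ against $\rho^2/\eta^2$ and yields $O(\rho^{2/3})$. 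Adding $\dK(W,\Normal)=O(\Delta_m^{1/5})$ from \Cref{thm:product-clt} gives \eqref{eq:general-rate}. All constants are absolute, because $C_2$ and the constants in \Cref{thm:product-clt} are.

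\textbf{Step 4: moments.} Assume $\Delta_m\to0$ and fix $p$ and $q$ with $\rho^{(q)}_{N_m,r(m)}\to0$. Since $\rho^{(2)}\le\rho^{(q)}$ (because $\delta^{1/2}\le\delta^{1/q}$ for $\delta\le1$, $q\ge2$), Step 3 gives $W'_m\Rightarrow\Normal(0,1)$. For uniform integrability, Minkowski gives $\|W'_m\|_q\le\|W_m\|_q+C_q\rho^{(q)}\sqrt{240}$. The first term is bounded uniformly in $m$ by \eqref{eq:v71-auto-0051}, and the second tends to $0$. So $\sup_m\E|W'_m|^q<\infty$ with $q>p$, hence $|W'_m|^p$ is uniformly integrable. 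Together with convergence in law, this gives \eqref{eq:general-moment-convergence}. For integer $p$ the same uniform integrability applies to $(W'_m)^p$, since $|W'_m|^p$ dominates it, so the ordinary moments converge too.

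The main obstacle is Step 1. One must check that the proof of \Cref{lem:clock-coupling} really uses nothing specific to $N_H$: in particular, that $m$ need not equal $m(H)$ and only $N<B_m$ matters, and that the case $N=0$ is harmless. The rest is routine once the $L^2$ coupling is in place.
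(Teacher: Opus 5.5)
Your proposal is correct and follows the paper's proof essentially step for step. The paper also reruns the coupling of \Cref{lem:clock-coupling} with failure probability $\delta_{N,r}$, using only $N<B_m$ and the prefix mismatch probability, to get $\|\cD_N-\mathcal P_m\|_p\le C_p\sqrt{S_m}\,\rho^{(p)}_{N,r}$ for $p\ge2$; it then derives the mean and standard-deviation bounds from the $L^2$ case, the Kolmogorov bound by the same event-inclusion and Chebyshev argument with $\eta=(\rho^{(2)}_{N,r})^{2/3}$, and moment convergence from a uniform $L^q$ bound via Minkowski and \eqref{eq:v71-auto-0051}.
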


\begin{proof}
Maximally couple $N\bmod B_r$ to a uniform residue.  The argument in
\Cref{lem:clock-coupling} uses only the probability that the first $r$
clock digits fail to agree; it does not use the law of the upper digits of
$N$.  With this failure probability now equal to $\delta_{N,r}$, the same
level-by-level construction and \eqref{eq:v71-auto-0057} give
\begin{align}
 \|D_N(Y)-\mathcal P_m\|_p
 &\leq C_p+2\sum_{j=r}^m b_j
  +2\left(\sum_{j=1}^m b_j\right)\delta_{N,r}^{1/p}\notag\\
 &\leq C_p\sqrt{S_m}\,\rho_{N,r}^{(p)},
 \qquad p\geq2.\label{eq:general-coupling-Lp}
\end{align}
Indeed, \eqref{eq:v71-auto-0054} defines
$C_p=\sum_{k=1}^{\infty}2^{-k/p}$, and for $p\geq2$ one has
$C_p\geq C_2=1+\sqrt2>2$.
Here the digital decomposition also covers $N=0$, since then all clock
digits and all local terms vanish.

Now take $p=2$ in \eqref{eq:general-coupling-Lp}.  By
\eqref{eq:product-exact-mean} and Cauchy--Schwarz,
\begin{align}
 |\E[\cD_N]-M_m|
 &=|\E[\cD_N-\mathcal P_m]|\leq\|\cD_N-\mathcal P_m\|_2
 \leq C_2\sqrt{S_m}\,\rho_{N,r}^{(2)}
 ,\label{eq:v71-auto-0220}
\end{align}
which is \eqref{eq:general-mean}.  

For the standard deviations, write
$\sigma_N:=\|\cD_N-\E[\cD_N]\|_2$ and
$\sqrt{V_m}=\|\mathcal P_m-M_m\|_2$ by
\eqref{eq:product-exact-var}.  The reverse triangle inequality in
$L^2$ and one more use of Cauchy--Schwarz give
\begin{align}
 |\sigma_N-\sqrt{V_m}|
 &\leq\|(\cD_N-\E[\cD_N])-(\mathcal P_m-M_m)\|_2\label{eq:v71-auto-0221}\\
 &\leq\|\cD_N-\mathcal P_m\|_2
       +|\E[\cD_N-\mathcal P_m]|\leq2C_2\sqrt{S_m}\,\rho_{N,r}^{(2)}
 .\label{eq:v71-auto-0223}
\end{align}
This proves \eqref{eq:general-sd}.

For the distributional  \eqref{eq:general-rate}, set
\begin{equation}\label{eq:v71-auto-0058}
 X:=\frac{\mathcal P_m-M_m}{\sqrt{V_m}},
 \qquad
 Y_N:=\frac{\cD_N-M_m}{\sqrt{V_m}}.
\end{equation}
The variance comparison \eqref{eq:variance-comparison} and
\eqref{eq:general-coupling-Lp} imply
\begin{equation}\label{eq:general-normalized-L2}
 \|X-Y_N\|_2
 \leq\sqrt{240}\,C_2\rho_{N,r}^{(2)}.
\end{equation}
For any $\eta>0$, the inclusions
\begin{equation}\label{eq:v71-auto-0059}
 \{X\leq t-\eta\}\setminus\{|X-Y_N|>\eta\}
 \subseteq\{Y_N\leq t\}
 \subseteq\{X\leq t+\eta\}\cup\{|X-Y_N|>\eta\}
\end{equation}
together with the bound $\sup_x\Phi'(x)=1/\sqrt{2\pi}$ and Chebyshev's
inequality, yield
\begin{equation}\label{eq:v71-auto-0060}
 \dK(Y_N,\Normal(0,1))
 \leq\dK(X,\Normal(0,1))+\frac{\eta}{\sqrt{2\pi}}
   +\frac{240C_2^2(\rho_{N,r}^{(2)})^2}{\eta^2}.
\end{equation}
This event-inclusion argument is a quantitative one-dimensional analogue,
in the present Gaussian setting, of a standard theorem: given two sequences of random variables on a metric space, convergence in law of the first sequence plus convergence to zero in probability of the difference of the two sequences imply convergence in law of the second sequence; cf.~\cite[Theorem~3.1, p.~27]{Billingsley}.
Since $\rho_{N,r}^{(2)}>0$, choose
$\eta=(\rho_{N,r}^{(2)})^{2/3}$ in \eqref{eq:v71-auto-0060}.  
We obtain the explicit transfer estimate
\begin{equation}\label{eq:general-explicit-transfer-rate}
 \dK(Y_N,\Normal(0,1))
 \leq\dK(X,\Normal(0,1))
 +\left(\frac1{\sqrt{2\pi}}+240C_2^2\right)
   (\rho_{N,r}^{(2)})^{2/3}.
\end{equation}
Combining this with \eqref{eq:product-rate} of \Cref{thm:product-clt} proves
\eqref{eq:general-rate}.

Finally, take $N=N_m$ and $r=r(m)$ along a sequence satisfying the moment
assumptions in the statement.  Since
$0\leq\delta_{N,r}\leq1$ and $q>2$, we have
$\rho_{N,r}^{(2)}\leq\rho_{N,r}^{(q)}\to0$ along the sequence.  Thus
\eqref{eq:general-rate} gives convergence in distribution to the standard
normal.  On the other hand, \eqref{eq:general-coupling-Lp} with exponent
$q$ and \eqref{eq:variance-comparison} give
\begin{align}
 \left\|\frac{\cD_N-\mathcal P_m}{\sqrt{V_m}}\right\|_q
 &\leq\sqrt{240}\,C_q\rho_{N,r}^{(q)},
 \label{eq:v71-auto-0224}
\end{align}
with $C_q$ as  in
\eqref{eq:v71-auto-0054}.  Now set
\begin{equation*}
 A_q:=\sup_m\left\|\frac{\mathcal P_m-M_m}{\sqrt{V_m}}\right\|_q,
 \qquad
 R_q:=\sup_m\rho_{N_m,r(m)}^{(q)}.
\end{equation*}
Then $A_q<\infty$ by \eqref{eq:v71-auto-0051}, while $R_q<\infty$ because
$\rho_{N_m,r(m)}^{(q)}\to0$.  Minkowski's inequality and
\eqref{eq:v71-auto-0224} therefore give the concrete bound
\begin{equation}
 \sup_m\E\left[\left|\frac{\cD_N-M_m}{\sqrt{V_m}}\right|^q\right]
 \leq\left(A_q+\sqrt{240}\,C_qR_q\right)^q<\infty.
 \label{eq:v71-auto-0225}
\end{equation}
Because $q>p$, the $p$th absolute powers are uniformly integrable.
Convergence in distribution therefore gives convergence of the $p$th
absolute moments. When $p$ is an integer, the same argument applied to
$x\mapsto x^p$ gives convergence of the ordinary moments as well.
\end{proof}

\begin{remark}\label{rem:block-uniform-time}
For example, let $N=R_r+B_rK$, where $R_r$ is uniform on
$\{0,\ldots,B_r-1\}$ and independent of an otherwise arbitrary
integer-valued random variable $K$, subject
only to $0\leq N<B_m$.  Then $\delta_{N,r}=0$ and hence $\rho^{(p)}_{N,r}=\frac{1+\sum_{j=r}^m b_j}{\sqrt{S_m}}$. 
Therefore, for the comparison of \(\mathcal D_N\) with the Haar-clock model \(\mathcal P_m\), the law of the upper clock digits (with indices greater than $r$) is irrelevant, provided their total possible contribution \(\sum_{j=r}^m b_j\) is negligible relative to \(\sqrt{S_m}\).
In other words, along a sequence \(m\to\infty\), with \(r=r(m)\le m\), the upper digits may have an arbitrary law, as far as the clock transfer is concerned, provided
$\sum_{j=r(m)}^m b_j=o(\sqrt{S_m})$. In fact, this assumption implies $\rho^{(p)}_{N,r(m)}\to 0$ and the latter, together with   \eqref{eq:v71-auto-0224} and \eqref{eq:variance-comparison}, yields $\left\|\frac{\cD_N-\mathcal P_m}{\sqrt{V_m}}\right\|_p\to 0$.
\end{remark}
\begin{remark}\label{rk:H=B_r-sequence}
Suppose that $H$ tends to infinity along the sparse sequence
$H_k=B_k$, $k\geq1$.
Then $m(H_k)=k$ and $N_{H_k}$ is uniform on
$\{0,1,\ldots,B_k-1\}$.  Taking $m=r=k$ in
\Cref{rem:block-uniform-time}, we have
$\delta_{N_{H_k},k}=0$ and
\[
 \rho_{N_{H_k},k}^{(p)}=\frac{1+b_k}{\sqrt{S_k}}.
\]
If $\Delta_k\to0$, then
$b_k/\sqrt{S_k}\leq\Delta_k^{1/4}\to0$.  Thus the product-model CLT
in \Cref{thm:product-clt} transfers to give
\[
 \frac{\cD_{H_k}-M_k}{\sqrt{V_k}}
 \Longrightarrow \Normal(0,1)
 \qquad(k\to\infty).
\]
\end{remark}

Now, to prove \Cref{thm:main} we use \Cref{prop:general-time} with $N=N_H$ and leverage the  assumptions
 \eqref{eq:main-conditions}.

\begin{proof}[Proof of \Cref{thm:main}]
Put $m=m(H)$ and apply \Cref{prop:general-time} with $N=N_H$.
By \eqref{H-sandwiched}, this random variable is supported on
$\{0,\ldots,B_m-1\}$, and it is independent of $Y$.  Equations
\eqref{eq:v71-auto-0015} and \eqref{eq:time-prefix-TV}, together with the
definition \eqref{eq:general-delta}, give
$\delta_{N_H,r}=\eps_{H,r}$.  Hence the definitions
\eqref{eq:general-rho} and \eqref{eq:rho-p} give
$\rho_{N_H,r}^{(p)}=\rho_{H,r,m}^{(p)}$ for every $p\geq1$.
Consequently, \eqref{eq:general-mean}--\eqref{eq:general-rate} give
\eqref{eq:mean-bound}--\eqref{eq:main-rate}.  Since the implied constants
in \Cref{prop:general-time} are absolute and uniform, taking their maximum
gives the single constant $C$ in the statement.

Now suppose that \eqref{eq:main-conditions} holds.  For each $H$, choose
an index $r(H)$ attaining the minimum of
$\rho_{H,r,m(H)}^{(2)}$ over $1\leq r\leq m(H)$.  This defect tends to
zero, so \eqref{eq:mean-bound} gives \eqref{eq:main-mean}.  Likewise,
\eqref{eq:sd-bound} gives
$|\sqrt{\Var(\cD_H)}-\sqrt{V_m}|=o(\sqrt{S_m})$, while
\eqref{eq:variance-comparison} implies
$\sqrt{\Var(\cD_H)}+\sqrt{V_m}=O(\sqrt{S_m})$.  Multiplying these two
estimates proves \eqref{eq:main-var}.  Finally, the bound \eqref{eq:main-rate}, the assumption
$\Delta_m\to0$, and the chosen defect tending to zero prove
\eqref{eq:main-explicit}.
\end{proof}

\begin{proof}[Proof of \Cref{thm:moments}]
Let $m=m(H)$ and take the indices $r=r(H)$ from the hypothesis.  By
\Cref{lem:clock-coupling} and \eqref{eq:variance-comparison}, we have the limit
\begin{equation}\label{eq:v71-auto-0066}
 \left\|\frac{\cD_H-\mathcal P_m}{\sqrt{V_m}}\right\|_q
 \leq C_q\rho_{H,r,m}^{(q)}\longrightarrow0.
\end{equation}
The Haar-model moment bound \eqref{eq:v71-auto-0051} shows that
\begin{equation}\label{eq:v71-auto-0067}
 \sup_H\E\left[\left|\frac{\mathcal P_m-M_m}{\sqrt{V_m}}\right|^q\right]<\infty.
\end{equation}
By Minkowski's inequality, the normalized random-time statistics
\begin{equation}\label{eq:v71-auto-0068}
 Z_H:=\frac{\cD_H-M_m}{\sqrt{V_m}}
\end{equation}
are therefore uniformly bounded in $L^q$.  The assumptions
\eqref{eq:main-conditions} and \Cref{thm:main} give
$Z_H\Longrightarrow G$, where $G\sim\Normal(0,1)$.  Since $q>p$, the
family $(|Z_H|^p)_H$ is uniformly integrable.  It follows that
\begin{equation}\label{eq:v71-auto-0069}
 \E[|Z_H|^p]\longrightarrow\E[|G|^p],
\end{equation}
which is \eqref{eq:moment-convergence}.  If $p$ is a positive integer,
then $Z_H^p\Longrightarrow G^p$ and
$|Z_H^p|=|Z_H|^p$ is uniformly integrable.  Hence
$\E[Z_H^p]\to\E[G^p]$, proving
\eqref{eq:ordinary-moment-convergence}.
\end{proof}

\begin{proof}[Proof of \Cref{cor:old-condition}]
Put $m=m(H)$.  It is enough to consider sufficiently large $m$, so that all the indices
introduced below are positive.  Let
$L=\lceil16\log_2m\rceil$ and $r=m-L$.  Since
$B_{m-1}<H\leq B_m$, \Cref{lem:time-digits-Haar} and $b_j\geq2$ give
\begin{equation}\label{eq:old-epsilon}
 \eps_{H,r}\leq\frac{B_r}{H}
 <\frac{B_r}{B_{m-1}}
 =\frac1{b_{r+1}\cdots b_{m-1}}
 \leq2^{-(L-1)}.
\end{equation}
Note that there are $L+1$ indices in the range $r\leq j\leq m$.  H\"older's
inequality (with conjugate exponents $4$ and $4/3$) gives
\begin{equation}\label{eq:old-top}
 \frac{\sum_{j=r}^m b_j}{\sqrt{S_m}}
 \leq (L+1)^{3/4}
       \frac{(\sum_{j=r}^m b_j^4)^{1/4}}{\sqrt{S_m}}
 \leq (L+1)^{3/4}\Delta_m^{1/4}=O((\log m)^{3/4}\Delta_m^{1/4}).
\end{equation}
The hypothesis $\Delta_m=O(m^{-1/2-\eta})$ and
\eqref{eq:old-top} therefore give
\[
 \frac{\sum_{j=r}^m b_j}{\sqrt{S_m}}
 =O((\log m)^{3/4}m^{-1/8-\eta/4}).
\]
On the other hand, by Cauchy--Schwarz,
\begin{equation}\label{eq:old-all-levels}
 \frac{\sum_{j=1}^m b_j}{\sqrt{S_m}}\leq\sqrt m.
\end{equation}
Combining \eqref{eq:old-epsilon}--\eqref{eq:old-all-levels} with the
definition \eqref{eq:rho-p}, and using
$2^{-(L-1)/2}=O(m^{-8})$ and $S_m\geq4m$, yields
\begin{equation}\label{eq:old-rho}
 \rho_{H,r,m}^{(2)}
 =O(m^{-1/2})+O((\log m)^{3/4}m^{-1/8-\eta/4})+O(m^{-15/2}).
\end{equation}
The hypothesis also gives $\Delta_m\to0$, while \eqref{eq:old-rho} tends
to zero as $m\to\infty$.  Thus the two conditions in
\eqref{eq:main-conditions} hold uniformly in $H$.

We note that the hypothesis can be nonvacuous only for
$\eta\leq1/2$: indeed, Cauchy--Schwarz gives
$\Delta_m\geq1/m$.  We may therefore use $0<\eta\leq1/2$ when comparing
the three terms in \eqref{eq:old-rho}. Since $\tfrac{1}{8}+\tfrac{\eta}{4}\le\tfrac{1}{4}$, the second term in \eqref{eq:old-rho} dominates the other two. We obtain
\begin{equation}\label{eq:v71-auto-0094}
 (\rho_{H,r,m}^{(2)})^{2/3}
 =O((\log m)^{1/2}m^{-1/12-\eta/6}),
\end{equation}
while
\begin{equation}\label{eq:v71-auto-0095}
 \Delta_m^{1/5}=O(m^{-1/10-\eta/5})
 =O(m^{-1/12-\eta/6}).
\end{equation}
Substitution in \eqref{eq:main-rate} proves \eqref{eq:old-rate}.

Finally, fix a moment order $p>0$ and choose
$q>\max\{p,2\}$.  Repeat the argument with
$L=\lceil q\log_2m\rceil$.  The upper-level term in
\eqref{eq:old-top} still tends to zero, and the coupling-failure term is now
bounded by
\begin{equation}\label{eq:v71-auto-0096}
 \sqrt m\,2^{-(L-1)/q}=o(1).
\end{equation}
Hence $\rho_{H,r,m}^{(q)}\to0$, uniformly in $H$, and
\Cref{thm:moments} proves the stated absolute and ordinary moment
convergences.
\end{proof}

\begin{proof}[Proof of \Cref{cor:regular-radices}]
Put $m=m(H)$.
The hypothesis includes every bounded radix sequence: if
$2\leq b_j\leq K$, take $\alpha=0$, $c_-=2$, and $c_+=K$.
For every $\beta\geq0$, integral comparison and the bound $j\leq m$ give
\begin{equation}\label{eq:regular-power-sum}
 \frac{m^{\beta+1}}{\beta+1}
 \leq\sum_{j=1}^m j^\beta\leq m^{\beta+1}.
\end{equation}
Consequently, the lower bound in \eqref{eq:regular-radix-bounds}, with
$\beta=2\alpha$, and the upper bound, with $\beta=4\alpha$, give
\begin{equation}\label{eq:v71-auto-0097}
 S_m\geq\frac{c_-^2}{2\alpha+1}m^{2\alpha+1},
 \qquad
 Q_m\leq c_+^4m^{4\alpha+1}.
\end{equation}
An upper bound for $S_m$ and a
lower bound for $Q_m$ are not needed.  Define
\begin{equation}\label{eq:regular-constants}
 A_0:=\frac{\sqrt{2\alpha+1}}{c_-},\qquad
 R_0:=\frac{c_+}{c_-}\sqrt{2\alpha+1},\qquad
 D_0:=\left(\frac{c_+}{c_-}\right)^4(2\alpha+1)^2.
\end{equation}
Cauchy--Schwarz gives $S_m^2\leq mQ_m$, so \eqref{eq:v71-auto-0097}
yields the explicit bounds
\begin{equation}\label{eq:regular-delta}
 \frac1m\leq\Delta_m\leq\frac{D_0}{m}.
\end{equation}
For $m\geq8$, let $L:=\lceil2\log_2m\rceil+1$ and $r:=m-L$.
Then $1\leq r\leq m$.  As in \eqref{eq:old-epsilon}, the standing
assumption $b_j\geq2$ gives, uniformly for $B_{m-1}<H\leq B_m$,
\begin{equation}\label{eq:v71-auto-0098}
 \eps_{H,r}\leq2^{-(L-1)}\leq m^{-2}.
\end{equation}
There are $L+1$ indices in the range $r\leq j\leq m$.  The upper radix
bound in \eqref{eq:regular-radix-bounds} and the lower bound for $S_m$ in
\eqref{eq:v71-auto-0097} therefore give
\begin{equation}\label{eq:v71-auto-0099}
 \frac{\sum_{j=r}^m b_j}{\sqrt{S_m}}
 \leq R_0\frac{L+1}{\sqrt m}.
\end{equation}
Moreover,
\begin{equation}\label{eq:regular-remaining-terms}
 \frac1{\sqrt{S_m}}\leq\frac{A_0}{\sqrt m},
 \qquad
 \frac{\sum_{j=1}^m b_j}{\sqrt{S_m}}\leq\sqrt m,
\end{equation}
where the second inequality is Cauchy--Schwarz.  Substitution of
\eqref{eq:v71-auto-0098}--\eqref{eq:regular-remaining-terms} into
\eqref{eq:rho-p}, followed by $L+1\leq2\log_2m+3$, gives
\begin{equation}\label{eq:regular-rho}
 \rho_{H,r,m}^{(2)}
 \leq\frac{A_0+1+R_0(2\log_2m+3)}{\sqrt m}.
\end{equation}
In particular, this tends to zero uniformly in $H$, and
\eqref{eq:regular-delta} verifies the first condition in
\eqref{eq:main-conditions}.

Let $C_{\mathrm{CLT}}$ be the absolute constant in
\eqref{eq:main-rate}.  Equations \eqref{eq:regular-delta} and
\eqref{eq:regular-rho} give, for $m\geq8$,
\begin{align}
 \dK\left(\frac{\cD_H-M_m}{\sqrt{V_m}},\Normal(0,1)\right)
 \leq C_{\mathrm{CLT}}\Bigl(&D_0^{1/5}m^{-1/5}\notag\\
 &+\left(A_0+1+R_0(2\log_2m+3)\right)^{2/3}m^{-1/3}\Bigr).
 \label{eq:v71-auto-0100}
\end{align}
The elementary inequality
$\log_2m\leq5m^{1/5}/(e\log2)$ for $m\geq1$ shows that, with
\begin{equation}\label{eq:regular-rate-constant}
 E_0:=A_0+1+R_0\left(3+\frac{10}{e\log2}\right),
 \qquad
 C_{\mathrm{reg}}:=\max\left\{7^{1/5},
 C_{\mathrm{CLT}}\bigl(D_0^{1/5}+E_0^{2/3}\bigr)\right\},
\end{equation}
the right-hand side of \eqref{eq:v71-auto-0100} is at most
$C_{\mathrm{reg}}m^{-1/5}$.  For $m\leq7$, the same conclusion follows
from the trivial bound $\dK\leq1$.  This proves \eqref{eq:regular-rate}.

Finally, fix a moment order $p>0$, set $q=p+2$, and, for all sufficiently
large $m$, take
\begin{equation}\label{eq:v71-auto-0101}
 L_q:=\lceil q\log_2m\rceil+1,
 \qquad r_q:=m-L_q.
\end{equation}
Then \eqref{eq:v71-auto-0098} with $L_q,r_q$ in place of $L,r$ gives
$\eps_{H,r_q}\leq m^{-q}$.  Repeating
\eqref{eq:v71-auto-0099}--\eqref{eq:regular-rho} yields
\begin{equation}\label{eq:regular-rho-q}
 \rho_{H,r_q,m}^{(q)}
 \leq\frac{A_0+1+R_0(q\log_2m+3)}{\sqrt m}
 \longrightarrow0
\end{equation}
uniformly in $H$.  Since $q>\max\{p,2\}$, the moment assertions follow
from \Cref{thm:moments}.
\end{proof}

\section{Irregular radix growth: a Gaussian example and a non-Gaussian obstruction}
\label{sec:irregular-radices}

This section uses two explicit unbounded and irregular radix sequences to
show both the scope and the limitations of our results.
We first study a sequence of radices that differs from a constant sequence only at square indices.  It violates the assumption 
\eqref{eq:old-condition} of \Cref{cor:old-condition} 
and is not covered by
\Cref{cor:regular-radices}, but we can still use  \Cref{thm:main} to obtain quantitative Gaussian
limit uniformly through all time horizons.  We then construct a sparse
dominant-radix sequence satisfying $\sum_{j\leq m}b_j=O(m)$ for which a
single level controls the fluctuations along a subsequence, producing an
explicit nontrivial non-Gaussian limit.  The second example shows why some
Feller-type condition is indispensable for a general Gaussian theorem.

\begin{example}\label{ex:square-spikes}
Let
\begin{equation}\label{eq:square-spike-radices}
 b_{k^2}=k+1\quad(k\geq1),\qquad
 b_j=2\quad\text{when $j$ is not a square}.
\end{equation}
Then, uniformly for $H\geq2$, with $m=m(H)$,
\begin{equation}\label{eq:square-spike-rate}
 \dK\left(
 \frac{\cD_H-M_m}{\sqrt{V_m}}\:,\:\Normal(0,1)\right)
 =O\left(m^{-1/10}\right).
\end{equation}
Moreover, the absolute moments of every positive order, and the ordinary
moments of every positive integer order, converge to the corresponding
Gaussian moments.  The deterministic normalization may also be simplified:
\begin{equation}\label{eq:square-spike-simple-rate}
 \dK\left(
 \frac{\cD_H-m/6}{m^{3/4}/\sqrt{720}}\:,\:\Normal(0,1)\right)
 =O\left(m^{-1/10}\right).
\end{equation}
The same moment conclusions hold with this normalization.  Since
$m(H)\sim\log_2H$, both rates are $O((\log H)^{-1/10})$.
\end{example}

\begin{proof}
Put $n=\lfloor\sqrt m\rfloor$, and write
\begin{equation}\label{eq:square-spike-harmonic-sums}
 h_n^{(a)}:=\sum_{\ell=1}^n\frac1{\ell^a}
 \qquad(a>0).
\end{equation}
Direct summation gives
\begin{align}
 S_m
 &=4m+\sum_{k=1}^n\bigl((k+1)^2-4\bigr)
   =4m+\frac{n(2n^2+9n-11)}6,
   \label{eq:square-spike-S}\\
 Q_m
 &=16m+\sum_{k=1}^n\bigl((k+1)^4-16\bigr)=16m+
 \frac{6n^5+45n^4+130n^3+180n^2-361n}{30}.
 \label{eq:square-spike-Q}
\end{align}
Since $n=\sqrt m+O(1)$, it follows that
\begin{equation}\label{eq:square-spike-delta}
 S_m=\frac13m^{3/2}+O(m),\qquad
 Q_m=\frac15m^{5/2}+O(m^2),\qquad
 \Delta_m=\frac9{5\sqrt m}+O(m^{-1}).
\end{equation}
The tower heights and normalization constants can also be computed
explicitly:
\begin{align}
 B_m&=2^{m-n}(n+1)!,\label{eq:square-spike-B}\\
 M_m&=\frac m8+\frac{n^2}{24}
       -\frac{h_{n+1}^{(1)}-1}{12},\label{eq:square-spike-M}\\
 V_m&=\frac{5m}{192}
 +\frac{n^3+\frac92n^2-\frac94n
       -13(h_{n+1}^{(2)}-1)}{720}
 +\frac{1-B_m^{-1}}{12}.
 \label{eq:square-spike-V}
\end{align}
In particular,
\begin{equation}\label{eq:square-spike-MV-asymptotic}
 M_m=\frac m6+O(\sqrt m),\qquad
 V_m=\frac1{720}m^{3/2}+O(m).
\end{equation}
Furthermore, Stirling's formula and \eqref{eq:square-spike-B} give
\begin{equation}\label{eq:square-spike-depth}
 \log B_m=m\log2+O(\sqrt m\log m),
\end{equation}
which implies $m(H)\sim\log_2H$.

We next estimate the time defect.  For all sufficiently large $m$, let
\begin{equation}\label{eq:square-spike-truncation}
 L:=\lceil2\log_2m\rceil+1,\qquad r:=m-L.
\end{equation}
Uniformly for $B_{m-1}<H\leq B_m$, the argument in
\eqref{eq:v71-auto-0098} gives
\begin{equation}\label{eq:square-spike-epsilon}
 \eps_{H,r}\leq2^{-(L-1)}\leq m^{-2}.
\end{equation}
Because consecutive squares near $m$ are separated by order $\sqrt m$, the
interval $[r,m]$, whose length is $O(\log m)$, contains at most one square
when $m$ is sufficiently large.  Hence
\begin{equation}\label{eq:square-spike-top}
 \sum_{j=r}^m b_j
 \leq2(L+1)+n+1=O(\sqrt m).
\end{equation}
On the other hand,
\begin{equation}\label{eq:square-spike-all-levels}
 \sum_{j=1}^m b_j
 =2m+\sum_{k=1}^n(k-1)
 =2m+\frac{n(n-1)}2=O(m).
\end{equation}
Equations \eqref{eq:square-spike-S},
\eqref{eq:square-spike-epsilon}, \eqref{eq:square-spike-top}, and
\eqref{eq:square-spike-all-levels} therefore yield
\begin{equation}\label{eq:square-spike-rho}
 \rho_{H,r,m}^{(2)}
 =\frac{1+O(\sqrt m)+O(m)m^{-1}}
        {(\frac1{3}m^{3/2}+O(m))^{1/2}}
 =O(m^{-1/4}),
\end{equation}
uniformly in $H$.  Substitution of \eqref{eq:square-spike-delta} and
\eqref{eq:square-spike-rho} into \eqref{eq:main-rate} gives
\begin{equation}\label{eq:square-spike-rate-proof}
 \dK\left(
 \frac{\cD_H-M_m}{\sqrt{V_m}}\:,\:\Normal(0,1)\right)
 =O\left(m^{-1/10}+m^{-1/6}\right)
 =O(m^{-1/10}),
\end{equation}
proving \eqref{eq:square-spike-rate}; the finitely many remaining values of
$m$ are absorbed by enlarging the implied constant.

To prove moment convergence, fix $p>0$, choose $q>\max\{p,2\}$, and instead
take
\begin{equation}\label{eq:square-spike-moment-truncation}
 L_q:=\lceil q\log_2m\rceil+1,\qquad r_q:=m-L_q.
\end{equation}
Then $\eps_{H,r_q}\leq m^{-q}$.  Since $L_q=O_q(\log m)$, the same argument
as above gives
\begin{equation}\label{eq:square-spike-rho-q}
 \rho_{H,r_q,m}^{(q)}=O_q(m^{-1/4})\longrightarrow0
\end{equation}
uniformly in $H$.  The moment assertions consequently follow from
\Cref{thm:moments}.

Finally, \eqref{eq:square-spike-MV-asymptotic} gives
\begin{equation}\label{eq:square-spike-affine-errors}
 \frac{M_m-m/6}{m^{3/4}/\sqrt{720}}=O(m^{-1/4}),
 \qquad
 \frac{\sqrt{V_m}}{m^{3/4}/\sqrt{720}}
 =1+O(m^{-1/2}).
\end{equation}
An affine change of normalization in \eqref{eq:square-spike-rate} therefore
proves \eqref{eq:square-spike-simple-rate}, and preserves the asserted moment
convergence.
\end{proof}

\begin{remark}\label{rem:square-spikes-literature}
The estimate \eqref{eq:square-spike-delta} shows that
\eqref{eq:old-condition} fails for every $\eta>0$. 
The example is also outside the scope of
\Cref{cor:regular-radices}: its radices are unbounded along the squares but
equal to $2$ along infinitely many indices.  
\end{remark}

We now turn to the opposite regime, in which a sparse radix dominates the
quadratic scale.  The first condition in \eqref{eq:main-conditions} prevents
any single radix from contributing a non-negligible fraction of the quadratic
scale \(S_m\), asymptotically as $m\to\infty$. Indeed, by
\Cref{lem:feller-equivalence}, it is equivalent to the Feller-type condition
\begin{equation*}
 \lim_{m\to\infty}\max_{1\leq j\leq m}\frac{b_j^2}{S_m}=0.
\end{equation*}
The stronger sufficient condition in \Cref{cor:old-condition} implies this
condition.  The next example lies at the opposite extreme along a sparse
sequence of levels $m=n_k$.  Along those levels,
$S_m\sim b_m^2$ and $Q_m\sim b_m^4$, so $\Delta_m\to1$.  Moreover, at
$H=B_m$ one has $\eps_{H,r}=0$, but for every $r\leq m$,
\begin{equation*}
 \rho_{B_m,r,m}^{(2)}\geq\frac{b_m}{\sqrt{S_m}}\longrightarrow1
\end{equation*}
along $m=n_k$. Thus both assumptions in \eqref{eq:main-conditions} fail.  The resulting
non-Gaussian limit shows that some anti-dominance hypothesis is genuinely
needed for a Gaussian theorem covering arbitrary radix sequences. We do not claim 
not assert that our particular sufficient conditions are necessary in every
individual case.  Crucially, the example still satisfies
$\sum_{j\leq m}b_j=O(m)$, and therefore disproves the mixed-radix assertion
in \cite[Remark~1.2]{LevinMerzbach} under its stated hypothesis alone.

\begin{example}
\label{ex:dominant-radix}
For $k\ge1$, let  $n_k:=2^{2^k}$. Define
\begin{equation}\label{eq:dominant-radices}
 b_j:=
 \begin{cases}
  n_k,&j=n_k\text{ for some }k\geq1,\\
  2,&\text{otherwise}.
 \end{cases}
\end{equation}
Then
\begin{equation}\label{eq:dominant-average-bound}
 \sum_{j\leq m}b_j\leq4m
\end{equation}
for every $m\geq1$.  Nevertheless, along $m=n_k$ the Feller condition
fails maximally: in fact,
$S_m\sim b_m^2$, $Q_m\sim b_m^4$, and hence $\Delta_m\to1$.  Along the
complete tower heights $H_k=B_{n_k}$,
\begin{equation}\label{eq:dominant-limit}
 \frac{\cD_{H_k}-\E[\cD_{H_k}]}
      {\sqrt{\Var(\cD_{H_k})}}
 \Longrightarrow
 \sqrt{240}\left(\min(R,S)-RS-\frac1{12}\right),
\end{equation}
where $R,S$ are independent uniform random variables.  The limit is bounded
and nondegenerate, hence is not Gaussian.
\end{example}

\begin{proof}
For $m<n_1$, it is immediate to verify \eqref{eq:dominant-average-bound}.  For $n_k\leq m<n_{k+1}$, the rapid growth of $(n_k)$ gives
$\sum_{\ell\leq k}n_\ell\leq2n_k\leq2m$.  Hence
\begin{equation}\label{eq:dominant-average-proof}
 \sum_{j\leq m}b_j
 =2m+\sum_{n_\ell\leq m}(n_\ell-2)
 \leq4m,
\end{equation}
which proves \eqref{eq:dominant-average-bound}.

We now restrict to $m=n_k$.  Since $n_\ell^2=n_{\ell+1}$,
\begin{align}
 S_{m-1}
 &=4(m-1)+\sum_{\ell<k}(n_\ell^2-4)=O(m),
 \label{eq:dominant-lower-S}\\
 Q_{m-1}
 &=16(m-1)+\sum_{\ell<k}(n_\ell^4-16)=O(m^2).
 \label{eq:dominant-lower-Q}
\end{align}
As $b_m=m$, these estimates imply
$S_m\sim b_m^2$, $Q_m\sim b_m^4$, and $\Delta_m\to1$ along this
subsequence.

Let $e^{(m)}:=c(N_{B_m})$, let
$a(Y)=(a_i(Y))_{i\geq0}$ be the endpoint digit string of $Y$.  Using \eqref{eq:xi-original}, we  set
\begin{equation}\label{eq:v71-auto-0105}
 \xi_i^{(m)}:=\xi_i(e^{(m)},Y).
\end{equation}
Since $N_{B_m}<B_m$, the clock digits above level $m-1$ vanish.  Therefore the
digital decomposition contains only the levels $0,\ldots,m-1$.  Put
$\mathcal R_m:=\sum_{i=0}^{m-2}\xi_i^{(m)}$.

The digits $e_0^{(m)},\ldots,e_{m-1}^{(m)}$ are independent and uniform.
Append independent uniform digits in all positions $i\geq m$ to obtain a
Haar clock string $e^*$.  Using the same endpoint string $a(Y)$, let
$\Xi_i^*$ be the corresponding product-model contributions.  Since the two
clock strings agree through level $m-1$, the argument in
\eqref{eq:coupling-low-level-Lp}--\eqref{eq:v71-auto-0054}, with $r=m$ and
$p=2$, gives
\begin{equation}\label{eq:dominant-low-level-coupling}
 \left\|\mathcal R_m-\sum_{i=0}^{m-2}\Xi_i^*\right\|_2\leq C_2.
\end{equation}
The sum on the right is a copy of $\mathcal P_{m-1}$.  Hence
\Cref{prop:exact-covariance}, \eqref{eq:variance-comparison}, and
\eqref{eq:dominant-lower-S} yield the  bound
\begin{equation}\label{eq:dominant-lower-levels}
 \|\mathcal R_m-\E[\mathcal R_m]\|_2
 \leq\sqrt{V_{m-1}}+2C_2
 \ll\sqrt{S_{m-1}}+1
 =O(\sqrt m)=o(b_m).
\end{equation}

At level $m-1$, write
$A=a_{m-1}(Y)$ and $E=e_{m-1}^{(m)}=c_{m-1}(N_{B_m})$.  These are independent uniform
digits in $\{0,\ldots,b_m-1\}$.  The finite time tail is
$U_m(e^{(m)})=0$, whereas the endpoint tail $U_m(a(Y))$ is uniform and
hence positive almost surely.  The local normal form consequently gives
\begin{equation}\label{eq:v71-auto-0106}
 \frac{\xi_{m-1}^{(m)}}{b_m}
 =\min\left(\frac A{b_m},\frac E{b_m}\right)
  -\frac A{b_m}\frac E{b_m}
  +\frac{\1_{\{A<E\}}}{b_m}-\frac{E\,U_m(a(Y))}{b_m^2}.
\end{equation}
The last two terms have absolute value at most $2/b_m$. 
Take independent random variables \(R,S\), each uniformly distributed on
\([0,1)\), and, for every spike index \(m=n_k\), define
\[
 A_m=\lfloor b_mR\rfloor,
 \qquad
 E_m=\lfloor b_mS\rfloor.
\]
Then \(A_m\) and \(E_m\) are independent and uniform on
\(\{0,\ldots,b_m-1\}\).  Moreover,
\[
 \left|\frac{A_m}{b_m}-R\right|<\frac1{b_m},
 \qquad
 \left|\frac{E_m}{b_m}-S\right|<\frac1{b_m}.
\]
Since \(b_m=b_{n_k}=n_k\to\infty\) along the spike subsequence,
\(A_m/b_m\to R\) and \(E_m/b_m\to S\) almost surely.
It follows, for every fixed $p>0$, that
\begin{equation}\label{eq:dominant-Lp}
 \frac{\xi_{m-1}^{(m)}}{b_m}\longrightarrow
 L:=\min(R,S)-RS
 \qquad\text{in }L^p.
\end{equation}

We record the first two moments of $L$.  By symmetry across the diagonal of the
unit square,
\begin{align}
 \E[L]
 &=2\int_0^1\int_u^1u(1-v)\,dv\,du
 =\int_0^1u(1-u)^2\,du=\frac1{12},\label{eq:v71-auto-0240}\\
 \E[L^2]
 &=2\int_0^1\int_u^1u^2(1-v)^2\,dv\,du
 =\frac23\int_0^1u^2(1-u)^3\,du=\frac1{90}.\label{eq:v71-auto-0241}
\end{align}
Thus $\Var(L)=1/90-1/144=1/240>0$.  Taking $p=2$ in
\eqref{eq:dominant-Lp} and using \eqref{eq:dominant-lower-levels} gives
\begin{equation}\label{eq:dominant-centered-L2}
 \frac{\cD_{B_m}-\E[\cD_{B_m}]}{b_m}
 \longrightarrow L-\frac1{12}
 \qquad\text{in }L^2
\end{equation}
along $m=n_k$.  In particular,
$\Var(\cD_{B_m})/b_m^2\to1/240$.  Consequently,
\begin{equation}\label{eq:v71-auto-0107}
 \frac{\cD_{B_m}-\E[\cD_{B_m}]}
      {\sqrt{\Var(\cD_{B_m})}}
 \Longrightarrow\frac{L-1/12}{\sqrt{1/240}},
\end{equation}
which is \eqref{eq:dominant-limit}.  Since this limit is bounded and has
positive variance, it cannot have a Gaussian distribution.

We finally compare directly with the centering and normalization of
Levin--Merzbach.  In the coupling above, the finite-clock and Haar-clock
contributions at level $m-1$ differ by at most one.  Together with
\eqref{eq:dominant-low-level-coupling}, this gives
\begin{equation}\label{eq:dominant-Haar-comparison}
 \|\cD_{B_m}-\mathcal P_m\|_2\leq C_2+1.
\end{equation}
Since $\E[\mathcal P_m]=M_m$ and
$\Var(\mathcal P_m)=V_m$, it follows that
\begin{equation}\label{eq:dominant-proxy-comparison}
 \E[\cD_{B_m}]=M_m+O(1),\qquad
 \left|\sqrt{\Var(\cD_{B_m})}-\sqrt{V_m}\right|=O(1).
\end{equation}

Since \(\sqrt{V_m}\to\infty\), we immediately get from 
\eqref{eq:dominant-proxy-comparison} that
\begin{equation}\label{from-exact-E_v-to-proxies}
 \frac{\sqrt{V_m}}{\sqrt{\Var(\cD_{B_m})}}\longrightarrow1,
 \qquad
 \frac{\E[\cD_{B_m}]-M_m}
      {\sqrt{\Var(\cD_{B_m})}}\longrightarrow0.
\end{equation}
Consequently, the preceding proxy-normalized convergence and Slutsky's
theorem, applied along \(m=n_k\), yield \eqref{eq:dominant-limit}.

\end{proof}

\section{Future work}\label{sec:future-work}

Several extensions of our results are postponed to future work.\newline  

The first one is a multivariate temporal theory with a common random
clock.  Fix \(q\geq2\), and let $Z,Y_1,\ldots,Y_q$
be independent random variables, each uniformly distributed on \([0,1)\).
Set \(N_H=\lfloor ZH\rfloor\) and
\begin{equation}\label{joint-intervals}
 X_{H,j}
 :=D_{N_H}(Y_j)
 =\sum_{n=0}^{N_H-1}
   \bigl(\mathbf 1_{[0,Y_j)}(T^n0)-Y_j\bigr),
 \qquad 1\leq j\leq q,
\end{equation}
and write
 $\mathbf X_H=(X_{H,1},\ldots,X_{H,q})$.
Conditional on \(N_H\), the coordinates of \(\mathbf X_H\) are independent
and identically distributed.  Consequently, setting 
 $\alpha_H
 :=\E\!\left[\Var(X_{H,1}\mid N_H)\right]$  and $\beta_H
 :=\Var\!\left(\E[X_{H,1}\mid N_H]\right)$,
then the covariance matrix has the 
form
\begin{equation}\label{covariance-matrix-joint-intervals}
 \Sigma_H:=\Cov(\mathbf X_H)
 =\alpha_H I_q+\beta_H\mathbf 1_q\mathbf 1_q^{\mathsf T}.
\end{equation}
Thus its diagonal entries are \(\alpha_H+\beta_H\), while every
off-diagonal entry is \(\beta_H\).  The goal would  be to  seek explicit
conditions implying a joint
temporal central limit theorem.  For example, assuming 
 $\rho_H:=\frac{\beta_H}{\alpha_H+\beta_H}\longrightarrow\rho$,
the expected coordinatewise-normalized limit is
\begin{equation}
 \frac{\mathbf X_H-\E[\mathbf X_H]}
      {\sqrt{\alpha_H+\beta_H}}
 \Longrightarrow
 \mathcal N_q\!\left(
   0,(1-\rho)I_q+\rho\mathbf 1_q\mathbf 1_q^{\mathsf T}
 \right).
\end{equation}
This vector formulation also leads naturally to functionals of the entire
endpoint-discrepancy process.  For instance,
\begin{equation}\label{endpoint-discrepancy-process}
 \frac1q\sum_{j=1}^qD_{N_H}(Y_j)
 \longrightarrow
 \int_0^1D_{N_H}(y)\,dy
\end{equation}
as \(q\to\infty\), conditionally on \(N_H\).
Similarly,
\begin{equation}\label{averaged-Lp-discrepancy}
 \frac1q\sum_{j=1}^q|D_{N_H}(Y_j)|^p
 \longrightarrow
 \int_0^1|D_{N_H}(y)|^p\,dy,
\end{equation}
providing a route to temporal limit theorems for averaged
\(L^p\)-discrepancy.\newline


Levin \cite{LevinMulti} has already proved a temporal central limit
theorem for a synchronized product of fixed-base odometers with distinct
prime bases, tested against a fixed multidimensional box.  
A different problem, closer to the present one-dimensional setting, is to
study the vector
\begin{equation}\label{two-clocks}
 \left(
  D^{(1)}_{\lfloor Z_1H_1\rfloor}(Y),
  D^{(2)}_{\lfloor Z_2H_2\rfloor}(Y)
 \right),
\end{equation}
where the two odometers are tested against the same random interval
\([0,Y)\).  When $Z_1$ and $Z_2$ are independent, the dependence is
created entirely by the shared endpoint, while when $Z_1=Z_2$, there is an
additional common-clock contribution.  The remaining questions are to
determine the cross-covariance, prove a joint temporal central limit
theorem, and extend the result from fixed pairwise coprime bases to
genuinely mixed-radix systems.\newline

A third question for future work concerns a  quenched temporal CLT for almost every fixed
endpoint $y$.

\section*{Acknowledgements}
The second author gratefully acknowledges the support of the Natural Sciences and Engineering Research Council of Canada  through the NSERC Discovery Grants  RGPIN-2022-04330. 

During the preparation of this work, the authors used ChatGPT (OpenAI, model GPT-5.6 Sol) to assist with literature and reference searches and with editing and organizing the manuscript, including revisions to LaTeX source originally written by the authors.
The core ideas underlying the work were developed in 2017 during the first author’s postdoctoral appointment at Queen’s University, well before general-purpose generative AI tools became available.
All AI-generated suggestions and proposed references were critically reviewed, and all mathematical content was independently verified by the authors, who take full responsibility for all results, proofs, and conclusions.

\end{document}